\theoremstyle{definition}
\newtheorem{defdef}{Definition}[section]
\newtheorem{defi}{Definition}[section]
\theoremstyle{plain}
\newtheorem{thm}[defi]{Theorem}
\newtheorem{lemma}[defi]{Lemma}
\newtheorem{prop}[defi]{Proposition}
\newtheorem{coro}[defi]{Corollary}
\newtheorem{rem}[defi]{Remark}
\numberwithin{equation}{section}
\newcommand{\C}{\mathbb{C}}
\newcommand{\R}{\mathbb{R}}
\newcommand{\N}{\mathbb{N}}
\newcommand{\E}{\mathbb{E}} 
\renewcommand{\S}{\mathbb{S}}
\renewcommand{\H}{\mathbb{H}} 
\renewcommand{\P}{\mathbb{P}}
\def\A{\bm{\mathrm{A}}}
\def\vac{\vert 0\rangle}
\renewcommand{\c}{\mathbf{c}}
\newcommand{\g}{\mathbf{g}}
\renewcommand{\k}{\mathbf{k}}
\newcommand{\bphi}{\bm{\phi}}
\newcommand{\X}{\mathbf{X}}
\newcommand{\Z}{\mathbf{Z}}
\newcommand{\V}{\bm{V}}
\newcommand{\mc}[1]{\mathcal{#1}}
\newcommand{\MW}[1]{\prescript{#1}{}{\mc M_+}}
\renewcommand{\d}{\text{\rm d}}
\newcommand{\ps}[1]{\langle #1 \rangle}
\newcommand{\ind}{\mathds{1}}
\newcommand{\Id}{\text{\rm{Id}}}
\def\Wb{\bm{\mathrm W}}
\def\Db{\bm{\mathrm D}}
\renewcommand{\Re}{\text{\rm{Re}}}
\renewcommand{\Im}{\text{\rm{Im}}}
\newcommand{\neu}{\partial_{n_g}}
\renewcommand{\bar}[1]{\overline{#1}}
\newcommand{\eps}{\varepsilon}
\renewcommand{\g}{\mathfrak g}
\renewcommand{\a}{\mathfrak a}
\renewcommand{\sl}{\mathfrak{sl}}
\newcommand{\norm}[1]{\left\lvert#1\right\rvert}
\newcommand{\expect}[1]{\mathbb{E}\left[#1\right]}
\newcommand{\eqlaw}{\overset{\text{(law)}}{=}}
\newcommand{\qt}[1]{\quad\text{#1}\quad}
\NewDocumentEnvironment{eqs}{+b}
{\begin{equation}\begin{split}#1\end{split}\end{equation}}
{}
\def\adots{\mathinner{\mkern2mu\raise 1pt\hbox{.}\mkern 3mu\raise 
		4pt\hbox{.}\mkern1mu\raise 7pt\hbox{{.}}}}
\title[Boundary Toda CFT]{Boundary Toda Conformal Field Theory from the path integral}
\author[B. Cercl\'e]{Baptiste Cercl\'e}
\email{baptiste.cercle@epfl.ch}
\address{EPFL SB MATH RGM, MA B2 397, Station 8, CH-1015 Lausanne, Switzerland.}
\author[N. Huguenin]{Nathan Huguenin}
\email{nathan.huguenin@univ-amu.fr}
\address{Aix-Marseille Université, CNRS, Institut de Mathématiques de Marseille (I2M) – UMR 7373, Site de Saint Charles, 3 place Victor Hugo, Case 19, 13331 Marseille cédex 3, France.}
\begin{document}
	\maketitle
	
	\begin{abstract}
		Toda Conformal Field Theories (CFTs hereafter) are generalizations of Liouville CFT where the underlying field is no longer scalar but takes values in a finite-dimensional vector space induced by a complex simple Lie algebra. The goal of this document is to provide a probabilistic construction of such models on all compact hyperbolic Riemann surfaces with or without boundary. To do so, we rely on a probabilistic framework based on Gaussian Free Fields and Gaussian Multiplicative Chaos.
		
		In the presence of a boundary, one major difference with Liouville CFT is the existence of non-trivial outer automorphisms of the underlying Lie algebra. The main novelty of our construction is to associate to such an outer automorphism a type of boundary conditions for the field of the theory, leading to the definition of two different classes of models, with either Neumann or Cardy boundary conditions.  This in turn has implications on the algebra of symmetry of the models, which are given by $W$-algebras.
	\end{abstract}
	
	
	\section{Introduction}\label{sec:intro}
	\subsection{Toda Conformal Field Theories}
	
	\subsubsection{Liouville theory on closed surfaces}
	The probabilistic framework introduced by David-Guillarmou-Kupiainen-Rhodes-Vargas~\cite{DKRV16, DRV16, GRV19} to provide a mathematical study of Liouville Conformal Field Theory (CFT hereafter) has led to major breakthroughs in the understanding of this model of two-dimensional random geometry. Among other achievements are the probabilistic derivation of the celebrated DOZZ formula~\cite{DO94, ZZ96} in~\cite{KRV_DOZZ} and a mathematically rigorous implementation of the conformal bootstrap procedure~\cite{GKRV} as well as Segal's axioms~\cite{Seg04, GKRV_Segal}.  In this respect the probabilistic framework has proved to be key in the mathematical formulation and derivation of predictions from the physics literature.
	
	Indeed, before becoming a mathematical topic of research Liouville CFT was initially introduced in the physics literature in the groundbreaking work of Polyakov~\cite{Pol81} where a notion of two-dimensional random geometry, natural in the setting of string theory and quantum gravity, was designed. It has then become a key object of study in the understanding of two-dimensional CFT following the introduction of the \textit{conformal bootstrap method} by Belavin-Polyakov-Zamolodchikov~\cite{BPZ} (BPZ hereafter) since (among other things) it provides a canonical model in which this procedure can be implemented. In the language of physics Liouville CFT is defined using a path integral, which means in the present context that one defines a measure on a functional space $\mc F\subset L^2(\Sigma,g)$ with $(\Sigma,g)$ a smooth, compact Riemannian surface. This measure is formally defined so that the average of a functional $F$ over $\mc F$ takes the form 
	\begin{equation}\label{eq:path_integral}
		\langle F(\Phi) \rangle_{g} \coloneqq \frac1{\mc Z}\int_{\bphi\in\mc F} F( \bphi)e^{-S_L(\bphi,g)}D \bphi
	\end{equation}
	where $\mathcal{Z}$ is a normalization factor and $D \bphi$ stands for a \lq\lq uniform measure" over $\mc F$, while $S_L:\mc F\to\R$ is the so-called \textit{Liouville action}. When $\Sigma$ has no boundary the latter is
	\begin{equation}\label{eq:Toda_action}
		S_{L}(\bphi,g)\coloneqq  \frac{1}{4\pi} \int_{\Sigma}  \Big (  \norm{d_g\bphi}^2   +Q R_g\bphi +4\pi \mu e^{\gamma    \bphi}   \Big)\,{\rm d}v_{g}
	\end{equation}
	where $R_g$ is the scalar curvature, $d_g$ the gradient and $v_g$ the volume form, all taken in the metric $g$. The action also features a coupling constant $\gamma\in(0,2)$ and the background charge $Q\coloneqq \frac\gamma2+\frac2\gamma$, as well as a cosmological constant $\mu\geq0$. Based on this heuristic, the correlation functions of Liouville CFT are given by considering special observables $F$ in Equation~\eqref{eq:path_integral}, given by products of \textit{Vertex Operators} $V_{\alpha}(z)=e^{\alpha\Phi(z)}$ where $\alpha\in\R$ is a \textit{weight} and $z\in\Sigma$ an \textit{insertion}. Hence, the correlation functions are the quantities formally defined by
	\begin{equation}\label{eq:formal_correl}
		\langle \prod_{k=1}^NV_{\alpha_k}(z_k)\rangle_{g} \coloneqq \frac1{\mc Z}\int_{\bphi\in\mc F} \prod_{k=1}^Ne^{\alpha_k\bphi(z_k)}e^{-S_L( \bphi)}D \bphi.
	\end{equation}
	
	However this approach used in the physics does not really make sense mathematically speaking; in this perspective one of the key achievements of David-Kupiainen-Rhodes-Vargas~\cite{DKRV16} has been to provide a rigorous probabilistic framework to give a meaning to the above, hence translating this heuristic into a well-defined object of study.
	
	\subsubsection{Toda CFTs on closed Riemann surfaces}
	The conformal bootstrap method envisioned by BPZ~\cite{BPZ} has been successfully implemented in the physics literature, among other examples, for a large family of models of statistical physics at criticality: this is the case for instance for \textit{minimal models} such as the planar Ising model at criticality. However, there is a whole class of CFTs enjoying in addition to conformal symmetry an enhanced level of symmetry: the critical three-states Potts model is a prototypical example of such a phenomenon in that it admits a \textit{higher-spin symmetry}. In order to provide a suitable generalization of the conformal bootstrap procedure to the setting of models enjoying such a higher level of symmetry, Zamolodchikov introduced in 1985~\cite{Za85} the notion of $W$-algebra, which allowed him to extend the method of BPZ to models featuring \textit{higher-spin} or \textit{W}-symmetries.
	
	In this perspective the algebra of symmetry of Liouville CFT, the Virasoro algebra, corresponds to the case where the $W$-algebra is associated to the simple and complex Lie algebra $\mathfrak{sl}_2$. Toda CFTs allow one to extend this construction by generalizing Liouville theory to a family of CFTs indexed by simple and complex Lie algebras $\mathfrak{g}$ and for which the algebras of symmetry are given by simple $W$-algebras. Despite playing a role similar to Liouville CFT in this generalized setting and therefore having being studied in depth in the physics literature, due to their complexity many features of Toda CFTs remain to be understood.
	
	In contrast with Liouville theory, the field in Toda CFTs has values in a $r$-dimensional vector space, where $r$ stands for the rank of the underlying Lie algebra. The problem then becomes the one of defining a measure $\mu_T$ on a functional space $\mc F\subset L^2(\Sigma\to\a,g)$ where $\a\simeq\R^r$. This measure on $\mc F$ is formally defined by $\mu_T(D\bphi) \coloneqq e^{-S_T(\bphi,g)} D\bphi$
	where $D\phi$ stands for the hypothetical uniform measure on this space of functions. Put differently we wish to define for $F$ a suitable functional over $\mc F$
	\begin{equation} \label{eq:toda path integral}
		\ps{F(\Phi)}_{g} \coloneqq \frac1{\mathcal{Z}} \int_{\bphi\in\mc F} F(\bphi) e^{-S_T(\bphi,g)} D\bphi.
	\end{equation}
	The Toda action is given, when $\Sigma$ has no boundary, by the following Lagrangian:
	\begin{equation} \label{eq:toda_action}
		\begin{split}
			S_T(\bphi,g) = \frac{1}{4\pi} \int_\Sigma \left(|d_g \bphi|_g^2 + R_g \ps{Q,\bphi} + \sum_{i=1}^{r} 4\pi\mu_i e^{\ps{\gamma e_i,\bphi}} \right) \d v_g,\qt{where}
		\end{split}
	\end{equation}
	\begin{itemize}
		\item $\gamma \in (0,\sqrt{2})$\footnote{This range of values for the coupling constant differs from the usual one in that the simple roots are normalized so that the longest ones have norm $\sqrt2$. To recover the standard convention one therefore needs to rescale $\gamma$ by $\sqrt2$.} is the coupling constant and $Q=\gamma\rho+\frac2\gamma\rho^\vee\in\a$ (see Subsection~\ref{subsec:Lie}) is the background charge;
		\item the $\mu_i$, $1\leq i\leq r$, are the cosmological constants;
		\item the $(e_i)_{1\leq i\leq r}$ are the \textit{simple roots} of $\g$; they form a basis of $\a$. Likewise $\ps{\cdot,\cdot}$ is a scalar product inherited from the Lie algebra structure.
	\end{itemize}
	
	A probabilistic definition for quantities given by Equation~\eqref{eq:toda path integral} has been proposed in~\cite{CRV21} when the underlying Riemann surface is the two-dimensional sphere $\S^2$. To do so, the authors generalized the framework designed in~\cite{DKRV16} in the context of Toda CFTs. This in particular allowed to recover predictions made in the physics literature concerning the symmetries of such models in~\cite{Toda_OPEWV}, a key ingredient in the derivation~\cite{Toda_correl1, Toda_correl2} of the Fateev-Litvinov formula~\cite{FaLi1} for a family of structure constants of the theory. In this document we extend the construction proposed there by making sense of the path integral~\eqref{eq:toda path integral} in several different cases, and first for hyperbolic Riemann surfaces without boundary. 
	
	\subsubsection{Boundary Toda CFT}
	But the main contribution of the present document is to initiate the mathematical study of Toda CFTs on surfaces with a non-empty boundary, which makes apparent some key notions in the study of CFT in the presence of a boundary. Explicitly to any outer automorphism of the underlying Lie algebra we associate a type of boundary conditions for the field of the theory, giving rise to a probabilistic model that features some invariance property with respect to this automorphism. This invariance manifests itself at the level of the algebra of symmetry. Let us explain this in more details.
	
	Boundary Toda CFTs are defined using a path integral in the same fashion as in~\eqref{eq:toda path integral}, where in the presence of a boundary the action functional needs to be modified as follows:
	\begin{equation} \label{eq:toda_action_boundary}
		\begin{split}
			S_T(\bphi,g) = \frac{1}{4\pi} \int_\Sigma \left(|d_g \bphi|_g^2 + R_g \ps{Q,\bphi} + \sum_{i=1}^{r} 4\pi \mu_{B,i} e^{\ps{\gamma e_i,\bphi}} \right) \d v_g \\ + \frac{1}{2\pi} \int_{\partial\Sigma} \left( k_g \ps{Q,\bphi} + \sum_{i=1}^{r} 2\pi\mu_i e^{\ps{\frac{\gamma e_i}{2},\bphi}}  \right)\d \lambda_g
		\end{split}
	\end{equation}
	where $\lambda_g$ is the line element on $\partial\Sigma$ and $k_g$ the geodesic curvature, while the $\mu_i\in\C$, $i=1,\cdots,r$ are boundary cosmological constants. In principle one could directly extend the construction from the closed case to the open one; however, in the presence of a boundary one needs to prescribe boundary conditions for the field $\Phi$ of the theory. This is a key feature of Toda CFTs compared to the Liouville one, for which the target Lie algebra ($\sl_2$) does not admit any non-trivial outer automorphism. Let us explain this in the simplest case where the underlying surface is the upper-half plane $\H$ equipped with the metric $g=\frac{\norm{dz}^2}{(1+\norm{z}^
		2)^2}$.
	
	Based on the so-called \textit{doubling trick}, we may want to realize the Toda field $\Phi:\H\to\a$ using a field $\hat\Phi:\C\to\a$ that lives over the \textit{full} complex plane. To do so we can set
	\begin{equation}\label{eq:Phi_intro}
		\Phi(z)\coloneqq \frac{\hat\Phi(z)+\tau \left(\hat\Phi(\bar z)\right)}{\sqrt 2}
	\end{equation}
	where $\tau$ is a linear involution of $\a$. Then if $\tau=\Id$, $\Phi$ has Neumann boundary conditions, while if $\tau=-\Id$, it has Dirichlet ones. More generally $\Phi$ can be decomposed as a sum of two fields with respectively Neumann and Dirichlet boundary conditions. Now any outer automorphism $\tau$ of $\g$ which has order $1$ or $2$ (which is always the case except when $\g=D_4$) satisfies this assumption. As a consequence, with such a $\tau$ is naturally associated a type of boundary conditions for the field of the theory, thanks to this \textit{doubling trick}. 
	
	In view of the above discussion we propose the following probabilistic definition for boundary Toda CFTs. To start with fix $\tau$ and write the orthogonal decomposition $\a=\a_N\oplus\a_D$ with $\a_N$ the maximal subspace of $\a$ such that $\tau_{\vert\a_N}=\Id$. Then, for $g$ a uniform metric of type 1 (see Subsection~\ref{subsec:uniform}), we let $\X^{g,C}=\X^{g,N}+\X^{g,D}$ where $\X^{g,N}:\Sigma\to \a_{N}$ is a Gaussian Free Field (GFF) with Neumann boundary conditions and $\X^{g,D}:\Sigma\to \a_{D}$ is a Dirichlet one.  The correlation functions of the model, which are functions of insertions $z_1,\cdots,z_N$ in $\Sigma$ and $s_1,\cdots,s_l$ on $\partial\Sigma$ as well as weights $\alpha_1,\cdots,\alpha_N,\beta_1,\cdots,\beta_M$ in $\a$, are then defined by a limiting procedure:
	\begin{equation}\label{eq:def_correl_intro}
		\begin{split}
			&\ps{F(\Phi)\prod_{k=1}^NV_{\alpha_k}(z_k)\prod_{l=1}^MV_{\beta_l}(s_l)}_{g,\tau}\coloneqq \lim\limits_{\eps\to0}\frac1{\mc Z}\int_{\a_N}e^{-\ps{Q\chi(\Sigma),\c}}   \E\Big[F\left(\X^{g,C}+\c\right)\prod_{k=1}^N\eps^{\frac{\norm{\alpha_k}^2}{2}}e^{\ps{\alpha_k,\X^{g,C}_\eps(z_k)+\c}}\\
			&\times\prod_{l=1}^M\eps^{\frac{\norm{\beta_l}^2}{4}}e^{\ps{\frac12\beta_l,\X^{g,C}_\eps(s_l)+\c}}\exp\Big(-\sum_{i=1}^r\mu_{B,i}\int_{\Sigma}e^{\ps{\gamma e_i,\X^{g,C}+\c}}\d v_g-\sum_{j=1}^{d_N}\mu_j\int_{\partial\Sigma}e^{\ps{\frac\gamma2 f_j,\X^{g,C}+\c}}\d\lambda_g\Big)\Big]\d\c.
		\end{split}
	\end{equation}
	for $F$ continuous and bounded on a suitable functional space, where $e^{\ps{\gamma e_i,\X^{g,C}}}\d v_g$ is a Gaussian Multiplicative Chaos (GMC) measure, $d_N=\dim(\a_N)$ and the $(f_j)_{1\leq j\leq d_N}$ form a basis of $\a_N$ (see Subsection~\ref{subsec:fold}). More details on this construction are given in Sections~\ref{sec:closed} and~\ref{sec:bdry}: we extend there the construction to $g$ smooth. Let us mention that the mathematical study of boundary Liouville CFT was initiated in~\cite{HRV18, remy_annulus, Wu}, paving the way to the computation of several key correlation functions of the theory, starting from the derivation of the so-called Fyodorov-Bouchaud formula~\cite{remy1} to the computation of all structure constants in~\cite{ARSZ} building on BPZ-type differential equations~\cite{Ang_zipper}, recovering formulas predicted in the physics literature~\cite{FZZ, PT02, Hos}.

	\subsection{Statements of the main results and perspectives}
	Let $(\Sigma,g)$ be a smooth, connected, compact Riemannian surface with (possibly empty) boundary $\partial\Sigma$, and assume that $\chi(\Sigma)<0$. We also fix $\g$ a complex simple Lie algebra as well as $\tau$ an automorphism of the associated root system $\mc R$, with $\tau=\Id$ if $\partial\Sigma=\emptyset$. We further consider $z_1,\cdots,z_N$ in $\Sigma$ and $s_1,\cdots,s_l$ on $\partial\Sigma$, all distinct, and $\alpha_1,\cdots,\alpha_N,\beta_1,\cdots,\beta_M$ in $\a$.
	
	\subsubsection{Existence of the correlation functions}
	Our first result is a necessary and sufficient condition, the \textit{Seiberg bounds}, ensuring that the correlation functions defined by Equation~\eqref{eq:def_correl_intro}, are well-defined and non-trivial (\textit{i.e.} are finite and non-zero):
	\begin{thm}\label{thm:existence}
		For $F$ bounded and continuous over $H^{-1}(\Sigma\to\a,g)$, the correlation function $\ps{F(\Phi)\prod_{k=1}^NV_{\alpha_k}(z_k)\prod_{l=1}^MV_{\beta_l}(s_l)}_{g,\tau}$ exists and is non-trivial if and only if for $i=1,\cdots, r$:
		\begin{eqs} \label{eq:Seiberg}
			&\ps{\sum_{k=1}^N \alpha_k + \sum_{l=1}^M \frac{\beta_l}{2} - Q\chi(\Sigma),\omega_i^\vee} > 0;\\
			&\ps{\alpha_k - Q,e_i} < 0 \quad\text{ for all } k=1,...,N\quad\text{and}\quad\ps{\beta_l - Q,e_i} < 0 \quad\text{for all }l=1,...,M.
		\end{eqs}
	\end{thm}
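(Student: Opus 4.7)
The plan is to adapt the DKRV--CRV strategy to the boundary/folded setting. Three ingredients do the work: a Cameron--Martin shift that removes the vertex operators, integration of the Cartan zero mode $\c\in\a_N$, and verification of almost-sure finiteness of the modified GMC measures. The last two steps yield, respectively, the first and the last two Seiberg bounds; both sufficiency and necessity fall out of the same analysis.

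\textbf{Step 1 (Girsanov).} I apply Cameron--Martin to every vertex operator simultaneously. Each renormalized exponential $\eps^{|\alpha_k|^2/2}e^{\ps{\alpha_k,\X^{g,C}_\eps(z_k)}}$ and its boundary analogue $\eps^{|\beta_l|^2/4}e^{\ps{\beta_l/2,\X^{g,C}_\eps(s_l)}}$ (the exponent $1/4$ being forced by the doubling of the boundary two-point function of $\X^{g,C}$) is, in the limit $\eps\to0$, an exponential martingale whose density shifts $\X^{g,C}$ by the Cameron--Martin vector
\[
H(\cdot)=\sum_{k=1}^N \alpha_k\,G_g(z_k,\cdot) + \sum_{l=1}^M \tfrac{\beta_l}{2}\,G_g^{\partial}(s_l,\cdot),
\]
where $G_g^\partial$ carries an extra factor of $2$ coming from the method of images. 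After the shift, the vertex operators are absorbed into a finite deterministic prefactor (products of Green's-function values at distinct insertions) together with a multiplicative singular density in each cosmological GMC measure.

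\textbf{Step 2 (zero-mode integration).} The $\c$-dependent part of the integrand becomes
\[
\exp\bigl(-\ps{S,\c}\bigr)\exp\!\Big(-\sum_{i=1}^r \mu_{B,i}e^{\ps{\gamma e_i,\c}}A_i - \sum_{j=1}^{d_N}\mu_j e^{\ps{\frac{\gamma f_j}{2},\c}}B_j\Big)
\]
with $S\coloneqq Q\chi(\Sigma)-\sum_k\alpha_k-\sum_l\beta_l/2$ and $A_i,B_j>0$ a.s.\ finite (this finiteness being precisely Step~3). Writing $\c$ in the basis of $\a_N$ dual to the (folded) simple roots and changing variables to $u_i=e^{\ps{\gamma e_i,\c}}$, each coordinate integral is of Gamma type; the product converges iff $\ps{-S,\omega_i^\vee}>0$ for every $i=1,\dots,r$, which is the first Seiberg bound. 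If the inequality fails in some direction, the $\c$-integral either diverges or vanishes, so the bound is necessary as well.

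\textbf{Step 3 (GMC near insertions).} The shift $H$ multiplies each bulk GMC $e^{\ps{\gamma e_i,\X^{g,C}+\c}}\d v_g$ by $\prod_k|z-z_k|^{-\gamma\ps{\alpha_k,e_i}}\prod_l|z-s_l|^{-\gamma\ps{\beta_l,e_i}}$ (times a smooth factor), and analogously for the boundary chaos measures. By the standard GMC integrability criterion, the resulting measure is a.s.\ finite iff at every insertion the singular exponent lies strictly below the multifractal threshold $2+\tfrac{\gamma^2}{2}|e_i|^2$; using $Q=\gamma\rho+\tfrac{2}{\gamma}\rho^\vee$ together with $\ps{\rho,e_i}=|e_i|^2/2$ and $\ps{\rho^\vee,e_i}=1$ this threshold equals $\gamma\ps{Q,e_i}$, yielding exactly $\ps{\alpha_k-Q,e_i}<0$ for bulk insertions and (the factor $2$ in $G_g^\partial$ cancelling the $1/2$ in $\beta_l/2$) $\ps{\beta_l-Q,e_i}<0$ for boundary ones. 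Violation of any of these bounds causes a.s.\ divergence, giving necessity.

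\textbf{Main obstacle.} The delicate point is Step~2 in the Cardy case $\tau\neq\Id$: the zero mode $\c$ is confined to the proper subspace $\a_N\subsetneq\a$, so the bulk characters $\ps{\gamma e_i,\c}$ are no longer independent and coincide for simple roots sitting in the same $\tau$-orbit. One has to verify that the folded simple roots together with the projections of the fundamental coweights $\omega_i^\vee$ still give dual bases of $\a_N^*$ and $\a_N$, so that the Gamma-type change of variables is legitimate and the convergence region is exactly \eqref{eq:Seiberg}, with the $\tau$-related inequalities automatically equivalent. Once this linear-algebraic dictionary is in place, the rest of the argument is a direct generalization of the scalar and closed-surface proofs in \cite{DKRV16, HRV18, CRV21}.
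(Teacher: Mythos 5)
Your overall strategy --- Girsanov shift at the insertions, integration of the zero mode over $\a_N$, and local integrability of the shifted GMC measures --- is exactly the route the paper takes (itself following \cite{DKRV16, HRV18, CRV21}), and Steps 1 and 2 match the appendix essentially line by line, including the Gamma-type reduction giving the first Seiberg bound and the a.s.\ divergence of the chaos near an insertion giving necessity of the second.

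The genuine gap is in Step 3, in the Cardy case: the field $\X^{g,C}$ is \emph{not} a standard log-correlated field near $\partial\Sigma$, so "the standard GMC integrability criterion" cannot be invoked as stated. Its covariance is $\ps{u,v}G_{\hat g}(x,y)+\ps{u,p_Nv-p_Dv}G_{\hat g}(x,\sigma(y))$, and the reflected term is unbounded as $x,y$ approach the boundary together, with a coefficient $\ps{u,p_Nu-p_Du}$ that can be negative; in particular the kernel need not dominate, nor be dominated by, a single logarithm in any obvious pointwise sense. The paper supplies the missing ingredient: for positive moments it bounds $\E[\ps{e_i,\X^{g,C}_\eps(x)}\ps{e_i,\X^{g,C}_\eps(y)}]\le \E[\ps{e_i,\X^{g,N}_\eps(x)}\ps{e_i,\X^{g,N}_\eps(y)}]+C$ (using $\ps{e_i,p_De_i}\ge0$) and applies Kahane's convexity inequality to reduce to the pure Neumann chaos, for which the moment bounds of \cite{DKRV16} and \cite{HRV18} apply; for negative moments it restricts the integral away from $\partial\Sigma$. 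Without this comparison your Step 3 does not go through near boundary insertions. A secondary point: in your "main obstacle" paragraph, the claim that the $\tau$-related inequalities are "automatically equivalent" is unjustified --- since $\c$ ranges only over $\a_N$, the zero-mode integral tests only $p_N\bigl(\sum_k\alpha_k+\tfrac12\sum_l\beta_l-Q\chi(\Sigma)\bigr)$, i.e.\ the pairings with $p_N\omega_i^\vee$, and these coincide within a $\tau$-orbit only after averaging; the individual inequalities $\ps{\cdot\,,\omega_i^\vee}>0$ and $\ps{\cdot\,,\omega_{\tau(i)}^\vee}>0$ are equivalent only when $\sum_k\alpha_k\in\a_N$. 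This deserves an explicit argument rather than a dismissal.
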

	Here the $\omega_i^\vee$ form the basis of $\a$ dual to that of the simple roots $e_i$. These Seiberg bounds are the same as in the case of the sphere~\cite[Theorem 1.1]{CRV21}.
	
	\subsubsection{Conformal and diffeomorphism invariance}
	In addition to the existence of the correlation functions we would like to show that they satisfy basic assumptions related to conformal symmetry. However when $\tau\neq\Id$ this might no longer be the case since some of the $f_j$ no longer have same norm as the $e_i$: as such global conformal invariance is broken. To overcome this issue we need to assume that the GMC terms $e^{\ps{\frac{\gamma}2 f_j,\X^{g,C}}}\d \lambda_g$ do not appear in the path integral, which means taking $\mu_j=0$ for $j$ such that $\norm{f_j}^2\neq 2$. We refer to Subsection~\ref{subsec:different_models} for a more detailed discussion.
	Under this additional assumption we show a Weyl anomaly under a conformal change of metrics, and diffeomorphism invariance:
	\begin{thm} \label{thm:axioms}
		In the setting of Theorem~\ref{thm:existence}, assume that, for $\tau\neq\Id$, $\mu_j=0$ for $j$ such that $\norm{f_j}^2\neq 2$. Then for any $g'=e^\varphi g$ in the conformal class of $g$
		\begin{align} \label{eq:weyl}
			\ps{F(\Phi)\prod_{k=1}^NV_{\alpha_k}(z_k)\prod_{l=1}^MV_{\beta_l}(s_l)}_{g',\tau} = \ps{F(\Phi-\frac Q2\varphi)\prod_{k=1}^NV_{\alpha_k}(z_k)\prod_{l=1}^MV_{\beta_l}(s_l)}_{g,\tau}\times\\
			\prod_{k=1}^N e^{-\Delta_{\alpha_k}\varphi(x_k)}\prod_{l=1}^M e^{-\frac12\Delta_{\beta_l}\varphi(s_l)} \exp\left(\frac{r+6|Q|^2}{96\pi} \left(\int_{\Sigma} (|d_g\varphi|_g^2 + 2R_g\varphi) \d v_g + 4\int_{\partial\Sigma} k_g\varphi \d\lambda_g\right)\right). \nonumber
		\end{align}
		In the above the conformal weights are given by $\Delta_{\alpha}=\ps{\frac{\alpha}2,Q-\frac\alpha2}$.
		Besides, if $\psi : \Sigma \to \Sigma$ is an orientation-preserving diffeomorphism sending the boundary to itself:
		\begin{equation} \label{eq:diffeo invariance}
			\ps{F(\Phi)\prod_{k=1}^NV_{\alpha_k}(z_k)\prod_{l=1}^MV_{\beta_l}(s_l)}_{\psi^*g,\tau} = \ps{F(\Phi\circ\psi)\prod_{k=1}^NV_{\alpha_k}(\psi(z_k))\prod_{l=1}^MV_{\beta_l}(\psi(s_l))}_{g,\tau}.
		\end{equation}
	\end{thm}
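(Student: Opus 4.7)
The plan is to handle the Weyl anomaly first, as it contains the substantial analytic content; the diffeomorphism invariance then reduces to a change-of-variables argument since every ingredient of~\eqref{eq:def_correl_intro} is intrinsically associated with the pair $(\Sigma,g)$. For the Weyl anomaly I would decompose the effect of $g\mapsto g'=e^\varphi g$ on each factor of the path integral separately: the Gaussian field $\X^{g,C}$ and its regularisation $\X^{g,C}_\eps$, the renormalisation powers $\eps^{\norm{\alpha_k}^2/2}$ and $\eps^{\norm{\beta_l}^2/4}$, the bulk and boundary GMC measures, and the zero-mode factor $e^{-\ps{Q\chi(\Sigma),\c}}$. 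The hypothesis $\mu_j=0$ for $\norm{f_j}^2\neq 2$ enters precisely at the boundary GMC step: it is equivalent to requiring that the density $e^{\ps{\frac{\gamma f_j}{2},\X^{g,C}}}\d\lambda_g$ transforms with conformal weight one under a Weyl rescaling, so that the scaling of the line element compensates that of the exponential.

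The first technical input is the conformal covariance of the Green kernels of the Neumann and Dirichlet components of $\X^{g,C}$. Combining $\Delta_{g'}=e^{-\varphi}\Delta_g$, $R_{g'}=e^{-\varphi}(R_g-\Delta_g\varphi)$ and $k_{g'}=e^{-\varphi/2}(k_g+\tfrac12\neu\varphi)$ with the defining boundary value problem for each Green function yields, up to a global constant, $G_{g'}(x,y)=G_g(x,y)-\tfrac14\varphi(x)-\tfrac14\varphi(y)+c_{g,g'}$, which at the level of the fields translates into $\X^{g',C}\eqlaw \X^{g,C}-m_{g'}(\X^{g,C})$ after appropriate centring. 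Feeding this into the regularised vertices and using the diagonal asymptotics of $G_g$, so that the factor $\eps^{\norm{\alpha_k}^2/2}$ absorbs the change of variance, each bulk vertex $\eps^{\norm{\alpha_k}^2/2}e^{\ps{\alpha_k,\X^{g',C}_\eps(z_k)+\c}}$ factorises as $e^{-\Delta_{\alpha_k}\varphi(z_k)}$ times the same vertex built from the shifted field $\X^{g,C}-\tfrac{Q}{2}\varphi$; the analogous computation on $\partial\Sigma$ produces $e^{-\frac12\Delta_{\beta_l}\varphi(s_l)}$. For the bulk GMC measures, writing $Q=\gamma\rho+\tfrac{2}{\gamma}\rho^\vee$ and using $\ps{\rho,e_i}=\tfrac12\norm{e_i}^2$, $\ps{\rho^\vee,e_i}=1$, the shift-induced factor $e^{-\tfrac{\gamma}{2}\ps{Q,e_i}\varphi}$ exactly balances the density scaling $e^{\tfrac{\gamma^2\norm{e_i}^2}{4}\varphi}$ and the volume factor $e^\varphi$; on the boundary the same identity holds precisely when $\norm{f_j}^2=2$, which is the content of the hypothesis on $\mu_j$.

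After these substitutions the residual prefactor is the Weyl anomaly. It splits into two contributions: the Polyakov--Alvarez variation of the free-field partition function, producing $\tfrac{r}{96\pi}\bigl(\int_\Sigma(\norm{d_g\varphi}^2+2R_g\varphi)\d v_g+4\int_{\partial\Sigma}k_g\varphi\d\lambda_g\bigr)$, and the background-charge term $e^{-\ps{Q\chi(\Sigma),\c}}$ recombined with the shift $\bphi\mapsto\bphi-\tfrac{Q}{2}\varphi$, which via Gauss--Bonnet contributes $\tfrac{6\norm{Q}^2}{96\pi}$ times the same quantity. The diffeomorphism invariance~\eqref{eq:diffeo invariance} then follows by noting that $\X^{\psi^*g,C}\eqlaw \X^{g,C}\circ\psi$ as random distributions (a consequence of $G_{\psi^*g}(x,y)=G_g(\psi(x),\psi(y))$) and that the curvatures, volume and line elements pull back naturally under $\psi$. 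The main obstacle I expect is in the second step: the simultaneous passage to the limit $\eps\to0$ while changing the metric requires uniform short-distance control of $G_{g'}-G_g$ near the insertions together with the GMC integrability estimates used for Theorem~\ref{thm:existence}, in the spirit of the arguments of~\cite{DKRV16, HRV18, CRV21}.
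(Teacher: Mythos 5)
Your overall architecture --- reduce to the uniform type 1 metric, track the conformal covariance of the field, of the regularization, of the bulk and boundary GMC measures and of the zero mode, then assemble the anomaly from Polyakov--Alvarez plus a $Q$-dependent piece --- matches the paper's, and you correctly locate where the hypothesis $\mu_j=0$ for $\norm{f_j}^2\neq 2$ enters. But there is a genuine gap at the centre of the argument: your list of factors omits the part of the regularized potential that is \emph{linear in the Gaussian field}, namely $Y=-\frac{1}{4\pi}\int_\Sigma R_{g'}\ps{Q,\X^{g_0,C}}\,\d v_{g'}-\frac{1}{2\pi}\int_{\partial\Sigma}k_{g'}\ps{Q,\X^{g_0,C}}\,\d\lambda_{g'}$. (This term vanishes for the uniform metric but not for $g'=e^\varphi g_0$, so it is present on the left-hand side of the identity you are proving.) The paper handles $e^{Y}$ by the Girsanov/Cameron--Martin theorem: using $R_{g'}=e^{-\varphi}(R_{g_0}-\Delta_{g_0}\varphi)$ and $k_{g'}=e^{-\varphi/2}(k_{g_0}+\frac12\partial_{\vec n_{g_0}}\varphi)$ one finds $\E[Y\ps{u,\X^{g_0,C}}]=-\frac{\ps{Q,u}}{2}(\varphi-m_{g_0}(\varphi))$ and $\E[Y^2]=\frac{|Q|^2}{8\pi}\int_\Sigma|d_{g_0}\varphi|_{g_0}^2\,\d v_{g_0}$, and it is precisely this Girsanov shift that produces the replacement $\Phi\mapsto\Phi-\frac{Q}{2}\varphi$, the factors $e^{-\frac12\ps{\alpha_k,Q}\varphi(x_k)}$ that complete $e^{\frac{|\alpha_k|^2}{4}\varphi(x_k)}$ into $e^{-\Delta_{\alpha_k}\varphi(x_k)}$, and the Dirichlet-energy part $\frac{|Q|^2}{16\pi}\int|d_{g_0}\varphi|^2_{g_0}\,\d v_{g_0}$ of the anomaly.

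In your write-up the shift by $-\frac{Q}{2}\varphi$ is made to appear out of the conformal covariance of the Green kernels, but that covariance only yields $\X^{g',C}\eqlaw\X^{g_0,C}-m_{g'}(\X^{g_0,C})$, a shift by a random \emph{constant}, never by the function $-\frac{Q}{2}\varphi$; relatedly, your formula $G_{g'}(x,y)=G_{g_0}(x,y)-\frac14\varphi(x)-\frac14\varphi(y)+c_{g,g'}$ is not the correct transformation rule, which is $G_{g'}(x,y)=G_{g_0}(x,y)-m_{g'}(G_{g_0}(x,\cdot))-m_{g'}(G_{g_0}(\cdot,y))+m_{g'}(G_{g_0})$. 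Moreover, recombining $e^{-\ps{Q\chi(\Sigma),\c}}$ with the zero-mode translation $\c\to\c-\frac{Q}{2}m_{g_0}(\varphi)$ accounts, via Gauss--Bonnet, only for the curvature integrals $2\int_\Sigma R_{g_0}\varphi\,\d v_{g_0}$ and $4\int_{\partial\Sigma}k_{g_0}\varphi\,\d\lambda_{g_0}$ in the $6|Q|^2$ part of the anomaly, not for $\int_\Sigma|d_{g_0}\varphi|^2_{g_0}\,\d v_{g_0}$: that term has no source in your accounting, and it comes from $e^{\frac12\E[Y^2]}$. The repair is to insert the Girsanov step for $Y$ between the GMC covariance step and the zero-mode change of variables; with that in place the rest of your outline, including the reduction to $g_0$ via the cocycle identity for the anomaly functional and the diffeomorphism invariance via naturality of the Green functions and curvatures, goes through as in the paper.
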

	
	\begin{rem}\label{rem:central charge}
		The Weyl anomaly~\eqref{eq:weyl} allows one to identify the so-called central charge of the theory which is equal to $C = r + 6|Q|^2$, in agreement with the sphere case~\cite{CRV21}. 
	\end{rem}

    \subsubsection{Implications on the algebra of symmetry}
    The construction of the field of the theory has in turn some consequences on the symmetry algebra of the theory: these are $W$-algebras, vertex operator algebras containing the Virasoro algebra as a subalgebra. Indeed, by construction and in view of Equation~\eqref{eq:Phi_intro}, the underlying field of boundary Toda CFT satisfies $\Phi(\bar z)=\tau \Phi(z)$: this form of invariance actually lifts to the algebra of symmetry of our model. Namely we prove in Subsection~\ref{subsec:W_twist} (see Proposition~\ref{prop:W-alg} for a more precise statement) that:
    \begin{prop}\label{prop:W-alg_intro}
		Assume that $\g=\sl_n$. Then there exist currents $\Wb^{(s_i)}[\Psi]$ of spins $s_i=2,\cdots,n$, such that for generic values of $\gamma$:
		\begin{itemize}
			\item the $W$-algebra associated to $\g$ is generated using the modes of the $(\Wb^{(i)}[\Psi]])_{2\leq i\leq n}$;
			\item for $\tau\in\text{Out}(\mc R)$ non-trivial we have $\Wb^{(i)}[\tau\Psi]=\left(-1\right)^{i}\Wb^{(i)}[\Psi]$.
		\end{itemize}
	\end{prop}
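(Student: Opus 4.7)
The plan is to construct the currents $\Wb^{(i)}[\Psi]$ via the \emph{Miura transformation} and to exploit the explicit action of $\tau$ on the Cartan of $\sl_n$. Let $h_1,\ldots,h_n$ denote the weights of the defining representation of $\sl_n$, normalized so that $\sum_k h_k = 0$ and $h_k - h_{k+1} = e_k$. I would define the $\Wb^{(i)}[\Psi]$ for $i=2,\ldots,n$ as the Wick-renormalized coefficients in the expansion
\begin{equation*}
R_Q(\Psi,\partial) \;:=\; :\prod_{k=1}^n \bigl( \partial + \ps{h_k,\partial \Psi} \bigr): \;=\; \sum_{i=0}^n \Wb^{(i)}[\Psi]\, \partial^{n-i},
\end{equation*}
with $\Wb^{(0)} = 1$ and $\Wb^{(1)} = 0$. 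For the first bullet I would invoke the classical Fateev--Lukyanov construction, transported to the probabilistic framework of this paper: for generic $\gamma$ the modes of the $\Wb^{(i)}$ satisfy the defining OPEs of the quantum $W$-algebra $W(\sl_n)$ and generate it. In practice this amounts to verifying these OPEs via Wick's theorem and Gaussian integration by parts, in direct analogy with the sphere computations carried out in~\cite{Toda_OPEWV}; since the $\Wb^{(i)}$ are local in $\Psi$, the presence of a boundary plays no role at this stage.

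For the second bullet, the key input is that for $\sl_n$ with $n\geq 3$ the non-trivial outer automorphism acts on the Cartan as $-w_0$, where $w_0$ is the longest Weyl group element; equivalently, $\tau(h_k) = -h_{n+1-k}$. Using orthogonality of $\tau$ together with the reindexing $j = n+1-k$, one obtains the differential-operator identity
\begin{equation*}
R_Q(\tau\Psi,\partial) \;=\; :\prod_{j=n}^{1}\bigl(\partial - \ps{h_j,\partial\Psi}\bigr): \;=\; (-1)^n\, R_Q(\Psi,\partial)^{\dagger},
\end{equation*}
where ${}^{\dagger}$ denotes the formal adjoint $(\partial + f)^{\dagger} = -\partial + f$ extended to products by $(AB)^{\dagger} = B^{\dagger} A^{\dagger}$. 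Expanding $(-1)^n R_Q(\Psi)^{\dagger}$ in powers of $\partial$ via the Leibniz rule and matching the coefficient of $\partial^{n-i}$ with that of $R_Q(\tau\Psi)$ yields
\begin{equation*}
\Wb^{(i)}[\tau\Psi] \;=\; (-1)^i\, \Wb^{(i)}[\Psi] \;+\; \sum_{0\leq k<i} (-1)^k \binom{n-k}{i-k}\, \partial^{i-k}\, \Wb^{(k)}[\Psi].
\end{equation*}
The lower-order derivative terms are precisely the contributions absorbed by the standard normalization passing from the raw Miura coefficients to the primary $W$-generators; this redefinition preserves the parity in $i$, so one can arrange that $\Wb^{(i)}[\tau\Psi] = (-1)^i \Wb^{(i)}[\Psi]$ holds exactly for the normalized currents.

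The main obstacle I anticipate is rigorously handling the renormalization. The formal-adjoint identity above is a priori a statement about polynomials in the smooth test variable $\partial \Psi$, and one must check that it survives Wick-ordering. This follows from the $\tau$-invariance of the covariance of the underlying GFF: indeed $\tau$ acts as an isometry of $\a$, and $\Psi$ decomposes orthogonally along its $\pm 1$-eigenspaces, so all Wick contractions in the normal-ordered Miura products commute with the substitution $\Psi \mapsto \tau\Psi$. The genericity assumption on $\gamma$ enters only in the first bullet, through the non-degeneracy of the Shapovalov form on the vacuum Verma module of $W(\sl_n)$ needed to identify the vertex algebra generated by the modes of the $\Wb^{(i)}$ with the full $W(\sl_n)$.
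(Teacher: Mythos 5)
Your construction rests on the same two inputs as the paper's own proof in the $\sl_n$ case: the Miura realization of the currents and the identity $\tau h_k=-h_{n+1-k}$ (the paper derives $\tau h_i=-h_{n+1-i}$ from $\tau e_k=e_{n-k}$ and uses the leading multilinear coefficient $P_0(\Psi)=\sum_{\norm{\mc P}=s_i}\prod_{j\in\mc P}\ps{h_j,\partial\Psi}$ of the Miura currents to determine the sign). Where you differ is the mechanism: the paper first runs an abstract induction on the grading of $\MW{\g}$ --- valid for any $\g\neq\mathfrak{so}_{4n}$ --- showing that after redefinition one always has $\Wb^{(s_i)}[\tau\Psi]=\pm\Wb^{(s_i)}[\Psi]$, and only afterwards invokes the Miura leading symbol together with a linear-independence argument to exclude the wrong sign; you instead read off the full defect from the formal adjoint of the Miura operator. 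Your route is more explicit but confined to $\sl_n$; the paper's buys the intermediate $\pm$ statement for essentially all $\g$.

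Two steps need repair. First, your claim that the lower-order terms $\sum_{k<i}(-1)^k\binom{n-k}{i-k}\partial^{i-k}\Wb^{(k)}[\Psi]$ are ``precisely the contributions absorbed by the standard normalization passing to primary generators'' is unsubstantiated, and there is no reason the primary-basis corrections should coincide with the adjoint defect. What actually rescues the conclusion is the involution: applying $\Psi\mapsto\tau\Psi$ twice to $\Wb^{(i)}[\tau\Psi]=(-1)^i\Wb^{(i)}[\Psi]+D_i[\Psi]$ forces $D_i[\tau\Psi]=-(-1)^iD_i[\Psi]$, so that $\widetilde{\Wb}^{(i)}\coloneqq\Wb^{(i)}+\tfrac{(-1)^i}{2}D_i$ satisfies the exact parity relation, and one concludes by induction on $i$; this consistency argument is, in essence, the paper's inductive step, and you should use it in place of the appeal to primariness. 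Second, the adjoint identity is delicate at the quantum level: you drop the background-charge term $q\partial$ from the Miura factors (without it the coefficients do not generate the quantum $W$-algebra at the correct central charge), and taking the formal adjoint reverses the order of the non-commuting normal-ordered factors --- an operation not justified by the $\tau$-invariance of the GFF covariance alone, which only shows that Wick contractions commute with $\Psi\mapsto\tau\Psi$. One needs the order-independence of the quantum Miura coefficients, usually obtained from their characterization as the joint kernel of the screening operators. With these two points addressed your argument goes through.
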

    We actually have a more general statement for any $\g$.
    The fact that boundary Toda CFTs satisfy such a property, which in view of Equation~\eqref{eq:Phi_intro} synthetically takes the form  $\overline{\Wb^{(i)}}=\left(-1\right)^{i}\Wb^{(i)}$, was postulated in the $\sl_3$ case and in the conformal bootstrap setting in~\cite{FaRi}. In this respect our definition generalizes this feature to general $\g$ and $\tau$ and provides a concrete, path integral and probabilistic, realization of this property.
    
	\subsubsection{Further perspectives}
	The present document provides a basis for future studies of boundary Toda CFTs and paves the way for a mathematical understanding of these theories. In particular, thanks to the framework introduced in this document, we expect to address many questions related to the symmetries of the model, its solvability and its connection with classical problems. 
	\begin{enumerate}
		\item \textbf{Symmetries}: In two recent papers~\cite{CH_sym1, CH_sym2} we prove that for $\g=\sl_3$, $\Sigma=\H$ and $\tau=\Id$ the correlation functions introduced in this document are solutions of Ward identities associated to the $W_3$-algebra, which was previously unclear in the physics literature. Likewise we show that the model gives rise to \textit{higher equations of motion}, a feature which is new even from a physics perspective. This comes from the fact that for boundary Toda CFT singular vectors are not necessarily null, which translates as non-irreducibility of the corresponding $W_3$-modules.
		We hope to address more general cases (other automorphisms, surfaces, Lie algebras) in future works. 
		
		\item \textbf{Solvability}: Based on the higher equations of motion and the BPZ-type differential equations unveiled in~\cite{CH_sym2}, we aim to compute exact expressions for a family of structure constants for the $\g=\sl_3$ boundary Toda CFT, for which no expression has been proposed at the time being. This would involve in particular probabilistic techniques e.g. to make sense of Operator Product Expansions, combined with more algebraic tools coming from representation theory of the $W_3$-algebra. Thanks to the probabilistic setting introduced in this document we have already obtained new formulas for the reflection coefficients of boundary Toda CFTs. 
		
		\item \textbf{Classical theory}: 
		In this document we have proposed different constructions for boundary Toda CFTs using a path integral based on the Toda action. The critical points of this Toda action are solutions of the \textit{Toda equations}, which describe geometric problems related to uniformization of Riemann surfaces (for $\g=\mathfrak{sl}_2$) and embeddings into symmetric spaces (for general $\g$), (cyclic) Higgs bundles and integrable systems. When taking the parameter $\gamma$ to zero it is expected (under suitable assumptions) that the Toda field will concentrate around these critical points: this is the \textit{semi-classical limit}.
		
		The framework considered here suggests that in the boundary case the choice of an automorphism $\tau\in\text{Out}(\mc R)$ will give rise to a solution of the Toda equation: to the best of our knowledge this feature is new compared to the existing literature on Toda systems. We hope to prove this fact via the semi-classical limit $\gamma\to0$ of the probabilistic correlation functions introduced in this document.
	\end{enumerate}
	
	\textbf{Acknowledgments} B.C. would like to thank the Aix-Marseille University where part of this work has been undertaken and acknowledges support from the Eccellenza grant 194648 of the Swiss National Science Foundation; he is a member of NCCR SwissMAP. N.H. is grateful to Rémi Rhodes for having introduced him to conformal field theory and for many enlightening discussions. Part of this document was (re)written during the trimester program \lq\lq Probabilistic methods in quantum field theory" organized at the Hausdorff Research Institute for Mathematics.
	
	\section{Background and notations}
	
	The purpose of this section is to provide the reader with the necessary tools to make sense of Toda conformal field theories. We recall some notions related to conformal geometry in dimension two as well as complex simple Lie algebras.
	
	\subsection{Compact Riemann surfaces, Laplacians and Green's functions}\label{subsec:uniform}
	
	\subsubsection{Uniform metrics}
	
	Throughout the paper $\Sigma$ will be a compact connected Riemann surface with (possibly empty) boundary $\partial\Sigma$, and we denote by $\g(\Sigma)$ its genus and by $\k(\Sigma)$ the number of connected components of $\partial\Sigma$. The Euler characteristic is then defined by $\chi(\Sigma) = 2(1-\g(\Sigma)) - \k(\Sigma) \le 2$. The surfaces of Euler characteristic $<0$ are called of hyperbolic type, and the only (up to biholomorphism) surfaces of Euler characteristic $\ge 0$ are the torus, the sphere, the disk and the cylinder. In the construction of Toda CFT we will consider hyperbolic surfaces, but the case of the disk (or equivalently the complex upper half-plane) is very similar and only requires minor modifications. 
	
	The Gauss-Bonnet theorem for a compact Riemann surface equipped with a smooth Riemannian metric $g$ states that
	\begin{equation} \label{eq:gauss-bonnet}
		\int_\Sigma R_g \d v_g + 2\int_{\partial\Sigma} k_g \d \lambda_g = 4\pi \chi(\Sigma)
	\end{equation}
	where $R_g$ is the Ricci scalar curvature (that equals twice the Gaussian curvature) and $k_g$ the geodesic curvature along the boundary. Two smooth Riemannian metrics $g$ and $g'$ are said to be conformally equivalent if there exists a smooth function $\varphi$ on $\Sigma$ such that $g' = e^\varphi g$. The set of all metrics conformally equivalent to a given metric $g$ is called the conformal class of $g$ and is denoted $[g]$. We recall here the following useful identities:
	\begin{equation} \label{eq:conf change curvature}
		R_{g'} = e^{-\varphi}(R_g - \Delta_g\varphi) \quad ; \quad k_{g'} = e^{-\varphi/2}(k_g + \frac12\partial_{\vec{n}_g} \varphi)
	\end{equation}
	where $\Delta_g$ is the Laplacian and $\partial_{\vec{n}_g}$ is the outward normal derivative (the Neumann operator), defined by $\partial_{\vec{n}_g} f = (d_gf,\vec{n}_g)_g$ where $\vec{n}_g$ is the outward normal vector along $\partial\Sigma$, $d_g$ the gradient and $(\cdot,\cdot)_g$ the scalar product in the metric $g$. For a surface without boundary, the uniformization theorem implies that in each conformal class there exists a unique (up to scaling and isometry) metric of constant Gaussian curvature called \textit{uniform metric}. For a surface with boundary, it is shown in~\cite{OPS88} that in each conformal class there exist:
	\begin{itemize}
		\item a unique uniform metric of type 1, that is with constant Gaussian curvature and $0$ geodesic curvature along the boundary;
		\item a unique uniform metric of type 2, which means that it is flat (i.e. $R_g=0$) and has constant geodesic curvature.
	\end{itemize}
	Again uniqueness has to be understood up to scaling and isometry.
	
	We now describe the doubling of a hyperbolic surface with boundary. Let $\Sigma$ be a hyperbolic Riemann surface. We equip $\Sigma$ with a uniform type 1 metric $g$. It follows from the Gauss-Bonnet theorem that the metric $g$ is itself hyperbolic, \textit{i.e.} $R_g$ is a negative constant (that one usually chooses equal to $-2$ but this is not relevant for the present discussion). It is readily seen that there exists a closed surface $\hat{\Sigma} = \Sigma \sqcup \partial \Sigma \sqcup \sigma(\Sigma)$ such that $\sigma$ is an involution that preserves the boundary: $\sigma \circ \sigma = \Id$ and $\sigma_{|\partial\Sigma} = \Id$. (Intuitively, one \lq\lq glues" $\Sigma$ with a copy of itself along the boundary). We naturally define the metric on $\sigma(\Sigma)$ as the pullback of $g$ by $\sigma$. Further, since the boundaries of $\Sigma$ and $\sigma(\Sigma)$ are geodesics of same length, the metric $g$ extends to a metric $\hat{g}$ on $\hat{\Sigma}$ given on $\sigma(\Sigma)$ by the pullback $\sigma^*g$. Since $g$ is smooth on $\Sigma$, the extended metric $\hat{g}$ is smooth away from $\partial\Sigma$. Near each point $x$ in $\partial\Sigma$, since the metric $g$ is hyperbolic, there is an isometry from the geodesic ball $B_{\hat{g}}(x,\eps)$ (with $\eps>0$ small enough) to the hyperbolic ball $B_{g_P}(i,\eps)$ in the complex upper half-plane $\H$, such that the $\sigma(\Sigma)$-part of the geodesic ball in $\hat{\Sigma}$ is sent to the left-hand side of the imaginary axis in $\H$ and the $\Sigma$-part of the ball is sent to the right of the imaginary axis. This provides local complex coordinates $z$ so that $\hat{g} = g_P = |dz|^2/\Im(z)^2$ near $x$, showing that $\hat{g}$ is smooth on $\hat{\Sigma}$. Note that for the sake of simplicity we will often omit the hat in the sequel, and denote by $g$ the induced smooth metric on the doubled surface.
	
	\subsubsection{Green's functions} \label{sec: green_functions}
	
	In this paper we will be interested in three different PDE problems on a compact Riemann surface equipped with a smooth Riemannian metric $g$. If the boundary of the surface is empty then we will be interested in what we call the closed Poisson problem, while if the boundary is non-empty we have to specify boundary conditions, that will be either of Neumann or Dirichlet type. In this section we define the Green's functions associated with these problems and state some useful properties. Recall that $\Delta_g$ denotes the (negative) Laplacian in the metric $g$.
	
	Suppose first that $\partial\Sigma = \emptyset$. The Green's function of the closed Poisson problem on $(\Sigma,g)$ is defined as the solution (see e.g.~\cite{GRV19} for more details) $G_g$ of the weak formulated problem\footnote{The factor $2\pi$ in the equation is cosmetic and added in order to lighten the notations in the sequel.}
	\begin{equation}\label{eq:closed_pb_green}
		\left\lbrace \begin{array}{ll}
			-\Delta_g G_g(\cdot,y) = 2\pi(\delta_y - \frac{1}{v_g(\Sigma)}) & \text{in } \Sigma \\
			m_g(G_g(\cdot,y)) = 0 & \text{for all } y \in \Sigma
		\end{array} \right.
	\end{equation}
	where $\delta$ is the Dirac delta function and $m_g(f)$ is the average of $f$ over $\Sigma$, with respect to the metric $g$: $m_g(f) := v_g(\Sigma)^{-1}\int_\Sigma f\d v_g$. As it is defined $G_g$ is \emph{a priori} a Schwartz distribution on $\Sigma\times\Sigma$, but we can see it as the integral kernel of a smoothing operator from $L^2_0(\Sigma)$, the space of square-integrable functions with vanishing mean on $\Sigma$, to $H^2(\Sigma)$ the second-order $L^2$-Sobolev space on $\Sigma$, through the natural $L^2$-pairing. In addition, for any function $f$ smooth over $\bar\Sigma\coloneqq\Sigma\sqcup\partial\Sigma$,
	\begin{equation}\label{eq:gauss-green formula closed}
		-\int_{\Sigma}G_g(x,\cdot)  \Delta_g f \d v_g = 2\pi(f(x) - m_g(f)).
	\end{equation}
	Further, $G_g$ is symmetric. As we will see below (Lemma~\ref{lemma:estimates_green_function}), $G_g$ is actually a smooth function away from the diagonal, with logarithmic singularity on the diagonal. Finally, if $g'$ is in the conformal class of the metric $g$ then the function $G_{g'}$ defined for $x\neq y \in \Sigma$ by~\footnote{In fact this defines $G_{g'}$ in the sense of integral kernels on the full $\Sigma\times\Sigma$.}
	\begin{equation} \label{eq:conf change green functions}
		G_{g'}(x,y) := G_g(x,y) - m_{g'}(G_g(x,\cdot)) - m_{g'}(G_g(\cdot,y)) + m_{g'}(G_g) \qt{where}
	\end{equation}
	\begin{equation*}
		m_{g'}(G_g):=\frac{1}{v_{g'}(\Sigma)^2} \iint_{\Sigma\times\Sigma} G_g(x,y) \d v_{g'}(x) \d v_{g'}(y)
	\end{equation*}
	solves the problem \eqref{eq:closed_pb_green} in the metric $g'$. 
	
	We now turn to the case where $\partial\Sigma \neq \emptyset$. The Neumann and Dirichlet Green's functions are defined respectively as the solutions of
	\begin{equation}\label{eq:Neumann_pb_green}
		\left\lbrace \begin{array}{ll}
			-\Delta_g G^N_g(\cdot,y) = 2\pi(\delta_y - \frac{1}{v_g(\Sigma)}) & \text{in } \Sigma \\
			\partial_{\vec{n}_g} G^N_g(x,y) = 0 & \text{for } x\in \partial\Sigma \text{ and for all } y\in\Sigma \\
			m_g(G^N_g(\cdot,y)) = 0 & \text{for all } y\in\Sigma
		\end{array} \right.
	\end{equation} 
	where recall that $\partial_{\vec{n}_g}$ is defined with respect to the outward normal vector $\vec{n}_g$, and
	\begin{equation}\label{eq:dirich_pb_green}
		\left\lbrace \begin{array}{ll}
			-\Delta_g G^D_g(\cdot,y) = 2\pi\delta_y & \text{in } \Sigma \\
			G^D_g(x,y) = 0 & \text{for } x\in \partial\Sigma \text{ and for all } y\in\Sigma.
		\end{array} \right.
	\end{equation}
	In the same way as in the closed case we see the Neumann and Dirichlet Green's functions as integral kernels of operators from an $L^2$ space to a second order Sobolev space on $\Sigma$. As such, for any $f$ smooth over $\bar\Sigma=\Sigma\cup\partial\Sigma$, the Neumann Green's function satisfies:
	\begin{equation} \label{eq:gauss-green formula neumann}
		\int_{\partial\Sigma}  G^N_g(x,\cdot)\partial_{\vec{n}_{g}}f \d \lambda_g - \int_{\Sigma} G^N_g(x,\cdot)\Delta_g f  \d v_g = 2\pi(f(x) - m_g(f))
	\end{equation}
	while the Dirichlet Green's function satisfies~\eqref{eq:gauss-green formula closed}. Both also enjoy the same change of metric formula~\eqref{eq:conf change green functions} as the closed Green's function $G_g$.
	
	We now provide a relation between the closed Green's function and the boundary ones, using the doubling of surface described in the previous subsection.
	\begin{lemma}\label{lemma:decomposition_green_function}
		Let $\Sigma$ be a compact surface of hyperbolic type with boundary $\partial\Sigma \neq \emptyset$, equipped with a uniform type 1 metric $g$. Let $\hat{\Sigma}$ be the doubled surface equipped with the smooth hyperbolic metric $\hat{g}$ and $\sigma$ be the canonical involution. Then for $x,y \in \overline{\Sigma}$ we have 
		\begin{equation} \label{eq:decomposition green neumann}
			G^N_g(x,y) = G_{\hat{g}}(x,y) + G_{\hat{g}}(x,\sigma(y))
		\end{equation}
		and
		\begin{equation} \label{eq:decomposition green dirichlet}
			G^D_g(x,y) = G_{\hat{g}}(x,y) - G_{\hat{g}}(x,\sigma(y)).
		\end{equation}
	\end{lemma}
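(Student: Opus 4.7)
The strategy is to define
\begin{equation*}
H^N(x,y) := G_{\hat g}(x,y) + G_{\hat g}(x,\sigma(y)), \qquad H^D(x,y) := G_{\hat g}(x,y) - G_{\hat g}(x,\sigma(y))
\end{equation*}
for $x,y \in \bar\Sigma$, and to check that these functions solve the defining problems~\eqref{eq:Neumann_pb_green} and~\eqref{eq:dirich_pb_green} respectively, so that uniqueness forces $H^N = G^N_g$ and $H^D = G^D_g$. The three facts I will rely on are: (i) $\sigma$ is an isometry of $\hat g$, so that the closed Green's function on $\hat\Sigma$ satisfies the symmetry $G_{\hat g}(\sigma(x),\sigma(y)) = G_{\hat g}(x,y)$; (ii) $\sigma$ fixes $\partial\Sigma$ pointwise but its differential reverses the normal, $d\sigma_x(\vec n_g) = -\vec n_g$ for $x \in \partial\Sigma$, since $\sigma$ swaps the two sides of the boundary in $\hat\Sigma$; and (iii) $v_{\hat g}(\hat\Sigma) = 2 v_g(\Sigma)$.

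Step one is to check the interior PDE. Fix $y$ in the interior of $\Sigma$, so that $\sigma(y)$ lies in the interior of $\sigma(\Sigma)$ and $G_{\hat g}(\cdot,\sigma(y))$ is smooth on a neighborhood of $\Sigma$. Applying $-\Delta_{\hat g}$ termwise and using~\eqref{eq:closed_pb_green} on $\hat\Sigma$, plus $\delta_{\sigma(y)}(x) = 0$ for $x$ in the interior of $\Sigma$, gives
\begin{equation*}
-\Delta_g H^N(\cdot,y) = 2\pi\Big(\delta_y - \tfrac{2}{v_{\hat g}(\hat\Sigma)}\Big) = 2\pi\Big(\delta_y - \tfrac{1}{v_g(\Sigma)}\Big)
\end{equation*}
and likewise $-\Delta_g H^D(\cdot,y) = 2\pi \delta_y$. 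Step two handles boundary behaviour. For the Dirichlet case, on $x \in \partial\Sigma$ we have $\sigma(x) = x$, hence by the isometry invariance $G_{\hat g}(x,\sigma(y)) = G_{\hat g}(\sigma(x),\sigma(y)) = G_{\hat g}(x,y)$, so that $H^D(x,y) = 0$. For the Neumann case, rewrite $G_{\hat g}(x,\sigma(y)) = G_{\hat g}(\sigma(x),y)$ using isometry invariance, and differentiate along a curve $\gamma$ starting at $x_0 \in \partial\Sigma$ with $\gamma'(0) = \vec n_g$: the curve $\sigma \circ \gamma$ has initial velocity $-\vec n_g$, so
\begin{equation*}
\partial_{\vec n_g,x} G_{\hat g}(x_0,\sigma(y)) \;=\; \tfrac{d}{dt}\Big|_{0} G_{\hat g}(\sigma(\gamma(t)),y) \;=\; -\,\partial_{\vec n_g,x} G_{\hat g}(x_0,y),
\end{equation*}
which cancels the first term in $\partial_{\vec n_g}H^N(x_0,y)$.

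Step three verifies the mean-zero condition for $H^N$: changing variables by $\sigma$ in the second term and using that $\sigma$ is an isometry yields
\begin{equation*}
\int_\Sigma G_{\hat g}(x,\sigma(y)) \d v_g(x) = \int_{\sigma(\Sigma)} G_{\hat g}(x',y) \d v_{\hat g}(x'),
\end{equation*}
so that $v_g(\Sigma)\, m_g(H^N(\cdot,y)) = \int_{\hat\Sigma} G_{\hat g}(\cdot,y) \d v_{\hat g} = 0$ by the normalization of the closed Green's function on $\hat\Sigma$. At this point $H^N$ and $H^D$ satisfy all the requirements of~\eqref{eq:Neumann_pb_green} and~\eqref{eq:dirich_pb_green}, and uniqueness yields the identifications. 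The main subtlety is the Neumann step: one must carefully exploit both the $\sigma$-invariance of $G_{\hat g}$ and the sign flip $d\sigma(\vec n_g) = -\vec n_g$ at the boundary. The remaining boundary cases where $x$ or $y$ lies on $\partial\Sigma$ follow by continuity (and are consistent with $\sigma(y) = y$ there, giving $H^D = 0$ and $H^N = 2G_{\hat g}$).
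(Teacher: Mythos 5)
Your proof is correct and follows essentially the same route as the paper: verify that the sum and difference of $G_{\hat g}(x,y)$ and $G_{\hat g}(x,\sigma(y))$ satisfy the defining problems \eqref{eq:Neumann_pb_green} and \eqref{eq:dirich_pb_green} (interior equation with $v_{\hat g}(\hat\Sigma)=2v_g(\Sigma)$, boundary condition via the $\sigma$-invariance of $G_{\hat g}$ and the normal sign flip, mean-zero by unfolding the integral to $\hat\Sigma$) and conclude by uniqueness. Your write-up is if anything slightly more explicit than the paper's, e.g.\ in justifying $d\sigma(\vec n_g)=-\vec n_g$ via a transverse curve.
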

	
	\begin{proof}
		We first have to show that the right-hand side of \eqref{eq:decomposition green neumann} is a solution to the problem \eqref{eq:Neumann_pb_green}. We compute the Laplacian in the variable $x$:
		$$
		-\frac{1}{2\pi}\Delta_{g} (G_{\hat{g}}(x,y) + G_{\hat{g}}(x,\sigma(y))) = \delta_y(x) - \frac{1}{v_{\hat{g}}(\hat{\Sigma})} + \delta_{\sigma(y)}(x) - \frac{1}{v_{\hat{g}}(\hat{\Sigma})} = \delta_y(x) - \frac{1}{v_{g}(\Sigma)}.
		$$
		Then we investigate the outer normal derivative along $\partial \Sigma$. Since it holds that $G_{\hat{g}}(x,y) = G_{\hat{g}}(\sigma(x),\sigma(y))$, then for $x\in\partial\Sigma$ and $y\in\Sigma$
		\begin{align*}
			&\partial_{\vec{n}_g} (G_{\hat{g}}(x,y) + G_{\hat{g}}(x,\sigma(y))) = \partial_{\vec{n}_g} G_{\hat{g}}(x,y) - \partial_{\vec{n}_{g}}G_{\hat{g}}(x,y) = 0 \quad\text{and likewise}\\
			&\int_\Sigma (G_{\hat{g}}(x,y) + G_{\hat{g}}(x,\sigma(y))) \d v_{g}(y) = \int_{\hat{\Sigma}} G_{\hat{g}}(x,y) \d v_{\hat{g}}(y) = 0.
		\end{align*}
		Combining these properties with the symmetry of the Green's functions implies the result. The proof of~\eqref{eq:decomposition green dirichlet} is immediate once one recalls that $\sigma = \Id$ on the boundary of $\Sigma$.
	\end{proof}
	
	Eventually we recall here some estimates from~\cite{GRV19} on the Green's function $G_{\hat{g}}$, which show that it has a logarithmic divergence on the diagonal:
	\begin{lemma} \label{lemma:estimates_green_function}
		On a compact surface $\Sigma$ without boundary equipped with a hyperbolic metric $g_0$, the Green's function $G_{g_0}$ can be written when $x$ and $y$ are closed to each other as
		\begin{equation} \label{eq:log divergence diagonal}
			G_{g_0}(x,y) = \log \frac{1}{d_{g_0}(x,y)} + f_{g_0}(x,y)
		\end{equation}
		with $f_{g_0}$ a smooth function on $\Sigma^2$. Moreover, for any $x_0$ in $\Sigma$ there are coordinates $z \in B(0,1) \subset \C$ such that $g_0 = \frac{4|dz|^2}{(1-|z|^2)^2}$ and 
		\begin{equation}
			G_{g_0}(z,z') = \log \frac{1}{|z-z'|} + F(z,z')
		\end{equation}
		near $x_0$, with $F$ smooth. Finally, if $g$ is another metric in the conformal class of the hyperbolic metric $g_0$, then the estimate~\eqref{eq:log divergence diagonal} holds near the diagonal with $f_{g}$ continuous.
	\end{lemma}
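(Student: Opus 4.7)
The strategy is to reduce everything to a local analysis near the diagonal, using the fact that a hyperbolic metric is, by the uniformization theorem applied to the universal cover, locally isometric to the Poincar\'e disk. First I would fix $x_0 \in \Sigma$ and use the exponential map at $x_0$ together with constant curvature $-1$ (or any fixed negative constant) to produce an isometry from a geodesic ball around $x_0$ onto a metric ball around $0$ in $(\mathbb{D},\frac{4|dz|^2}{(1-|z|^2)^2})$. This directly yields the coordinates appearing in the statement, and reduces the analysis of $G_{g_0}$ to that of the Green's function of the hyperbolic Laplacian in a disk.

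Next I would establish the logarithmic singularity locally. In the coordinates $z$, the hyperbolic Laplacian reads $\Delta_{g_0} = \tfrac{(1-|z|^2)^2}{4}(\partial_x^2+\partial_y^2)$, so that $\Delta_{g_0}\log|z-z'| = \tfrac{(1-|z|^2)^2}{4}\cdot 2\pi\delta_{z'}$, which equals $2\pi\delta_{z'}$ as a distribution on the surface (the conformal factor equals $1$ at $z=z'$). Consequently, the function $H(z,z') := G_{g_0}(z,z') - \log\tfrac1{|z-z'|}$ satisfies $-\Delta_{g_0,z}H(\cdot,z') = -\frac{2\pi}{v_{g_0}(\Sigma)}$ away from the diagonal, and globally this right-hand side is smooth. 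Elliptic regularity for $\Delta_{g_0}$ (with smooth coefficients) then implies that $H$ is smooth in a neighborhood of the diagonal; this gives the smooth function $F(z,z')$ of the second part of the statement. The symmetry and mean-zero normalization of $G_{g_0}$ ensure there is no ambiguity in the decomposition.

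To pass from $|z-z'|$ to the geodesic distance $d_{g_0}(x,y)$, I would use that in normal coordinates around $x_0$ (or in the hyperbolic coordinates above) one has $d_{g_0}(z,z') = |z-z'|\,\psi(z,z')$ with $\psi$ smooth and strictly positive in a neighborhood of the diagonal. Taking logarithms gives $\log\tfrac{1}{d_{g_0}(x,y)} = \log\tfrac1{|z-z'|} - \log\psi(z,z')$, so absorbing $\log\psi$ into the remainder yields the claimed form with $f_{g_0}$ smooth. To globalize, I cover $\Sigma$ by finitely many such charts and patch using that $G_{g_0}$ is smooth away from the diagonal (again by elliptic regularity applied to $-\Delta_g G_g(\cdot,y) = -2\pi/v_g(\Sigma)$ on $\Sigma\setminus\{y\}$).

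Finally, for a conformally related metric $g=e^\varphi g_0$, I would rely on the explicit transformation formula~\eqref{eq:conf change green functions}: the corrections $m_{g'}(G_{g_0}(x,\cdot))$, $m_{g'}(G_{g_0}(\cdot,y))$ and $m_{g'}(G_{g_0})$ are bounded and continuous in $(x,y)$, so the singular part of $G_g$ still comes from $G_{g_0}$. Writing $d_g(x,y) = e^{\varphi(x_0)/2}\,d_{g_0}(x,y)\,(1+o(1))$ as $x,y\to x_0$, the difference $\log\tfrac{1}{d_g(x,y)} - \log\tfrac{1}{d_{g_0}(x,y)}$ converges to $-\tfrac{\varphi(x_0)}{2}$, which is only continuous in $x_0$ when $\varphi$ is merely continuous; this is what forces $f_g$ to be only continuous rather than smooth. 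The main delicate point in the argument is precisely this last regularity tracking: establishing smoothness of $f_{g_0}$ requires elliptic regularity at the level of the parametrix, while in the conformally changed case one has to be careful that no extra smoothness is silently assumed on $\varphi$ so that only continuity is claimed.
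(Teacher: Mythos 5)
Your proposal is correct in outline, but note that the paper does not actually prove this lemma: it is explicitly recalled from the reference [GRV19] (``we recall here some estimates from [GRV19]''), so your contribution is a self-contained reconstruction of the standard argument that the paper delegates to that reference. Your route --- local isometry to the Poincar\'e disk, subtraction of the flat logarithmic parametrix, elliptic regularity for the remainder, comparison of $|z-z'|$ with $d_{g_0}$, and the transformation formula~\eqref{eq:conf change green functions} for the conformally changed metric --- is exactly the expected one, and the final observation that $f_g$ is only continuous because $\varphi$ is not assumed smooth is the right reading of the last assertion. Two small points deserve correction or expansion. First, your parenthetical justification that $\tfrac{(1-|z|^2)^2}{4}\cdot 2\pi\delta_{z'}$ equals $2\pi\delta_{z'}$ ``because the conformal factor equals $1$ at $z=z'$'' is wrong as stated (the factor is $4/(1-|z'|^2)^2\neq 1$); the identity holds because the factor $\lambda^{-1}$ appearing in $\Delta_{g_0}=\lambda^{-1}\Delta_{\rm flat}$ cancels exactly against the factor $\lambda$ relating the Lebesgue delta to the delta taken with respect to $v_{g_0}$, so the conclusion is independent of the value of the conformal factor. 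Second, one-variable elliptic regularity in $z$ for fixed $z'$ only gives smoothness of $H(\cdot,z')$ in the first variable; joint smoothness of $f_{g_0}$ on $\Sigma^2$ requires the parametrix (or hypoellipticity on the product) argument you allude to at the end, so that step should be promoted from an aside to an actual part of the proof.
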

	This logarithmic divergence allows to make sense of the Gaussian Multiplicative Chaos measures associated to the Gaussian Field with covariance kernel $G_g$ in subsequent sections.
	
	\subsubsection{Regularized determinants} \label{subsec:determinants}
	
	When dealing with Gaussian measures on finite-dimensional spaces, one has to compute determinants of the inverse of covariance matrices. In the infinite-dimensional setting, we will see that the natural candidate to generalize this picture is the Gaussian Free Field, whose covariance structure is given by the Green's function, so that one is naturally led to consider determinants of Laplacians. In order to define the determinant of the Laplacian on a compact and connected Riemann surface $\Sigma$ one needs a regularization procedure. To this aim the classical method is to introduce the spectral $\zeta$-function
	$$
	\zeta(s) = \sum_{\lambda>0}\lambda^{-s}
	$$ 
	where the $\lambda$'s are the eigenvalues of the (positive) Laplacian~\footnote{So for us, $-\Delta_g$ on closed surfaces, and $-\Delta_g$ with Neumann or Dirichlet conditions on surfaces with boundary, which we will denote later $-\Delta_g^N$ and $-\Delta_g^D$.}.  On a compact surface, the spectrum of the positive Laplacian is discrete and forms an increasing sequence $0 \leq \lambda_0 < \lambda_1 \leq ... \to \infty$, with $\lambda_0=0$ for the closed and the Neumann Laplacian, and $\lambda_0>0$ for the Dirichlet Laplacian. The asymptotics of the eigenvalues are given by the Weyl's law (see e.g. \cite{weyl_law_ivrii}), which allows one to prove that the spectral $\zeta$-function converges for $\Re(s)$ positive enough, and admits a meromorphic continuation to $\Re(s)>-1$ that is regular at $0$ (see~\cite{OPS88}). We then define the \emph{regularized determinant} as
	$$
	\text{det}^\prime(-\Delta_g) = e^{-\zeta'(0)}
	$$
	and simply write $\det$ for the Dirichlet Laplacian since the full spectrum is involved in the determinant. The Polyakov-Alvarez formula gives the behavior of the determinant under a conformal change of metric, and will be very useful to compute the conformal anomaly of Toda correlation functions. The formula was originally derived in the closed case by Polyakov~\cite{polyakov_bosonic_strings,polyakov_fermionic_strings} and refined in the boundary case by Alvarez~\cite{Alv83} (see also~\cite{McKeanSinger}). We state it as a lemma, and refer to e.g. \cite{OPS88} for a concise exposition of this kind of results.
	\begin{lemma}\label{lemma:background/polyakov formula}
		Let $g' = e^\varphi g$ be a smooth Riemannian metric in the conformal class of $g$. Then we have the following relation between determinants of the Laplacians associated with $g$ and $g'$:
		\begin{equation}
			\log \frac{\det\left(-\Delta_{g'}\right)}{\det\left(-\Delta_{g}\right)}=-\frac{1}{48\pi} \left(\int_{\Sigma} (|d_g\varphi|_g^2 + 2R_g\varphi) \d v_g + 4\int_{\partial\Sigma} k_g\varphi \d\lambda_g\right) \pm \frac{1}{8\pi} \int_{\partial\Sigma} \partial_{\vec{n}_{g}} \varphi \d\lambda_g
		\end{equation}
		where $\det(-\Delta_g):=\det'(-\Delta_g)/v_g(\Sigma)$ for the closed Laplacian and the Neumann Laplacian. The sign of the corrective term is positive in case of Neumann boundary conditions and negative in case of Dirichlet boundary conditions.\footnote{Our convention for the conformal factor differs from the standard convention by a factor $2$, i.e. $\varphi=2\omega$ where $\omega$ stands for the usual conformal factor, which explains that the formula is slightly different.}
	\end{lemma}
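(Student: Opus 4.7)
The plan is to deduce the formula via the classical variational/heat kernel argument of Polyakov--Alvarez. Interpolate between $g$ and $g'$ through the conformal family $g_t := e^{t\varphi}g$ for $t\in[0,1]$, and integrate the one-parameter derivative of $\log\det'(-\Delta_{g_t})$. The key starting observation is that in dimension two, functions have conformal weight zero for the Laplacian, so that $-\Delta_{g_t} = e^{-t\varphi}(-\Delta_g)$; consequently $\dot L_t = -\varphi\, L_t$ where $L_t := -\Delta_{g_t}$.

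Next I would use the heat-kernel representation of the zeta function and the identity
\begin{equation*}
\frac{d}{dt}\mathrm{Tr}\bigl(e^{-sL_t}\bigr) \;=\; -s\,\mathrm{Tr}\bigl(\dot L_t\, e^{-sL_t}\bigr) \;=\; s\,\frac{d}{ds}\mathrm{Tr}\bigl(\varphi\, e^{-sL_t}\bigr),
\end{equation*}
to reduce, via integration by parts in $s$ and differentiation at $s=0$ of $\zeta_{g_t}(s) = \frac{1}{\Gamma(s)}\int_0^\infty s^{s-1}\bigl(\mathrm{Tr}(e^{-sL_t}) - N_t\bigr)\d s$ (with $N_t$ the dimension of the kernel), the quantity $\frac{d}{dt}\log\det'(-\Delta_{g_t})$ to the constant term in the short-time asymptotic expansion of $\mathrm{Tr}(\varphi\, e^{-sL_t})$. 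On a compact surface with boundary, the Minakshisundaram--Pleijel / Seeley--DeWitt expansion yields
\begin{equation*}
\mathrm{Tr}\bigl(\varphi\, e^{-sL_t}\bigr) \;=\; \frac{a_{-1}(\varphi,g_t)}{s} + \frac{a_{-1/2}(\varphi,g_t)}{\sqrt s} + a_0(\varphi,g_t) + O(\sqrt s),
\end{equation*}
where the coefficient $a_0$ is explicitly computable in terms of $\int_\Sigma \varphi R_{g_t}\d v_{g_t}$, $\int_{\partial\Sigma}\varphi k_{g_t}\d\lambda_{g_t}$, and $\int_{\partial\Sigma}\partial_{\vec n_{g_t}}\varphi\,\d\lambda_{g_t}$; the boundary-derivative term enters with opposite signs in the Neumann versus Dirichlet case (this is the origin of the $\pm$ in the statement). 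To handle the kernel one must also separately track $\frac{d}{dt}\log v_{g_t}(\Sigma)$, which accounts for the normalization $\det(-\Delta_g)=\det'(-\Delta_g)/v_g(\Sigma)$ in the closed and Neumann cases.

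The final step is to use the conformal change formulas \eqref{eq:conf change curvature} to express $R_{g_t}\d v_{g_t}$ and $k_{g_t}\d\lambda_{g_t}$ in terms of $g$: one gets $R_{g_t}\d v_{g_t} = (R_g - t\Delta_g\varphi)\d v_g$ and $k_{g_t}\d\lambda_{g_t} = (k_g + \tfrac{t}{2}\partial_{\vec n_g}\varphi)\d\lambda_g$. Plugging these into $a_0(\varphi,g_t)$, integrating over $t\in[0,1]$, and applying the Stokes/Green identity $\int_\Sigma \varphi\Delta_g\varphi\, \d v_g = \int_{\partial\Sigma}\varphi\partial_{\vec n_g}\varphi\, \d\lambda_g - \int_\Sigma |d_g\varphi|_g^2\, \d v_g$ produces the Dirichlet-energy term $|d_g\varphi|_g^2$, the scalar-curvature and geodesic-curvature terms, together with a residual boundary term proportional to $\int_{\partial\Sigma}\partial_{\vec n_g}\varphi\, \d\lambda_g$. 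Careful bookkeeping of the Neumann/Dirichlet signs in $a_0$ yields exactly the stated formula.

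The main obstacle is the precise identification of the boundary contributions in the heat-kernel coefficient $a_0$, in particular keeping track of the sign depending on the boundary condition and of the correction coming from the volume factor in $\det$ versus $\det'$. Once the expansion is in hand, the algebraic manipulations are a direct (if tedious) application of the conformal transformation rules.
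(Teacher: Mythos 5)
The paper does not actually prove this lemma: it is quoted as the classical Polyakov--Alvarez formula, with references to Polyakov, Alvarez and Osgood--Phillips--Sarnak. Your sketch is precisely the standard variational heat-kernel argument used in those references, and it is sound in outline: differentiate along the path $g_t=e^{t\varphi}g$ using $\dot L_t=-\varphi L_t$ (valid because the Laplacian on functions has conformal weight zero in dimension two), integrate by parts in the heat-time variable, read off $\frac{d}{dt}\log\det'(-\Delta_{g_t})$ from the constant term $a_0$ in the short-time expansion of $\mathrm{Tr}(\varphi e^{-uL_t})$, track the kernel via the volume normalization, and finish with the conformal change formulas and Green's identity.

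Two slips are worth flagging, neither of which invalidates the strategy. First, in your displayed identity the heat-time variable and the zeta argument are both denoted $s$, and the sign is off: with $\dot L_t=-\varphi L_t$ one has $\frac{d}{dt}\mathrm{Tr}(e^{-uL_t})=u\,\mathrm{Tr}(\varphi L_t e^{-uL_t})=-u\,\frac{d}{du}\mathrm{Tr}(\varphi e^{-uL_t})$, so the right-hand side of your second equality should carry a minus sign. Second, and more substantively, the entire quantitative content of the lemma --- the constants $-\frac{1}{48\pi}$ and $\pm\frac{1}{8\pi}$, and the fact that the boundary-normal-derivative term flips sign between Neumann and Dirichlet conditions --- resides in the boundary heat coefficient $a_0$, which you declare ``explicitly computable'' but do not compute. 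That computation (McKean--Singer, Alvarez) is the genuinely nontrivial step; as written your argument establishes the shape of the formula but not the formula itself, which is acceptable only because the result is standard and the paper likewise defers to the literature for it.
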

	
	We will also consider the regularized determinant $\det^\prime\left(-\frac1{2\pi}\Delta_g\right)$ to define the partition function of the Gaussian Free Field. According to the previous discussion we have  
	\begin{equation}
		{\det}^\prime\left(-\frac1{2\pi}\Delta_g\right)=\left(2\pi\right)^{-\zeta_0}{\det}^\prime\left(-\Delta_g\right)
	\end{equation}
	where $\zeta_0$ is independent of the metric $g$ and given by $\zeta_0=\frac{\chi(\Sigma)}{6}-1$ for a closed surface and in the Neumann case while in the Dirichlet case the determinants are not truncated and $\zeta_0=\frac{\chi(\Sigma)}{6}$. Note that in the closed case $\zeta_0=\zeta(0)$. The above identity can be recovered using Lemma~\ref{lemma:background/polyakov formula} by taking $\varphi=\ln 2\pi$ together with the Gauss-Bonnet formula~\eqref{eq:gauss-bonnet}.
	
	\subsection{Finite-dimensional complex simple Lie algebras}\label{subsec:Lie}
	
	The definition of Toda CFTs relies on a Lie algebra structure. We gather in this section some classical facts about finite-dimensional complex simple Lie algebras, and introduce the notations that will be used throughout the paper. Additional details can be found \textit{e.g.} in the textbook~\cite{Hum72}.
	
	\subsubsection{Basic facts}
	
	In virtue of the Cartan-Killing classification, every finite-dimensional complex simple Lie algebra is isomorphic to either one of the following family of classical Lie algebras: $A_n$ for $n\geq 1$ (corresponding to $\mathfrak{sl}_{n+1}(\C)$), $B_n$ for $n\geq 2$ (corresponding to $\mathfrak{so}_{2n+1}(\C))$, $C_n$ for $n\geq 3$ (corresponding to $\mathfrak{sp}_{2n}(\C)$) and $D_n$ for $n\geq 4$ (corresponding to $\mathfrak{so}_{2n}(\C)$), or one of the five exceptional simple Lie algebras: $E_6$, $E_7$, $E_8$, $F_4$ and $G_2$. For any finite-dimensional complex simple Lie algebra $\mathfrak{g}$ there is a natural Euclidean space $\mathfrak{a}$ that is such that the Cartan subalgebra of $\mathfrak{g}$ decomposes as $\mathfrak{a}+i\mathfrak{a}$ (this space $\a$ is the real Cartan subalgebra of the split real form of $\g$). To this Euclidean space is attached a scalar product $\ps{\cdot,\cdot}$ which is proportional to the Killing form of $\mathfrak{g}$ restricted to $\a$. The Euclidean space $(\mathfrak{a},\ps{\cdot,\cdot})$ is unique up to isomorphism and can be realized as $\R^r$ equipped with its standard scalar product, where $r$ is the rank of $\mathfrak{g}$. 
	
	The so-called simple roots $(e_i)_{i=1,...,r}$ form a particular basis of $\mathfrak{a}$; this basis is such that $\ps{e_i^\vee,e_j} = A_{i,j}$ where $A$ is the Cartan matrix of $\mathfrak{g}$ and $e_i^\vee := 2\frac{e_i}{\ps{e_i,e_i}}$ is the coroot of $e_i$. In the sequel we will assume that the scalar product is renormalized in such a way that the longest simple root has squared length $2$, which is a standard assumption. The dual basis of $(e_i^\vee)_{1\leq i\leq r}$ is given by $(\omega_i)_{1\leq i\leq r}$ where the
	$$
	\omega_i \coloneqq \sum_{j=1}^r (A^{-1})_{j,i} e_j\qt{for}i=1,\cdots,r
	$$
	are called the fundamental weights. They verify $\ps{e_i^\vee,\omega_j} = \delta_{ij}.$ 
	Another important object is the Weyl vector, defined by $\rho = \sum_{i=1}^r \omega_i$, which is such that $\ps{\rho,e_i^\vee}=1$ for all $i$. We also define the Weyl vector associated to the coroots by $\rho^\vee = \sum_{i=1}^r \omega_i^\vee$ where $\omega_i^\vee$ is defined so that $\ps{\omega_i^\vee,e_j} = \delta_{ij}$ for $i=1,...r$.
	
	\subsubsection{The outer automorphism group}\label{subsec:out}
	
	The notion of outer automorphism~\cite[Section 12.2]{Hum72} naturally arises when discussing the admissible boundary conditions of the theory, as discussed in the introduction.
	
	Let $\g$ be a finite-dimensional complex simple Lie algebra with associated root system $\mc R$ (such a root system characterizes the Lie algebra $\g$ up to isomorphism). The automorphism group of $\mc R$ is a semi-direct product $\text{Aut}(\mc R) = \text{Out}(\mc R) \ltimes W(\g)$, where elements of $\text{Out}(\mc R)$ are called outer automorphisms and correspond to Dynkin diagram automorphisms, while $W(\g)$ is the Weyl group of $\g$, i.e. automorphisms of $\mc R$ induced by inner automorphisms of $\g$ (generated by elements of the form Ad$(g)$ where $g$ lies in a Lie group with Lie algebra $\g$ and Ad is the adjoint map). Standard results~\cite[Section 12.2]{Hum72} then show that for a root system associated with a finite-dimensional complex simple Lie algebra $\mathfrak{g}$ the outer automorphism group is either: trivial, this is the case for $A_1$, $B_n$, $C_n$, $E_7$, $E_8$, $F_4$ and $G_2$, or isomorphic to the cyclic group of order two in all the other cases, except for $D_4$ where the outer automorphism group is isomorphic to the group of permutations on three elements (thus of order 6). In all cases an element $\tau$ in $\text{O}(\mc R)$ is an isometry of $\a$, i.e. $\ps{u,v}=\ps{\tau(u),\tau(v)}$ for any $u,v\in\a$.
	
	As an example, for the Lie algebra $A_n$ ($\sl_{n+1}$) the non-trivial automorphism $\tau$ is given by $e_i \mapsto e_{n+1-i}$, so that for $A_2$ it swaps the two simple roots: if $u = u_1e_1 + u_2e_2 \in \mathfrak{a}$, then $\tau(u) = u_2e_1 + u_1e_2$. In general the outer automorphisms are particular permutations of the simple roots. 
	
	\subsubsection{Folding of root systems}\label{subsec:fold}
	Given $\tau$ in $\text{Out}(\mc R)$ let us denote by $\a_N\subseteq\a$ the corresponding invariant subspace and by $d_N$ its dimension. There is a natural root system in $\a_N$ defined by considering, with the $(\mc O_j)_{1\leq j\leq d_N}$ being the orbits of $\tau$ (viewed as acting on $\{1,\cdots,r\}$):
	\begin{equation}
		f_j\coloneqq \frac{1}{\norm{\mc O_j}}\sum_{i\in \mc O_j}e_i\qt{for $1\leq j\leq d_N$}
	\end{equation}
	Indeed one can check that $\ps{f_i^\vee,f_j}=A_{i,j}^\tau$ where $A^\tau$ is the Cartan matrix of the complex simple Lie algebra $\g^\tau$ which is the \textit{folding} of $\g$. As a consequence $(f_j)_{1\leq j\leq d_N}$ can be viewed as simple roots for the \textit{folded} root system $\Phi^\tau$ of $\g^\tau$, with the length of the longest root being now given by some $\kappa$. The table below describes this folding in all non-trivial cases.
	\begin{center}
		\begin{tabular}{ |c|c|c|c|c| } 
			\hline
			Root system $\mc R$ & Order of $\tau\in\text{Out}(\mc R)$ & Invariant sub-root system & Dimension of $\a_N$ & $\kappa^2$  \\ 
			\hline
			$A_{2n}$ & 2 & $B_n$ & $n$ & $1$\\ 
			$A_{2n-1}$ & 2 & $C_n$ & $n$ & $2$ \\ 
			$D_n, n\geq 4$ & 2 & $B_{n-1}$ & $n-1$ & $2$ \\ 
			$D_4$ & 3 & $G_2$ & $2$ & $2$\\
			$E_6$ & 2 & $F_4$ & $4$ & $2$\\
			\hline
		\end{tabular}
	\end{center}
	For outer automorphisms of order two the $f_j$'s are of the form $f_j=e_i+\tau(e_i)$ or $f_j=\frac{e_i+\tau(e_i)}2$, so that $\mathfrak{a}_N = \text{Span}(v+\tau(v), v \in \mathfrak{a})$. Moreover such automorphisms are self-adjoint with respect to the scalar product on $\mathfrak{a}$, which allows to decompose the space $\mathfrak{a}$ as the orthogonal sum $\mathfrak{a}=\mathfrak{a}_N \bigoplus \mathfrak{a}_D$ where $\mathfrak{a}_D = \text{Span}(v-\tau(v), v\in\mathfrak{a})$. For the two 3-cycles of $\text{Out}(D_4)$, $\mathfrak{a}=\mathfrak{a}_N \bigoplus \mathfrak{a}_D$ with $\mathfrak{a}_N = \text{Span}\left(v+\tau(v)+\tau^2(v), v \in \mathfrak{a}\right)$ and $\mathfrak{a}_D = \text{Span}(v-\tau(v),v-\tau^2(v), v\in\mathfrak{a})$. 
	
	It is important to note that in all non-trivial cases the Weyl vector $\rho$ belongs to the invariant subspace $\a_N$: this is due to the fact that for simply-laced Lie algebras it is characterized by $\ps{\rho,e_i}=1$ for all $1\leq i\leq r$, a condition which is preserved by the action of an outer automorphism. Hence the background charge $Q=\gamma\rho+\frac2\gamma\rho^\vee$ always belongs to $\a_N$.
	
	Hereafter for notational simplicity the outer automorphism under consideration will often be denoted by a star $\cdot^*$. Likewise we will denote by $p_N$ (resp. $p_D$) the orthogonal projection on $\a_N$ (resp. $\a_D$). We also set for $\tau$ non-trivial 
	\begin{equation} \label{eq:Q tau gamma tau}
		Q_\tau\coloneqq \gamma\rho_\tau+\frac2{\gamma}\rho,\text{where  $\ps{\rho_\tau,f_j}=\frac{\norm{f_j}^2}{2}$ for all $1\leq j\leq d_N$},
	\end{equation}
	and introduce the notation $I_\tau\coloneqq\left\{j\in\{1,\cdots,d_N\},\quad\norm{f_j}^2=2\right\}$.

	\subsubsection{Semi-simple Lie algebras} Any finite-dimensional complex \textit{semi-simple} Lie algebra $\g$ decomposes as a direct sum $\g=\bigoplus_{k=1}^p \g_k$ where the $\g_k$'s are finite-dimensional complex \textit{simple} Lie algebras. As pointed out in~\cite[Remark 1.2]{CRV21}, if $\g=\g_1\oplus\g_2$ is a direct sum of semi-simple Lie algebras then the Toda correlation functions factorize as $\ps{\V}^\g=\ps{\V}^{\g_1}\ps{\V}^{\g_2}$ (and the Weyl anomaly thus shows that the central charges add up, see Remark~\ref{rem:central charge}). This allows one to reduce the study of Toda theories for semi-simple Lie algebras to those with respect to simple Lie algebras. We thus work with $\g$ simple in the sequel. 
	
	\section{Toda theories on closed Riemann surfaces} \label{sec:closed}
	
	In this section, we propose a mathematical construction of Toda CFTs based on a compact hyperbolic Riemann surface without boundary, and associated to any complex simple Lie algebra $\g$. For this purpose we rely on a probabilistic framework based on Gaussian Free Fields (GFFs hereafter) and Gaussian Multiplicative Chaos (GMC in the sequel).
	Throughout this section, $\Sigma$ will denote a connected compact Riemann surface without boundary, assumed to be of hyperbolic type in the sense that $\chi(\Sigma) < 0$. We equip $\Sigma$ with a smooth Riemannian metric $g$. 
	
	\subsection{Gaussian Free Field} \label{sec:GFF closed}
	The action functional entering the path integral definition of Toda CFTs~\eqref{eq:toda path integral} is made of several terms. Our first goal is to interpret the quadratic component, that is, to make sense of 
	\begin{equation} \label{eq:path integral GFF}
		\int F(\bphi) e^{-\frac{1}{4\pi}\int_{\Sigma} |d_g\bphi|_g^2 \d v_g} D_g\bphi
	\end{equation} 
	where in the above the gradient $d_g$ acts component by component on functions from $\Sigma$ to $\a$. This is achieved by the introduction of the vectorial Gaussian Free Field (GFF hereafter). Indeed, the formal integral \eqref{eq:path integral GFF} can be written as
	\begin{equation} \label{eq:path integral GFF gaussian}
		\int F(\bphi) e^{-\frac12 \int_{\Sigma} \ps{\bphi,-\frac{1}{2\pi}\Delta_g\bphi}_g \d v_g} D_g\bphi	
	\end{equation}
	where, again, the Laplacian acts independently on the $r$ components of $\bphi$. Equation~\eqref{eq:path integral GFF gaussian} describes (formally) the law of a Gaussian field with values in $\a$, mean 0 and positive definite covariance kernel $(-\frac{1}{2\pi}\Delta_g)^{-1}$. There is one subtlety here in that this observation is valid only on the orthogonal of the kernel of the Laplacian, so that the above Gaussian field would be defined only on this subspace. We will come back to this issue later. The inverse of the one-dimensional Laplacian (outside of its kernel) is given in the sense of integral kernels by the Green's function, and since the $r$-dimensional Laplacian in \eqref{eq:path integral GFF gaussian} acts independently on the $r$ components of the field, it is natural to define our underlying field as follows. 
	\begin{defdef}
		We set $\X^g$ to be the centered Gaussian field on $\Sigma$ with covariance structure
		$$
		\E[(\ps{u,\X^g},f)_{H^1(\Sigma,g)} (\ps{v,\X^g},f')_{H^1(\Sigma,g)}] = \ps{u,v} \int_{\Sigma^2} G_g(x,y) f(x) f'(y) \d v_g(x) \d v_g(y)
		$$
		for all $u,v\in\a$ and $f,f' \in H^1(\Sigma,g)$, where $(\ps{u,\X^g},f)_{H^1(\Sigma,g)}$ is the pairing between $H^1$ and its dual $H^{-1}$.
	\end{defdef}
	Indeed, the field $\X^g$ is a random distribution, and lives almost surely in the negative index Sobolev space $H^{-1}(\Sigma\to\a,g)$, which means that each component is in $H^{-1}$. As already mentioned, this Gaussian field is only defined up to constants, and we need to take this fact into account in order to define the path integral~\eqref{eq:path integral GFF}. We overcome this issue by tensorizing the law of the GFF by the Lebesgue measure on $\R^r$. That is, we interpret \eqref{eq:path integral GFF gaussian} as
	\begin{equation} \label{eq:interpretation path int gff}
		\int F(\bphi) e^{-\frac{1}{4\pi}\int_{\Sigma} |d_g\bphi|_g^2 \d v_g} D_g\bphi \propto \int_\a \E[F(\X^g+\bm{c})] \d\bm{c}
	\end{equation}
	with the expectation taken with respect to the GFF and the integration over $\a$ defined by 
	$$
	\int_\a F(\bm{c}) \d\bm{c} = \int_{\R^r} F\Big(\sum_{i=1}^r \bm{v}_i c_i\Big) \prod_{i=1}^r \d c_i
	$$
	where $(\bm{v}_i)$ is any orthonormal basis of $\a$. The theory of GFF is well-known and we refer for example to \cite{Ber17,PW21} for a review. For the sake of simplicity we will use the notation
	\begin{equation} 
		\E[\ps{u,\X^g(x)}\ps{v,\X^g(y)}] = \ps{u,v} G_g(x,y)
	\end{equation}
	hereafter. Note that $\X^g$ can be represented as the sum $\X^g=\sum_{i=1}^rX_i^g\bm v_i$ where the $(X_i^g)_{1\leq i\leq r}$ are independent, real-valued GFFs over $\Sigma$, and $(\bm v_i)_{1\leq i\leq r}$ is any orthonormal basis of $\a$. 
	
	For the picture to be complete, it misses a multiplicative normalization term called the partition function: this corresponds to the total mass of the path integral \eqref{eq:path integral GFF}. Following the analogy with the finite-dimensional case mentioned in Subsection~\ref{subsec:determinants}, we define it using the regularized determinant of the Laplacian. 
	\begin{defdef}\label{def:partition function GFF 2D}
		The partition function of the GFF $\X^g$ is defined by
		$$
		\mathcal{Z}_\X(\Sigma,g) = \left(\frac{\det{'}\left(-\frac{1}{2\pi}\Delta_g\right)}{ v_g(\Sigma)}\right)^{-\frac{r}{2}}.
		$$
		where $\Delta_g$ is the (negative) Laplacian acting on functions from $\Sigma$ to $\R$.
	\end{defdef}
	An important property of the GFF is its behavior under conformal changes of metric. This has a direct implication for the measure defined by Equation~\eqref{eq:interpretation path int gff}.
	\begin{lemma} \label{lemma:change of metric GFF}
		Let $g'$ be conformally equivalent to the metric $g$. Then for all positive continuous functional $F$ over $H^{-1}(\Sigma\to\a,g)$, 
		\begin{equation} \label{eq:conf change GFF}
			\X^{g'} \eqlaw \X^g - m_{g'}(\X^g),\qt{so that}\int_{\a} \E[F(\X^{g'} + \c)] \d\c = \int_{\a} \E[F(\X^{g} + \c)] \d\c.
		\end{equation}
		
	\end{lemma}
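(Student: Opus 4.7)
The plan is to exploit the fact that both $\X^{g'}$ and $\X^g - m_{g'}(\X^g)$ are centered Gaussian $\a$-valued distributions, so that equality in law reduces to matching covariances, which in turn is precisely the content of the conformal change formula~\eqref{eq:conf change green functions} for the Green's function.

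First I would set $\tilde{\X} := \X^g - m_{g'}(\X^g)$, where the mean is taken componentwise (equivalently, test against the constant functional $v_{g'}(\Sigma)^{-1}\bm 1$). Since $\X^g$ is a centered Gaussian field with values in $H^{-1}(\Sigma\to\a,g)$ and $m_{g'}$ is a linear operation, $\tilde\X$ is again a centered Gaussian field, and it remains to compute its covariance. Using the bilinearity of the covariance together with the defining identity $\E[\ps{u,\X^g(x)}\ps{v,\X^g(y)}]=\ps{u,v}G_g(x,y)$, I would expand
\begin{equation*}
\E[\ps{u,\tilde\X(x)}\ps{v,\tilde\X(y)}] = \ps{u,v}\bigl(G_g(x,y) - m_{g'}(G_g(x,\cdot)) - m_{g'}(G_g(\cdot,y)) + m_{g'}(G_g)\bigr),
\end{equation*}
which by Equation~\eqref{eq:conf change green functions} equals $\ps{u,v}G_{g'}(x,y)$. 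Note also that $m_{g'}(\tilde\X)=0$, matching the normalization of $\X^{g'}$. Hence $\tilde\X$ and $\X^{g'}$ are two centered Gaussian random distributions with the same covariance, and therefore have the same law.

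For the second identity, I would use Fubini together with translation invariance of the Lebesgue measure on $\a$. Precisely, conditionally on the realization of $\X^g$, the vector $m_{g'}(\X^g)$ is a fixed element of $\a$, so the change of variables $\c' = \c - m_{g'}(\X^g)$ gives
\begin{equation*}
\int_{\a} F(\X^g - m_{g'}(\X^g)+\c)\d\c = \int_{\a} F(\X^g+\c')\d\c'.
\end{equation*}
Taking expectation and using the law equality established above yields the desired identity $\int_\a \E[F(\X^{g'}+\c)]\d\c = \int_\a\E[F(\X^g+\c)]\d\c$. Positivity and continuity of $F$ on $H^{-1}(\Sigma\to\a,g)$ (together with the conformal invariance of this Sobolev space up to equivalence of norms) ensure the use of Tonelli is legitimate.

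The only mild subtlety — and the step I would write most carefully — is justifying that the distributional identities above make sense: the Gaussian field $\X^g$ is only defined as a random element of $H^{-1}$, so the covariance equalities must be read when tested against $H^1$ functions, and the quantity $m_{g'}(\X^g)$ must be interpreted via the pairing with the constant function $\bm 1 \in H^1(\Sigma,g)$. Once this formal reading is fixed, everything reduces to the Green's function identity~\eqref{eq:conf change green functions} and a translation of the Lebesgue measure.
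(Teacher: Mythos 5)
Your proposal is correct and follows essentially the same route as the paper: define $\tilde\X := \X^g - m_{g'}(\X^g)$, match its covariance with that of $\X^{g'}$ via the conformal change formula~\eqref{eq:conf change green functions}, and then shift the zero-mode $\c$ by $m_{g'}(\X^g)$ using translation invariance of the Lebesgue measure on $\a$. The extra care you take about the distributional interpretation of $m_{g'}(\X^g)$ and the use of Tonelli is reasonable but not needed beyond what the paper already records.
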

	\begin{proof}
		Let $\tilde{\X}^g \coloneqq \X^g - m_{g'}(\X^g)$. For any $u,v \in \a$ and $x,y \in \Sigma$ we have
		\begin{align*}
			\E[\ps{u,\tilde{\X}^g(x)}\ps{v,\tilde{\X}^g(y)}] = \ps{u,v}&\left(G_g(x,y) - m_{g'}(G_g(x,\cdot)) - m_{g'}(G_g(\cdot,y)) \right.\\&\left.+ v_{g'}(\Sigma)^{-2}\iint_{\Sigma^2} G_g(x,y) \d v_{g'}(x) \d v_{g'}(y)\right)
		\end{align*}
		which is equal to $\ps{u,v}G_{g'}(x,y)$ thanks to Equation~\eqref{eq:conf change green functions}. Since both fields are centered Gaussian fields this is enough to conclude that $\tilde{\X}^g$ has the same law as $\X^{g'}$. Now to prove the lemma we make a change of variables in the zero-mode:
		$$
		\int_{\a} \E[F(\X^{g'} + \c)]\d\c = \int_{\a} \E[F(\X^{g} - m_{g'}(\X^g) + \c)]\d\c = \int_{\a} \E[F(\X^{g} + \c)]\d\c .
		$$
	\end{proof}
	Further, as a straightforward consequence of Lemma~\ref{lemma:background/polyakov formula} one obtains the following conformal anomaly for the partition function of the GFF.
	
	\begin{prop} \label{eq:conformal anomaly X}
		Let $g' = e^\varphi g$ be a metric in the conformal class of $g$. Then we have
		\begin{equation} 
			\mathcal{Z}_\X(\Sigma,g') = \mathcal{Z}_\X(\Sigma,g) \exp\left(\frac{r}{96\pi} \int_{\Sigma} (|d_g\varphi|_g^2 + 2R_g\varphi) \d v_g\right).
		\end{equation}
	\end{prop}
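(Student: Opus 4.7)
The plan is a direct reduction to the Polyakov--Alvarez formula recalled in Lemma~\ref{lemma:background/polyakov formula}, exploiting the fact that $\Sigma$ is closed so that the boundary contributions drop out. Write $\mathcal Z_\X(\Sigma,g')/\mathcal Z_\X(\Sigma,g)$ explicitly from the definition; since $\det'(-\tfrac{1}{2\pi}\Delta_h)=(2\pi)^{-\zeta_0}\det'(-\Delta_h)$ with $\zeta_0$ independent of the metric $h$ in the conformal class, the factors of $(2\pi)^{-\zeta_0}$ cancel in the ratio. Thus one is reduced to analyzing
\[
\frac{\mathcal Z_\X(\Sigma,g')}{\mathcal Z_\X(\Sigma,g)}=\left(\frac{\det'(-\Delta_{g'})/v_{g'}(\Sigma)}{\det'(-\Delta_{g})/v_{g}(\Sigma)}\right)^{-r/2}=\left(\frac{\det(-\Delta_{g'})}{\det(-\Delta_{g})}\right)^{-r/2},
\]
where in the last equality we used the convention $\det(-\Delta_h)=\det'(-\Delta_h)/v_h(\Sigma)$ for the closed Laplacian.

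Next I would apply Lemma~\ref{lemma:background/polyakov formula} in the case $\partial\Sigma=\emptyset$: both the boundary integral against $k_g\varphi$ and the corrective term $\pm\tfrac{1}{8\pi}\int_{\partial\Sigma}\partial_{\vec n_g}\varphi\,\d\lambda_g$ vanish, yielding
\[
\log\frac{\det(-\Delta_{g'})}{\det(-\Delta_g)}=-\frac{1}{48\pi}\int_\Sigma\bigl(|d_g\varphi|_g^2+2R_g\varphi\bigr)\d v_g.
\]
Taking the $(-r/2)$-th power and the logarithm then gives the exponent $\tfrac{r}{96\pi}\int_\Sigma(|d_g\varphi|_g^2+2R_g\varphi)\d v_g$, which is precisely the claimed conformal anomaly.

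There is essentially no obstacle here beyond bookkeeping: the only point to verify carefully is that the normalization by $v_g(\Sigma)$ in the definition of $\mathcal Z_\X$ exactly absorbs the zero-mode issue, i.e.\ that $\det'(-\Delta_h)/v_h(\Sigma)$ is the correct transformation law appearing in Polyakov--Alvarez; this is standard and consistent with the way Lemma~\ref{lemma:background/polyakov formula} is stated (with $\det(-\Delta_g):=\det'(-\Delta_g)/v_g(\Sigma)$ for the closed Laplacian). The factor $-r/2$ in the exponent of $\mathcal Z_\X$ accounts for the $r$ independent scalar components of the vectorial GFF, giving the factor $r$ in the final formula.
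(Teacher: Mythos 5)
Your proof is correct and is exactly the argument the paper has in mind: Proposition~\ref{eq:conformal anomaly X} is stated there as a direct consequence of the Polyakov--Alvarez formula (Lemma~\ref{lemma:background/polyakov formula}) with the boundary terms absent, and your bookkeeping of the $(2\pi)^{-\zeta_0}$ cancellation, the volume normalization, and the exponent $-r/2$ yielding $\frac{r}{96\pi}$ is all accurate.
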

	
	\subsection{Gaussian Multiplicative Chaos} \label{subsec:GMC_closed}
	
	Having given a meaning to the quadratic term that appears in the action~\eqref{eq:toda_action}, we now would like to give a rigorous interpretation of the exponential potential that appears there, \emph{i.e.} define the exponentials $e^{\ps{u,\X^g}} \d v_g$ for $u\in\a$. However, since $\X^g$ is far from being a continuous well-defined function we need to introduce a regularization procedure. This method is at the heart of the theory of Gaussian Multiplicative Chaos --- for details about the theory of GMC, see for instance \cite{RV10, RV14, Ber17}. 
	
	\begin{defdef}
		Given $g$ a smooth Riemannian metric on $\Sigma$ and $x\in\Sigma$, let
		$C_g(x,\eps)\coloneqq\left\{y\in\Sigma, d_g(x,y)=\eps\right\}$ be the geodesic circle centered at $x$ of (small) radius $\eps>0$. We define the regularized field $\X^g_\eps$ at $x$ as the average of the GFF $\X^g$ over $C_g(x,\eps)$.
	\end{defdef}
	Synthetically, $\X^g_\eps(x)=\mu_{g,\eps}[\X_g](x)$ where $\mu_{g,\eps}[\cdot](x)$ is the uniform probability measure on $C_g(x,\eps)$, that is $\int \cdot \d\mu_{g,\eps}(x) = \frac{1}{\lambda_g(C_g(x,\eps))} \int_{C_g(x,\eps)} \cdot \d\lambda_g$. It can be shown~\cite{DS11} that the random variable $\X^g_\eps(x)$ has an almost surely continuous version that we will consider from now on. 
	
	\begin{lemma}\label{lemma:regularization_X}
		The field $\X^g_\eps$ has the following covariance structure:
		\begin{equation}\label{eq:cov_Xeps}
			\E[\ps{u,\X^g_\eps(x)}\ps{v,\X^g_\eps(y)}] = \ps{u,v} \iint G_g \d\mu_{g,\eps}(x) \d\mu_{g,\eps}(y)
		\end{equation}
		for any $u, v \in \a$. Moreover, if $g =e^{\varphi}g_0$ with $g_0$ hyperbolic then as $\eps\to 0$ and uniformly in $x$
		\begin{equation}\label{eq:variance_Xeps}
			\E[\ps{u,\mu_{g,\eps}[\X^{g_0}](x)}^2] = |u|^2\left(-\log \eps + W_{g_0}(x)+\frac12\varphi(x)\right) + o(1)
		\end{equation}
		with $W_{g_0}$ a smooth function on $\Sigma$ independent from $\varphi$.
	\end{lemma}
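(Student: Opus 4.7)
The covariance identity \eqref{eq:cov_Xeps} is an essentially formal computation. Interpreting $\X^g_\eps(x) = \mu_{g,\eps}[\X^g](x)$ as the pairing of the distribution $\X^g$ against the (distributional) density of $\mu_{g,\eps}(x)$, one approximates the latter by smooth mollifiers, invokes the bilinearity of the covariance together with Fubini, and passes to the limit using the definition $\E[\ps{u,\X^g(z)}\ps{v,\X^g(w)}] = \ps{u,v} G_g(z,w)$. The same reasoning, applied to the field $\X^{g_0}$ regularized in the metric $g$, yields
\[
\E[\ps{u,\mu_{g,\eps}[\X^{g_0}](x)}^2] = |u|^2 \iint G_{g_0}(z,w)\,d\mu_{g,\eps}(x)(z)\,d\mu_{g,\eps}(x)(w),
\]
which reduces \eqref{eq:variance_Xeps} to an asymptotic analysis of this double integral.

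For the variance asymptotic the plan is to insert the decomposition provided by Lemma \ref{lemma:estimates_green_function}: near the diagonal $G_{g_0}(z,w) = -\log d_{g_0}(z,w) + f_{g_0}(z,w)$ with $f_{g_0}$ smooth on $\Sigma \times \Sigma$. The smooth part $\iint f_{g_0}\,d\mu_{g,\eps}(x)^{\otimes 2}$ then converges to $f_{g_0}(x,x)$ by continuity, uniformly in $x \in \Sigma$ thanks to compactness; this contribution depends only on $g_0$ and defines the smooth function $W_{g_0}(x) := f_{g_0}(x,x)$. It remains to treat the singular part, for which I would work in $g_0$-normal coordinates centered at $x$: since $g = e^\varphi g_0$ with $\varphi$ smooth, the $g$-geodesic circle $C_g(x,\eps)$ is, to leading order as $\eps \to 0$, the $g_0$-geodesic circle of radius $\tilde\eps := e^{-\varphi(x)/2}\eps$, so that a change of variables yields
\[
\iint \Big(-\log d_{g_0}(z,w)\Big)\,d\mu_{g,\eps}(x)(z)\,d\mu_{g,\eps}(x)(w) = -\log \tilde\eps + o(1) = -\log\eps + \tfrac{1}{2}\varphi(x) + o(1).
\]

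The main technical obstacle is making this leading-order comparison rigorous and ensuring uniformity in $x$. This requires Taylor-expanding $\varphi$ and the distance function $d_{g_0}$ in normal coordinates, controlling the discrepancy between $C_g(x,\eps)$ and the corresponding $g_0$-circle of radius $\tilde\eps$, and checking that the resulting error terms integrate to $o(1)$ against $-\log d_{g_0}$ uniformly over $\Sigma$. Once this uniform comparison is in hand, summing the singular and smooth contributions gives \eqref{eq:variance_Xeps} with $W_{g_0}(x) = f_{g_0}(x,x)$, which is smooth on $\Sigma$ by Lemma \ref{lemma:estimates_green_function} and manifestly independent of $\varphi$.
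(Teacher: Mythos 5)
Your argument is correct and is essentially the computation that the paper delegates to \cite[Lemma 3.2 and Equation (3.10)]{GRV19}: split $G_{g_0}$ into its logarithmic singularity plus the smooth remainder $f_{g_0}$, identify $W_{g_0}(x)=f_{g_0}(x,x)$, and account for the conformal factor through the comparison of the $g$-geodesic circle $C_g(x,\eps)$ with the $g_0$-geodesic circle of radius $e^{-\varphi(x)/2}\eps$. The uniform control of this circle comparison, which you flag as the remaining technical obstacle, is exactly what the cited reference carries out, so nothing essential is missing from your outline.
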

	\begin{proof}
		The first item follows from the definition of $\X_{\eps}^g$. In the case where $\varphi=0$ the second item is a straightforward higher-rank generalization of Lemma 3.2 in~\cite{GRV19}. The case of non-zero $\varphi$ corresponds to~\cite[Equation (3.10)]{GRV19}.
	\end{proof}
	From Lemma~\ref{lemma:regularization_X} we deduce that one needs to rescale the exponential of the regularized field by a power of $\eps$ in order to obtain a finite limit, giving rise to a GMC measure. 
	\begin{defdef}
		Let $g$ be a smooth Riemannian metric on $\Sigma$ and $\X^g_\eps$ be the regularized field introduced above. We define a Radon measure over $\Sigma$ by setting for $\norm{u}^2<4$
		\begin{equation} \label{eq:GMC_X}
			e^{\ps{u,\X^g(x)}} \d v_g(x) := \lim\limits_{\eps\to 0} \eps^{\frac{|u|^2}{2}} e^{\ps{u,X^g_\eps(x)}} \d v_g(x).
		\end{equation}
	\end{defdef}
	
	Thanks to the theory of GMC, as soon as $\norm{u}^2< 4$ the above convergence stands in probability in the space of Radon measures equipped with the weak topology. Moreover, the limiting measure is finite and non-trivial. Note that as written in the action~\eqref{eq:toda_action}, we will only need to consider some particular vectors $u$ in the direction of the simple roots $(e_i)_{1\leq i\leq r}$, that is $u = \gamma e_i$ for $i=1,\cdots,r$. Then $\norm{u}^2=\gamma^2\ps{e_i,e_i}=:\gamma_i^2$. Since the longest roots are such that $\ps{e_i,e_i}=2$ this explains the constraint that $\gamma$ must belong to $(0,\sqrt{2})$. 
	
	The following Lemma is the companion result of Lemma~\ref{lemma:change of metric GFF} for the GMC:
	\begin{lemma} \label{lemma:conformal change GMC}
		Let $g = e^\varphi g_0$ be a metric in the conformal class of the hyperbolic metric $g_0$, and $u$ in $\a$ with $\norm{u^2}<4$. Then the following equality holds in distribution:
		$$
		e^{\ps{u,\X^{g}}} \d v_{g} \eqlaw e^{(1+\frac{|u|^2}{4})\varphi} e^{\ps{u,\X^{g_0}-m_{g}(\X^{g_0})}} \d v_{g_0}.
		$$
		Hence for any $F$ (resp. $f_i$, $G$) positive continuous over $H^{-1}(\Sigma\to\a,g)$ (resp. $\Sigma$, $\R^r$):
		\begin{eqs} \label{eq:change of metric path integral closed}
			&\int_\a \expect{F\left(\X^{g}+\bm c\right)G\left(\int_\Sigma f_i e^{\ps{\gamma e_i,\X^{g}+\bm c}} \d v_{g}\right)_{1\leq i\leq r}}\d\bm c \\
			&=\int_\a \expect{F\left(\X^{g_0}+\bm c\right)G\left(\int_\Sigma f_i e^{\frac{\ps{Q,\gamma e_i}}{2}\varphi} e^{\ps{\gamma e_i,\X^{g_0}+\bm c}} \d v_{g_0}\right)_{1\leq i\leq r}} \d\bm c.
		\end{eqs}
	\end{lemma}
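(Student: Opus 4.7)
The plan is to reduce the $g$-regularized GMC to the $g_0$-regularized one. By Lemma~\ref{lemma:change of metric GFF} we may realize $\X^g\eqlaw\X^{g_0}-m_g(\X^{g_0})$, so that its $g$-circle average equals $\X^g_\eps(x)=\mu_{g,\eps}[\X^{g_0}](x)-m_g(\X^{g_0})$. Substituting this into the definition \eqref{eq:GMC_X}, pulling out the spatially constant factor $e^{-\ps{u,m_g(\X^{g_0})}}$, and writing $\d v_g=e^\varphi\d v_{g_0}$, the claimed equality in law reduces to proving the convergence in probability of Radon measures
\begin{equation*}
\eps^{|u|^2/2}e^{\ps{u,\mu_{g,\eps}[\X^{g_0}](x)}}\d v_{g_0}(x)\;\underset{\eps\to0}{\longrightarrow}\;e^{\frac{|u|^2}{4}\varphi(x)}e^{\ps{u,\X^{g_0}(x)}}\d v_{g_0}(x).
\end{equation*}

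The key input is Lemma~\ref{lemma:regularization_X}: the $g$-circle average of $\X^{g_0}$ has variance $|u|^2(-\log\eps+W_{g_0}(x)+\tfrac12\varphi(x))+o(1)$ uniformly in $x$, to be compared with $|u|^2(-\log\eps+W_{g_0}(x))+o(1)$ for the $g_0$-circle average. The gap $\tfrac12\varphi(x)$ in the residual variances, combined with the standard principle that subcritical GMC is insensitive to the choice of a reasonable mollifier up to a deterministic multiplicative factor given by the exponential of $\tfrac{|u|^2}{2}$ times the residual variance gap, produces precisely the correction $e^{|u|^2\varphi/4}$. Concretely one decomposes $\mu_{g,\eps}[\X^{g_0}]=\mu_{g_0,\eps}[\X^{g_0}]+N_\eps$ with $N_\eps$ jointly Gaussian, computes its covariance via Lemma~\ref{lemma:regularization_X}, and controls the remainder through a direct $L^2$- (or $L^p$-) computation against a smooth test function, valid throughout the subcritical phase $|u|^2<4$.

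The integrated identity \eqref{eq:change of metric path integral closed} then follows by applying this law equality to the $r$ GMC terms jointly (all coupled through the same realization of $\X^{g_0}$), using Fubini-Tonelli to exchange the Gaussian expectation with the integration over $\bm c\in\a$, and performing the translation $\bm c\mapsto\bm c+m_g(\X^{g_0})$ inside the $\bm c$-integral in order to absorb the $-m_g(\X^{g_0})$ shift both in the argument of $F$ and inside each exponential. The resulting prefactor $e^{(1+|\gamma e_i|^2/4)\varphi}$ in the $i$-th integral coincides with $e^{\ps{Q,\gamma e_i}\varphi/2}$: using $Q=\gamma\rho+\tfrac{2}{\gamma}\rho^\vee$ together with $\ps{\rho,e_i}=|e_i|^2/2$ and $\ps{\rho^\vee,e_i}=1$, one computes $\ps{Q,\gamma e_i}/2=\gamma^2|e_i|^2/4+1=1+|\gamma e_i|^2/4$.

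The main obstacle is the displayed convergence: although $\mu_{g,\eps}$ is not the canonical $g_0$-mollification of $\X^{g_0}$, Lemma~\ref{lemma:regularization_X} ensures it differs from $\mu_{g_0,\eps}$ only through a smooth deterministic variance shift, and the GMC comparison principle then produces the announced multiplicative correction. Quantifying this comparison rigorously in the subcritical regime is the technical heart of the argument; the remaining algebraic steps (Fubini, zero-mode translation, identification of the exponent) are routine.
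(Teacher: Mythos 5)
Your proposal is correct and follows essentially the same route as the paper: realize $\X^g\eqlaw\X^{g_0}-m_g(\X^{g_0})$ via Lemma~\ref{lemma:change of metric GFF}, use the variance asymptotics of the $g$-circle average of $\X^{g_0}$ from Lemma~\ref{lemma:regularization_X} to extract the deterministic factor $e^{|u|^2\varphi/4}$, and conclude by the independence of the subcritical GMC limit from the regularization scheme, followed by the zero-mode shift $\c\mapsto\c+m_g(\X^{g_0})$ and the identity $\tfrac12\ps{Q,\gamma e_i}=1+\tfrac{|\gamma e_i|^2}{4}$. The only difference is that where you propose to verify the mollifier-comparison by a direct $L^2$/$L^p$ computation, the paper simply invokes the universality of the Wick-ordered GMC limit from the existing literature.
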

	\begin{proof}
		Thanks to Lemma~\ref{lemma:regularization_X} together with Lemma~\ref{lemma:change of metric GFF} we deduce that 
		\begin{align*}
			e^{\ps{u,\X^g_\eps(x)}}\d v_g&\eqlaw \lim\limits_{\eps\to 0}\eps^{\frac{\norm{u}^2}2}e^{\ps{u,\mu_{g,\eps}\left[\X^{g_0}(x)-m_g(\X^{g_0})\right](x)}}e^{\varphi(x)}\d v_{g_0}(x)\\
			&=e^{-\ps{u,m_g(\X^{g_0})}}e^{\varphi(x)}e^{\frac{|u|^2}{2}\left(W_{g_0}(x)+\frac12\varphi(x)\right)} \lim\limits_{\eps\to 0}e^{\ps{u,\mu_{g,\eps}\left[\X^{g_0}\right](x)} - \frac12 \E[\ps{u,\mu_{g,\eps}\left[\X^{g_0}\right](x)}^2]}  \d v_{g_0}(x).
		\end{align*}
		We conclude since the limiting GMC measure $\lim\limits_{\eps\to 0}e^{\ps{u,\mu_{g,\eps}\left[\X^{g_0}\right](x)} - \frac12 \E[\ps{u,\mu_{g,\eps}\left[\X^{g_0}\right](x)}^2]}  \d v_{g_0}(x)$ is independent from the regularization scheme (see e.g.~\cite{RV10}), so that we can replace $\mu_{g,\eps}\left[\X^{g_0}\right]$ by $\mu_{g_0,\eps}\left[\X^{g_0}\right]$. The case where $u=\gamma e_i$ is handled by using that $\ps{Q,e_i}=\gamma\frac{\norm{e_i}^2}{2}+\frac2\gamma$ and yields the second item by the change of variable $\bm c\to\bm c+m_{g}(\X^{g_0})$. 
	\end{proof}

	\subsection{Probabilistic interpretation of the path integral}
	We can now give a probabilistic interpretation of the path integral of Toda CFTs~\eqref{eq:toda path integral}. Recall that the background charge is $Q=\gamma\rho+\frac2\gamma\rho^\vee$ and that the cosmological constants $\mu_{B,i}$ are taken positive for $1\leq i\leq r$.
	
	\subsubsection{Definition of the Toda field} We first make sense of the partition function and of the Toda field. This is done as follows:
	\begin{defdef}
		Let $g$ be a smooth Riemannian metric on $\Sigma$ and $F$ any bounded continuous functional over $H^{-1}(\Sigma\to\a,g)$. We set
		\begin{equation} \label{eq:path integral X}
			\ps{F}_g \coloneqq \lim\limits_{\eps\to 0} \ps{F}_{g,\eps},\quad\text{where}\quad\ps{F}_{g,\eps}\coloneqq \mathcal{Z}_\X(\Sigma,g) \int_{\a} \E \left[F(\X_\eps^g + \c) e^{-V_g(\X^g_\eps + \c)}\right] \d\c
		\end{equation}
		with the regularized Toda potential given by
		\begin{equation} \label{eq:reg_action_X}
			V_g(\X^g_\eps + \c) = \frac{1}{4\pi} \int_{\Sigma} R_g \ps{Q,\X^g_\eps + \c} dv_g  + \sum_{i=1}^{r}  \mu_{B,i} e^{\ps{\gamma e_i,\c}} \int_{\Sigma} \eps^{\frac{\gamma_i^2}{2}} e^{\ps{\gamma e_i,\X^g_\eps}} \d v_g.
		\end{equation}
	\end{defdef}
	
	The following statement ensures that the Toda path integral gives rise to a well-defined positive measure on $H^{-1}(\Sigma\to\a,g)$:
	\begin{prop} \label{prop:convergence partition fct X}
		Assume that $\chi(\Sigma)<0$ and take $g$ any smooth Riemannian metric on $\Sigma$. Let $F$ be any continuous, bounded functional over $H^{-1}(\Sigma\to\a,g)$. Then the limit~\eqref{eq:path integral X} exists and belongs to $(0,\infty)$. Moreover the limiting partition function satisfies a Weyl anomaly of the form~\eqref{eq:weyl} and is invariant under orientation-preserving diffeomorphisms~\eqref{eq:diffeo invariance}.
	\end{prop}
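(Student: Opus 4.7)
The plan is to reduce the existence claim to a standard zero-mode integrability problem, and to derive the invariance properties from the three change-of-metric results of Subsections~\ref{sec:GFF closed} and~\ref{subsec:GMC_closed}. Using Gauss--Bonnet~\eqref{eq:gauss-bonnet} with $\partial\Sigma=\emptyset$,
\begin{equation*}
\frac{1}{4\pi}\int_\Sigma R_g\ps{Q,\X^g_\eps+\c}\,\d v_g=\chi(\Sigma)\ps{Q,\c}+\frac{1}{4\pi}\int_\Sigma R_g\ps{Q,\X^g_\eps}\,\d v_g,
\end{equation*}
and the second summand is a centered Gaussian of variance bounded uniformly in $\eps$. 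By Cameron--Martin, its exponential is absorbed into a shift of $\X^g_\eps$ by a smooth deterministic function $h_\eps\to h$ on $\Sigma$, at the cost of an $\eps$-independent multiplicative constant $C_g$. The problem is then to prove that
\begin{equation*}
\mathcal{Z}_\X(\Sigma,g)\,C_g\int_\a e^{-\chi(\Sigma)\ps{Q,\c}}\,\E\Big[\tilde F_\eps(\X^g_\eps+\c)\exp\Big(-\sum_{i=1}^r\mu_{B,i}\,e^{\ps{\gamma e_i,\c}}\tilde M_{i,\eps}\Big)\Big]\d\c
\end{equation*}
admits a finite, strictly positive limit, where $\tilde M_{i,\eps}$ differs from the regularized GMC mass~\eqref{eq:GMC_X} only by a bounded deterministic factor and $\tilde F_\eps$ is bounded continuous.

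To handle the $\c$-integral uniformly in $\eps$ I would parametrize $\c=\sum_{i=1}^r t_i\omega_i^\vee$ in the coroot basis, so that $\ps{\gamma e_i,\c}=\gamma t_i$ and $\ps{Q,\c}=\sum_i t_i\ps{Q,\omega_i^\vee}$. Since $\rho$ and $\rho^\vee$ both lie in the positive Weyl chamber, every $\ps{Q,\omega_i^\vee}$ is strictly positive, and the hypothesis $\chi(\Sigma)<0$ makes $e^{-\chi(\Sigma)\ps{Q,\c}}$ decay as any $t_i\to-\infty$ while growing only polynomially in $e^{t_i}$ as $t_i\to+\infty$. This positive-direction growth is killed by the double-exponential decay of $\exp(-\mu_{B,i}e^{\gamma t_i}\tilde M_{i,\eps})$: using the elementary bound $e^{-x}\le C_s x^{-s}$ together with the standard uniform-in-$\eps$ estimate $\E[\tilde M_{i,\eps}^{-s}]<\infty$ for every $s>0$, choosing $s$ large enough produces an $\eps$-independent integrable majorant on all of $\a$. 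Convergence in probability of the GMC measures~\eqref{eq:GMC_X}, continuity of $F$ on $H^{-1}(\Sigma\to\a,g)$, and dominated convergence then yield the existence of $\ps{F}_g$. Strict positivity is immediate since the integrand is almost surely strictly positive on sets of positive $\d\c$-measure.

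For the Weyl anomaly~\eqref{eq:weyl} under $g'=e^\varphi g$, I would combine Proposition~\ref{eq:conformal anomaly X} for $\mathcal{Z}_\X$, the distributional identity $\X^{g'}\eqlaw\X^g-m_{g'}(\X^g)$ from Lemma~\ref{lemma:change of metric GFF}, and the Weyl factor $\exp\big(\tfrac{\ps{Q,\gamma e_i}}{2}\varphi\big)$ gained by every regularized GMC measure via Lemma~\ref{lemma:conformal change GMC}. The curvature-linear contribution is handled by rewriting $R_{g'}\d v_{g'}=(R_g-\Delta_g\varphi)\d v_g$ and observing that the additional Gaussian $\frac{1}{4\pi}\int\Delta_g\varphi\ps{Q,\X^g_\eps}\d v_g$ has, by integration by parts against the Green's function at the regularized level, variance $\frac{|Q|^2}{8\pi}\int|d_g\varphi|_g^2\d v_g$; combined with the shift $-m_{g'}(\X^g)$ and a translation in the $\c$-variable, this produces exactly the extra Polyakov--Alvarez contribution $\frac{6|Q|^2}{96\pi}\int(|d_g\varphi|_g^2+2R_g\varphi)\d v_g$. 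Summing with the $\mathcal{Z}_\X$ contribution one reads off the central charge $r+6|Q|^2$. Diffeomorphism invariance~\eqref{eq:diffeo invariance} follows at once from the intrinsic character of every object involved: $G_{\psi^*g}(x,y)=G_g(\psi(x),\psi(y))$ implies $\X^{\psi^*g}\eqlaw\X^g\circ\psi$, while $\d v_{\psi^*g}=\psi^*\d v_g$ and invariance of $\mathcal{Z}_\X$ close the argument.

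The main obstacle will be the Weyl anomaly: tracking the Cameron--Martin correction from the shift $-m_{g'}(\X^g)$ through the curvature-linear term and verifying that the $6|Q|^2$ part of the central charge arises exactly from the variance bookkeeping requires careful manipulation at the regularized level before one can safely pass to the limit $\eps\to 0$; the integrability step in the $\c$-variable is more routine once the negative-moment bound on the GMC mass is invoked.
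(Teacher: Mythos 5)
Your proposal is correct and follows essentially the same route as the paper, which proves this proposition in Appendix~\ref{sec:appendix_proof} as a special case of Theorems~\ref{thm:existence} and~\ref{thm:axioms}: reduction of existence to zero-mode integrability via negative moments of the GMC masses, Girsanov on the curvature-linear term with variance $\frac{|Q|^2}{8\pi}\int_\Sigma|d_{g}\varphi|_g^2\,\d v_g$ for the Weyl anomaly, and naturality of the Green's function for diffeomorphism invariance. The only cosmetic difference is that you treat a general smooth metric directly by a Cameron--Martin shift of the curvature term, whereas the paper first reduces to the uniform metric (where that term vanishes since $m_{g_0}(\X^{g_0})=0$) and then extends via the Weyl anomaly.
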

	See Theorems~\ref{thm:existence} and~\ref{thm:axioms} for a more precise statement. We defer the proof of this statement to Appendix~\ref{sec:appendix_proof}. As an immediate consequence of this result we have the following:
	\begin{coro}
		The assignment of $F \mapsto \ps{F}_g$ for $F$ a bounded continuous functional over $H^{-1}(\Sigma\to\a,g)$ defines a unique Borel measure over $H^{-1}(\Sigma\to\a,g)$.
	\end{coro}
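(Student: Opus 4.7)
The plan is to construct the measure $\mu_g$ explicitly from the path integral formula and then invoke the standard fact that on a Polish space a finite Borel measure is determined by its integrals against bounded continuous functions.

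First I would observe that $H^{-1}(\Sigma\to\a,g)$, being a separable Hilbert space, is Polish; the functional $F\mapsto\ps{F}_g$ is positive and linear on $C_b(H^{-1}(\Sigma\to\a,g))$, these properties being preserved under the limit defining $\ps{F}_g$ in \eqref{eq:path integral X}. Using the GMC convergence of Subsection~\ref{subsec:GMC_closed}, the regularized potential $V_g(\X^g_\eps+\c)$ admits an almost sure limit $V_g(\X^g+\c)$ obtained by replacing each $\eps^{\gamma_i^2/2}e^{\ps{\gamma e_i,\X^g_\eps}}\d v_g$ with its limiting GMC measure $e^{\ps{\gamma e_i,\X^g}}\d v_g$, and the argument underlying Proposition~\ref{prop:convergence partition fct X} provides enough uniform integrability to exchange limit and expectation, yielding
$$\ps{F}_g = \mathcal Z_\X(\Sigma,g)\int_\a \E\big[F(\X^g+\c)e^{-V_g(\X^g+\c)}\big]\d\c.$$

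I would then define, for every Borel subset $A\subset H^{-1}(\Sigma\to\a,g)$,
$$\mu_g(A)\coloneqq \mathcal Z_\X(\Sigma,g)\int_\a \E\big[\mathbf 1_A(\X^g+\c)e^{-V_g(\X^g+\c)}\big]\d\c.$$
Non-negativity is immediate, $\sigma$-additivity follows from two successive applications of monotone convergence (in $\E$ and in the Lebesgue integral over $\a$), and finiteness $\mu_g(H^{-1}(\Sigma\to\a,g))=\ps{1}_g<\infty$ is Proposition~\ref{prop:convergence partition fct X} applied to $F\equiv 1$. By the displayed formula above, $\int F\,\d\mu_g=\ps{F}_g$ for every bounded continuous $F$. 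Uniqueness is then a classical consequence of Polishness: two finite Borel measures agreeing on $C_b$ also agree on closed sets via a Portmanteau-type approximation by bounded Lipschitz functions, hence on the whole Borel $\sigma$-algebra.

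The only delicate point worth verifying carefully is the joint Borel measurability of the integrand in $(\omega,\c)$, and in particular of the random variable $V_g(\X^g+\c)$: this follows from the $H^{-1}$-continuity of the translation $\c\mapsto \X^g(\omega)+\c$ together with the fact that each GMC measure $e^{\ps{\gamma e_i,\X^g}}\d v_g$ is a genuine random Radon measure, hence measurable as a function of $\omega$, so that Fubini legitimates the definition of $\mu_g$.
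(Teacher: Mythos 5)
Your proposal is correct and fleshes out exactly what the paper leaves implicit: the corollary is stated there as an ``immediate consequence'' of Proposition~\ref{prop:convergence partition fct X} with no written proof, and the intended argument is precisely to realize $\ps{F}_g$ as integration against the explicit measure $\mathcal Z_\X(\Sigma,g)\int_\a \E[\mathbf 1_\cdot(\X^g+\c)e^{-V_g(\X^g+\c)}]\d\c$ and to invoke the fact that a finite Borel measure on a metric space is determined by its integrals of bounded continuous functions. Your choice to construct the measure directly (rather than appealing to a Riesz-type representation of positive functionals on $C_b$ of a Polish space, which would require a separate tightness argument) is the right one, and the limit--expectation exchange you use is exactly the content of the appendix proof of the proposition.
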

	\begin{rem}
		This naturally defines a probability measure on $H^{-1}(\Sigma\to\a,g)$ equipped with its Borel $\sigma$-algebra by considering the mapping $F \mapsto \frac{\ps{F}_g}{\ps{1}_g}$.
	\end{rem}

	\begin{rem}
		In the case where $\chi(\Sigma) \ge 0$ the partition function is infinite because of the behavior of the integral in the zero-mode $\bm c$ when $\ps{\c,e_i} \to -\infty$. We can still define the measure by $F \mapsto \ps{F}_g$ for suitable functionals, although it will have infinite total mass.
	\end{rem}

	\subsubsection{Correlation functions of Vertex Operators}
	Key observables in Toda theory are the Vertex Operators, formal exponentials of the random distribution $\Phi$ under the law described by the path integral \eqref{eq:toda path integral}: $\ps{e^{\ps{\alpha, \Phi(z)}}}_g$ for some $\alpha\in\a$ and $z\in\Sigma$. Averaging products of Vertex Operators with respect to the law defined by the path integral amounts to defining their correlation functions: we study here the existence and some properties of such objects.
	
	\begin{defdef}
		Consider a family of $N \in \N$ pairs $(x_k,\alpha_k)_{k=1,...,N}$ with $x_k \in \Sigma$ all distinct and $\alpha_k \in \a$. The regularized Vertex Operator at $x_k$ with weight $\alpha_k$ is defined by
		\begin{equation} \label{eq:reg VO definition bulk X}
			V_{\alpha_k,\eps}(x_k) \coloneqq \eps^{\frac{|\alpha_k|^2}{2}} e^{\ps{\alpha_k,\X_\eps^g(x_k) + \c}}.
		\end{equation}
		The correlation function of Vertex Operators is then, if it makes sense, the limit
		\begin{equation} \label{eq:limit correl X}
			\begin{split}
				& \ps{\prod_{k=1}^NV_{\alpha_k}(x_k)}_g \coloneqq \lim\limits_{\eps\to 0} \ps{\prod_{k=1}^NV_{\alpha_k,\eps}(x_k)}_{g,\eps},\quad\text{where}\\
				&\ps{\prod_{k=1}^NV_{\alpha_k,\eps}(x_k)}_{g,\eps} = \mathcal{Z}_\X(\Sigma,g) \int_{\a} \E \left[\ps{\prod_{k=1}^NV_{\alpha_k,\eps}(x_k)}_{g,\eps}e^{-V_g(\X^g_\eps + \c)}\right] \d\c.
			\end{split}
		\end{equation}
	\end{defdef}   
	We prove in Appendix~\ref{sec:appendix_proof} that it is indeed the case under the Seiberg bounds:
	\begin{thm}\label{thm:seiberg}
		For $g$ a smooth Riemannian metric on $\Sigma$, the limit \eqref{eq:limit correl X} exists and belongs to $(0,\infty)$ if and only if the Seiberg bounds~\eqref{eq:Seiberg} hold. The limiting correlation function verifies the Weyl anomaly~\eqref{eq:weyl} and is invariant under orientation-preserving diffeormorphisms~\eqref{eq:diffeo invariance}.
	\end{thm}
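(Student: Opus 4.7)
The plan is to reduce the analysis of~\eqref{eq:limit correl X} to a finite-dimensional integral over the zero mode $\c\in\a$ of a GMC expectation, from which the two Seiberg bounds~\eqref{eq:Seiberg} will emerge as sharp convergence conditions at the two extremes of the integration domain. The starting point is a Girsanov (Cameron--Martin) shift applied to the Gaussian integrand: using Lemma~\ref{lemma:regularization_X}, the variance of $\ps{\alpha_k,\X^g_\eps(x_k)}$ is $|\alpha_k|^2(-\log\eps+W_g(x_k))+o(1)$, so the regularizing factor $\eps^{|\alpha_k|^2/2}$ in $V_{\alpha_k,\eps}(x_k)$ compensates the on-diagonal divergence, and absorbing each $e^{\ps{\alpha_k,\X^g_\eps(x_k)}}$ into the law of $\X^g$ amounts to tilting the field by the deterministic function $H(x):=\sum_{k=1}^N\alpha_k G_g(x,x_k)$. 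Combining this with the Gauss--Bonnet identity for the linear (curvature) term yields, as $\eps\to 0$,
\begin{equation*}
    C_g(\boldsymbol x,\boldsymbol\alpha)\,\int_\a e^{\ps{\sum_k\alpha_k-Q\chi(\Sigma),\c}}\,\E\Big[\exp\Big(-\sum_{i=1}^r\mu_{B,i}e^{\ps{\gamma e_i,\c}}Z_i\Big)\Big]\,\d\c,
\end{equation*}
where $C_g$ gathers the deterministic constants $\prod_{k<l}e^{\ps{\alpha_k,\alpha_l}G_g(x_k,x_l)}\prod_k e^{|\alpha_k|^2 W_g(x_k)/2}$, and each $Z_i=\int_\Sigma e^{\ps{\gamma e_i,H(x)}}e^{\ps{\gamma e_i,\X^g(x)}}\d v_g$ is a GMC integrated against the power-law weights $\prod_k|x-x_k|^{-\gamma\ps{\alpha_k,e_i}}$ produced by the shift.

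Next, I change variables in the zero mode via the dual basis $(\omega_i^\vee)$, writing $\c=\sum_i c_i\omega_i^\vee$ so that $\ps{\gamma e_i,\c}=\gamma c_i$ and $\ps{\sum_k\alpha_k-Q\chi(\Sigma),\c}=\sum_i s_ic_i$ with $s_i:=\ps{\sum_k\alpha_k-Q\chi(\Sigma),\omega_i^\vee}$. The substitution $u_i=e^{\gamma c_i}$ then transforms the $\c$-integral, up to a Jacobian constant, into $\int_{(0,\infty)^r}\prod_i u_i^{s_i/\gamma-1}\,\E[e^{-\sum_i\mu_{B,i}u_iZ_i}]\,\prod_i\d u_i$. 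Convergence at $u_i\to 0$ is equivalent, for each $i$, to $s_i>0$, which is the first Seiberg bound; convergence at $u_i\to\infty$ follows from the strict positivity of $Z_i$, which holds almost surely by standard non-degeneracy of GMC. The remaining requirement is finiteness of each $Z_i$, and this is a KPZ-type threshold: near a puncture $x_k$ the integrand behaves like $|x-x_k|^{-\gamma\ps{\alpha_k,e_i}}$ against a typical GMC mass of order $r^{\gamma_i^2/2+2}=r^{\gamma\ps{Q,e_i}}$, so $Z_i<\infty$ almost surely if and only if $\gamma\ps{\alpha_k,e_i}<\gamma\ps{Q,e_i}$ for every $k$, i.e., $\ps{\alpha_k-Q,e_i}<0$, which is the second Seiberg bound. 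Conversely, violating either bound makes the corresponding endpoint produce an integrand of infinite mass, forcing the limit to be $0$ or $+\infty$ by monotone convergence, which establishes the necessity of~\eqref{eq:Seiberg}.

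For the Weyl anomaly under $g'=e^\varphi g$, I combine three covariance statements available in the text: Lemma~\ref{lemma:change of metric GFF} for the law of $\X^{g'}$, Lemma~\ref{lemma:conformal change GMC} for the multiplicative factor $e^{\frac{\ps{Q,\gamma e_i}}{2}\varphi}$ on each GMC integrand, and Lemma~\ref{lemma:background/polyakov formula} for the ratio $\mathcal Z_\X(\Sigma,g')/\mathcal Z_\X(\Sigma,g)$, which contributes the $\frac{r}{96\pi}\int(|d_g\varphi|_g^2+2R_g\varphi)\d v_g$ coming from the $r$ real GFF components. Each Vertex Operator picks up the factor $e^{-\Delta_{\alpha_k}\varphi(x_k)}$ because the variance correction $\tfrac12\varphi(x_k)$ in Lemma~\ref{lemma:regularization_X} combines with the $\eps^{|\alpha_k|^2/2}$ normalization and with the linear shift produced by the change $R_{g'}=e^{-\varphi}(R_g-\Delta_g\varphi)$; the mean-zero shift $\X^{g'}\eqlaw\X^g-m_{g'}(\X^g)$ is absorbed into a translation of $\c$, which accounts for the $F(\Phi-\tfrac Q2\varphi)$ substitution after inspection of the curvature integral. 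Summing the resulting exponentials reproduces exactly the constant $(r+6|Q|^2)/96\pi$ in front of the Liouville-type action in~\eqref{eq:weyl}. Diffeomorphism invariance~\eqref{eq:diffeo invariance} is then immediate from the invariance in law of $\X^g$, of $G_g$, and of the regularized GMC measures under the pullback $\psi^*g$, together with the invariance of the regularized determinants.

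The principal technical obstacle is justifying the exchange of the limit $\eps\to 0$ with the expectation and the $\c$-integral. The $u_i\to 0$ regime (equivalently, $c_i\to-\infty$) is exactly where the asymptotic integrability is marginal, so one cannot invoke dominated convergence with a $\c$-independent bound. The standard route is to split the $\c$-integration into a compact region, where the almost sure convergence $Z_i^\eps\to Z_i$ and the continuity of the exponential functional give the limit by bounded convergence, and its complement, where one relies on the explicit large-$u$ decay of $\E[\exp(-uZ)]$ inherited from polynomial moments of GMC together with the double-exponential decay $e^{-\mu_{B,i}e^{\gamma c_i}Z_i}$ at $c_i\to+\infty$, and on monotone convergence in $\eps$ at $c_i\to-\infty$. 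Once this uniformity is set up the computation proceeds along the lines of the sphere case~\cite{CRV21}, with the sphere Green's function replaced by $G_g$ and with no structural modification to the algebraic identities.
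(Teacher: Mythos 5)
Your proposal follows essentially the same route as the paper's proof: a Girsanov shift to absorb the vertex operators, reduction of the correlation function to a zero-mode integral of a GMC Laplace functional whose behaviour at the two ends of the $\c$-domain produces the two Seiberg bounds, a second Girsanov shift on the curvature term combined with the Polyakov--Alvarez formula for the Weyl anomaly, and invariance of all the building blocks for diffeomorphism covariance. The only cosmetic difference is your explicit substitution $u_i=e^{\gamma c_i}$ in the dual basis $(\omega_i^\vee)$; just note that the required decay of $\E[e^{-u_iZ_i}]$ as $u_i\to\infty$ rests on finiteness of the negative moments of the GMC masses $Z_i$, not merely on their almost sure strict positivity.
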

	
	\begin{rem}
		If the Seiberg bounds hold the map $F \mapsto \ps{F\prod_{k=1}^NV_{\alpha_k}(x_k)}_g/\ps{\prod_{k=1}^NV_{\alpha_k}(x_k)}_g$ defines a probability measure on $H^{-1}(\Sigma\to\a,g)$ equipped with its Borel $\sigma$-algebra.
	\end{rem}
	
	\section{Boundary Toda theories}\label{sec:bdry}
	Having defined Toda CFTs when the underlying surface is a closed hyperbolic Riemann surface, we now turn to the case where the compact surface $\Sigma$ has a non-empty boundary. As explained in the introduction a major difference is that the presence of a boundary implies that one needs to specify boundary conditions for the field $\bphi$ in the Toda action, thus giving rise to several possible constructions for the theory with a boundary. These boundary conditions correspond to automorphisms of the underlying root system $\mc R$.
	
	In this section we assume that $\Sigma$ has a non-empty boundary, is of hyperbolic type and is equipped with a smooth Riemannian metric $g$. We also fix an element $\tau$ of Out($\mc R$): as explained in Subsections~\ref{subsec:out} and~\ref{subsec:fold} this allows one to write $\a$ as the orthogonal sum $\a=\a_N\bigoplus\a_D$ with 
	the corresponding orthogonal projections denoted $p_N$ and $p_D$. We will also frequently use the notation $u^*$ for $\tau(u)$ where $u$ is in $\a$.
	
	\subsection{The Gaussian Free Fields considered}
	
	\subsubsection{Neumann and Dirichlet Gaussian Free Fields}
	In the same way as in Section~\ref{sec:GFF closed}, it is possible to define a Gaussian field with covariance structure given by either the Neumann or Dirichlet Green's function introduced respectively in Equations~\eqref{eq:Neumann_pb_green} and~\eqref{eq:dirich_pb_green}. 
	\begin{defdef} \label{def:neumann and dirichlet gff}
		The (vectorial) Neumann GFF $\X^{g,N}$ is the centered Gaussian field with covariance structure given for all $u,v\in\a$ and all $x\neq y\in\overline{\Sigma}$ by
		$$
		\E[\ps{u,\X^{g,N}(x)}\ps{v,\X^{g,N}(y)}] = \ps{u,v}G^{N}_g(x,y)
		$$
		where we used the same abuse of notation as when defining the closed GFF. Likewise the Dirichlet GFF $\X^{g,D}$ is centered with covariance  for all $u,v\in\a$ and all $x\neq y\in\overline{\Sigma}$:
		$$
		\E[\ps{u,\X^{g,D}(x)}\ps{v,\X^{g,D}(y)}] = \ps{u,v}G^{D}_g(x,y).
		$$
	\end{defdef}
	Due to the presence of the zero-mode, to make sense of the path integral the law of the Neumann GFF has to be tensorized with the Lebesgue measure just as in the closed case. We will take this into account when defining the partition function. 
	
	Like before we can see these fields as vectorial GFFs, e.g. $\X^{g,N} = \sum_{i=1}^r X^{g,N}_i\bm v_i$ where $(\bm v_i)_{1\leq i\leq r}$ is an orthonormal basis of $\a$ and with the $X^{g,N}_i$ independent 1-dimensional Neumann GFFs on $\Sigma$ (and likewise for $\X^{g,D}$). This allows to define their partition function:
	\begin{defdef}
		Let $g$ be a smooth Riemannian metric over $\Sigma$, $\Delta^N_g$ be the one-dimensional Neumann Laplacian on $\Sigma$ and $\Delta_g^D$ be the one-dimensional Dirichlet Laplacian on $\Sigma$. We set:
		\begin{equation} \label{eq:partition function Y}
			\mathcal{Z}_N(\Sigma,g) := \left(\frac{\det{'}\left(-\frac{1}{2\pi}\Delta_g^N\right)}{v_g(\Sigma)}\right)^{-\frac r2}\exp\left(-\frac{r}{8\pi}\int_{\partial\Sigma}k_g \d \lambda_g\right).
		\end{equation}
		Likewise the partition function of the Dirichlet field is given by a regularized determinant: 
		\begin{equation}
			\mathcal{Z}_{D}(\Sigma,g) \coloneqq \det\left(-\frac{1}{2\pi}\Delta_g^D\right)^{-\frac r2}\exp\left(\frac{r}{8\pi} \int_{\partial\Sigma} k_g \d\lambda_g\right).
		\end{equation}
	\end{defdef}
	The terms $\exp\left(\pm\frac{r}{8\pi}\int_{\partial\Sigma}k_g \d \lambda_g\right)$ are needed for the conformal anomaly (see Proposition~\ref{prop:conformal anomaly boundary}). 
	
	\subsubsection{Gaussian Free Field with Cardy boundary conditions}
	
	Lemma~\ref{lemma:decomposition_green_function} suggests an alternative way of defining the Neumann and Dirichlet GFFs. Indeed, let $g_0$ be a uniform metric of type 1 on $\Sigma$: one can double the surface $\Sigma$ to obtain a closed surface $\hat{\Sigma}$ equipped with a smooth metric $\hat{g_0}$. Now let $\X^{\hat{g_0}}$ be the vectorial GFF on $\hat{\Sigma}$. Then the fields
	\begin{equation} \label{eq:2D Neumann GFF}
		\X^{g_0,N} \coloneqq \frac{\X^{\hat{g_0}} + \X^{\hat{g_0}}\circ\sigma}{\sqrt{2}} \qt{and} \X^{g_0,D} \coloneqq \frac{\X^{\hat{g_0}} - \X^{\hat{g_0}}\circ\sigma}{\sqrt{2}}
	\end{equation}
	(where $\sigma$ is the canonical involution) have respectively the law of the Neumann and Dirichlet GFFs on $\Sigma$ (in virtue of Lemma~\ref{lemma:decomposition_green_function}). Based on this observation we wish to define a new field $\X^{g_0,C}$ over $\Sigma$ based on the doubling trick discussed in the introduction. Namely $\X^{g_0,C}$ would be such that $\X^{\hat g_0,C}$ and $(\X^{\hat g_0,C})^*\circ\sigma$ have the same law, where $\cdot^*$ is an order two outer automorphism of the underlying root system and $\X^{\hat g_0,C}$ is the extension of $\X^{g_0,C}$ to $\hat\Sigma$ defined using $\sigma$. To this end in the metric $g_0$ we define a new field on $\bar{\Sigma}$ via
	\begin{equation} \label{eq:2D Cardy GFF}
		\X^{g_0,C} = \frac{\X^{\hat{g_0}} + (\X^{\hat{g_0}})^*\circ\sigma}{\sqrt{2}}\cdot
	\end{equation}
	Some key remarks have to be made at this stage:
	\begin{enumerate}
		\item a straightforward consequence of Equation~\eqref{eq:2D Cardy GFF} is that $\X^{\hat{g_0},C}=(\X^{\hat{g_0},C})^*\circ\sigma$;
		\item the field $\X^{g_0,C}$ has the following covariance structure for $x,y\in\bar{\Sigma}$ and $u,v\in\a$:
		\begin{equation} \label{eq: cov structure Z}
			\E[\ps{u,\X^{g_0,C}(x)}\ps{v,\X^{g_0,C}(y)}] = \ps{u,v}G_{\hat{g_0}}(x,y) + \ps{u,v^*}G_{\hat{g_0}}(x,\sigma(y));
		\end{equation}
		\item let us decompose $\X^{g_0,C}$ in $\a=\a_N\bigoplus\a_D$ (recall Subsection~\ref{subsec:out}) by writing
		\begin{equation} \label{eq:decomposition Cardy GFF}
			\X^{g_0,C}=\X^{g_0,N}+\X^{g_0,D},\quad\X^{g_0,N}\coloneqq\frac{\X^{g_0,C}+(\X^{g_0,C})^*}{2}\quad\text{and}\quad \X^{g_0,D}\coloneqq\frac{\X^{g_0,C}-(\X^{g_0,C})^*}{2}\cdot
		\end{equation}
		Then $\X^{g_0,N}$ (resp. $\X^{g_0,D}$) has the law of a Neumann (resp. Dirichlet) GFF from $\Sigma$ to $\a_N$ (resp. $\a_D$). The two fields are independent. Besides, we can rewrite \eqref{eq: cov structure Z} as 
		\begin{equation} 
			\E[\ps{u,\X^{g_0,C}(x)}\ps{v,\X^{g_0,C}(y)}] = \ps{u,\frac{v+v^*}2}G_{\hat{g_0}}^N(x,y) + \ps{u,\frac{v-v^*}2}G_{\hat{g_0}}^D(x,\sigma(y)).
		\end{equation}
	\end{enumerate}
	This construction is limited to the uniform metric $g_0$ and to outer automorphisms of order two. However, these observations suggest the following definition for the Cardy GFF:
	\begin{defdef}
		Given a smooth Riemannian metric $g$ on $\Sigma$ and an automorphism $\tau$ of $\mc R$, the (vectorial) Cardy GFF $\X^{g,C}$ is the centered Gaussian field with covariance structure given for all $u,v\in\a$ and all $x\neq y\in\overline{\Sigma}$ by
		\begin{equation}\label{eq:GFF_Cardy}
			\E[\ps{u,\X^{g,C}(x)}\ps{v,\X^{g,C}(y)}] = \ps{p_Nu,p_Nv}G^{N}_g(x,y) + \ps{p_Du,p_Dv}G^D_g(x,y).
		\end{equation}
	\end{defdef}     
	For such a GFF $\X^{g,C}$ we have a decomposition similar to Equation~\eqref{eq:decomposition Cardy GFF}, which allows to define its partition function: set $d_N\coloneqq \dim(\a_N)$ and $d_D\coloneqq \dim(\a_D)$ so that in particular $d_N+d_D=r$ --- recall that the values of these quantities are explicit and given in Subsection~\ref{subsec:out}. 
	\begin{defdef}
		Let $g$ be a smooth Riemannian metric on $\Sigma$. We set
		\begin{equation}
			\mathcal{Z}_{C}(\Sigma,g) \coloneqq  \left(\frac{\det{'}\left(-\frac{1}{2\pi}\Delta_g^N\right)}{v_g(\Sigma)}\right)^{-\frac{d_N}2}\det\left(-\frac{1}{2\pi}\Delta_g^D\right)^{-\frac{d_D}2} \exp\left(\frac{d_D-d_N}{8\pi} \int_{\partial\Sigma} k_g \d\lambda_g\right).
		\end{equation}
	\end{defdef}
	This formula agrees with~\eqref{eq:partition function Y} when the outer automorphism $\tau$ is the identity (for which $d_N=r$ and $d_D=0$). Via Lemma~\ref{lemma:background/polyakov formula} it satisfies the following conformal anomaly formula:
	\begin{prop}\label{prop:conformal anomaly boundary}
		Let $g' = e^\varphi g$ be a smooth Riemannian metric in the conformal class of $g$. Then we have
		\begin{equation} 
			\mathcal{Z}_{C}(\Sigma,g') = \mathcal{Z}_{C}(\Sigma,g) \exp\left(\frac{r}{96\pi} \left( \int_{\Sigma} (|d_g\varphi|_g^2 + 2R_g\varphi) \d v_g + 4\int_{\partial\Sigma} k_g \varphi \d\lambda_g\right)\right).
		\end{equation}
	\end{prop}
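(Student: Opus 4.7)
The plan is to prove this Weyl anomaly by a direct computation, reducing everything to Lemma~\ref{lemma:background/polyakov formula} (Polyakov-Alvarez) applied to both the Neumann and Dirichlet Laplacians, together with the conformal transformation formulas~\eqref{eq:conf change curvature} for the geodesic curvature and boundary arc length.

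First I would take the logarithm of $\mathcal{Z}_C(\Sigma,g')/\mathcal{Z}_C(\Sigma,g)$. The prefactor $(2\pi)^{-\zeta_0}$ relating $\det'(-\tfrac{1}{2\pi}\Delta_g^*)$ to $\det'(-\Delta_g^*)$ is metric-independent (Subsection~\ref{subsec:determinants}) and cancels in the ratio, reducing the quantity of interest to
\begin{equation*}
-\frac{d_N}{2}\log\frac{\det(-\Delta_{g'}^N)}{\det(-\Delta_g^N)} - \frac{d_D}{2}\log\frac{\det(-\Delta_{g'}^D)}{\det(-\Delta_g^D)} + \frac{d_D-d_N}{8\pi}\left(\int_{\partial\Sigma}k_{g'}\d\lambda_{g'} - \int_{\partial\Sigma}k_g\d\lambda_g\right).
\end{equation*}
Setting $A\coloneqq \int_\Sigma (|d_g\varphi|_g^2 + 2R_g\varphi)\d v_g + 4\int_{\partial\Sigma}k_g\varphi\d\lambda_g$ and $B\coloneqq \int_{\partial\Sigma}\partial_{\vec{n}_g}\varphi\d\lambda_g$, Lemma~\ref{lemma:background/polyakov formula} then tells me that each determinant ratio contributes $-A/(48\pi)$ plus a $\pm B/(8\pi)$ boundary correction with opposite signs for the Neumann and Dirichlet cases. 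Summing them produces $(d_N+d_D)A/(96\pi)=rA/(96\pi)$ plus a residual term linear in $B$.

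It remains to handle the third contribution. Using $k_{g'} = e^{-\varphi/2}(k_g + \tfrac12\partial_{\vec{n}_g}\varphi)$ and $\d\lambda_{g'} = e^{\varphi/2}\d\lambda_g$ from~\eqref{eq:conf change curvature} one finds $\int_{\partial\Sigma}k_{g'}\d\lambda_{g'} - \int_{\partial\Sigma} k_g \d\lambda_g = B/2$. The coefficient $(d_D-d_N)/(8\pi)$ placed in front of the $k_g$ integral in the definition of $\mathcal{Z}_C$ was calibrated precisely so that, after adding all three contributions, the $B$-proportional terms coming from the two determinants and from this boundary correction cancel exactly, leaving only the desired $rA/(96\pi)$. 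The computation is otherwise elementary; the only delicate point is the careful sign bookkeeping needed to verify this cancellation, which simultaneously explains the specific choice of constants appearing in the definitions of $\mathcal{Z}_N$, $\mathcal{Z}_D$ and $\mathcal{Z}_C$.
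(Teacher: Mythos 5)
Your proposal is correct and follows essentially the same route as the paper's proof: apply Lemma~\ref{lemma:background/polyakov formula} separately to the Neumann and Dirichlet determinants so that the bulk contributions sum to $\frac{r}{96\pi}A$, then check that the residual terms proportional to $\int_{\partial\Sigma}\partial_{\vec{n}_g}\varphi\,\d\lambda_g$ cancel against the variation of the prefactor $\exp\big(\frac{d_D-d_N}{8\pi}\int_{\partial\Sigma}k_g\,\d\lambda_g\big)$ via the identity $\int_{\partial\Sigma}k_{g'}\,\d\lambda_{g'}-\int_{\partial\Sigma}k_g\,\d\lambda_g=\frac12\int_{\partial\Sigma}\partial_{\vec{n}_g}\varphi\,\d\lambda_g$. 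The one step you defer --- the explicit sign bookkeeping showing which of the $\pm$ corrections attaches to the Neumann versus the Dirichlet determinant --- is exactly the content the paper compresses into its remainder term $R$, so you should carry it out once to confirm the cancellation rather than assert it.
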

	\begin{proof} 
		Thanks to Lemma \ref{lemma:background/polyakov formula} we have 
		\begin{align*}
			&\log \mathcal{Z}_{C}(\Sigma,g') - \log \mathcal{Z}_{C}(\Sigma,g) = \frac{r}{96\pi} \left( \int_{\Sigma} (|d_g\varphi|_g^2 + 2R_g\varphi) \d v_g + 4\int_{\partial\Sigma} k_g \varphi \d\lambda_g\right) + R\\
			&\text{with}\quad R = \frac{d_N-d_D}{16\pi} \left( \int_{\partial\Sigma} \neu \varphi \d\lambda_g + 2\int_{\partial\Sigma} k_{g} \d\lambda_{g} - 2\int_{\partial\Sigma} k_{g'} \d\lambda_{g'} \right)
		\end{align*}
		which is seen to vanish thanks to \eqref{eq:conf change curvature}.
	\end{proof}
	
	\subsection{Gaussian Multiplicative Chaos}
	In order to make sense of the exponential of the GFF we need to regularize the field $\X^{g,C}$. To this end for $x\in\overline\Sigma$ and $\eps>0$ small let $C_g(x,\eps)\coloneqq\left\{y\in\overline{\Sigma}, d_g(x,y)=\eps\right\}$ be the geodesic (semi-)circle of radius $\eps>0$ centered at $x$, and $\mu_{g,\eps}[\cdot](x)$ be the uniform measure probability on $C_{g}(x,\eps)$. 
	\begin{defdef}
		The regularized GFF $\X^{g,C}_\eps$ is the average of $\X^{g,C}$ over geodesic circles:
		\begin{equation}
			\X^{g,C}_\eps(x)\coloneqq\mu_{g,\eps}[\X^{g,C}](x).
		\end{equation}
	\end{defdef}
	The averaged GFF $\X^{g,C}_\eps$ satisfies:
	\begin{lemma}\label{lemma:regularization GFF boundary case cardy}
		The field $\X^{g,C}_\eps$ has covariance kernel given by
		\begin{equation}\label{eq:cov_Zeps}
			\E[\ps{u,\X^{g,C}_\eps(x)}\ps{v,\X^{g,C}_\eps(y)}] = \iint (\ps{p_Nu,p_Nv} G_g^N + \ps{p_Du,p_Dv} G_g^D) \d\mu_{g,\eps}(x) \d\mu_{g,\eps}(y).
		\end{equation}
		Besides, if $g = e^\varphi g_0$ is a smooth Riemannian metric in the conformal class of the uniform type 1 metric $g_0$, then as $\eps\to 0$ and uniformly in $x\notin \partial \Sigma$:
		\begin{equation}\label{eq:variance_Zeps}
			\E[\ps{u,\mu_{g,\eps}[\X^{g_0,C}](x)}^2] = \left\lbrace \begin{array}{ll}
				|u|^2\left(-\log \eps + W_{g_0}(x)+\frac12\varphi(x)\right) + \ps{u,p_Nu-p_Du}G_{g_0}(x,\sigma(x)) + o(1)\\
				2\norm{p_Nu}^2\left(-\log\eps + W_{g_0}(x)+\frac12\varphi(x)\right) + o(1)
			\end{array} \right. 
		\end{equation}
		with $W_{g_0}$ a smooth function on $\bar\Sigma$, $x\in\Sigma$ on the first line and $x\in\partial\Sigma$ on the second one.
	\end{lemma}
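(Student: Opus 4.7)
The first identity, giving the covariance kernel of $\X^{g,C}_\eps$, follows directly from the covariance formula~\eqref{eq:GFF_Cardy} defining $\X^{g,C}$ together with the linearity of the circle averaging and Fubini's theorem, exactly as for the closed GFF in Lemma~\ref{lemma:regularization_X}.

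For the variance asymptotics, the strategy is to reduce everything to integrals against the closed Green's function $G_{\hat g_0}$ on the doubled surface $\hat\Sigma$. Combining the decompositions of $G_{g_0}^N$ and $G_{g_0}^D$ provided by Lemma~\ref{lemma:decomposition_green_function} with the identities $\|p_Nu\|^2+\|p_Du\|^2=\|u\|^2$ and $\|p_Nu\|^2-\|p_Du\|^2=\ps{u,p_Nu-p_Du}$, the variance rewrites as $\|u\|^2 I_1(x,\eps)+\ps{u,p_Nu-p_Du}\,I_2(x,\eps)$, where
\begin{equation*}
I_1 := \iint G_{\hat g_0}(y_1,y_2)\, d\mu_{g,\eps}(y_1)d\mu_{g,\eps}(y_2),\quad I_2 := \iint G_{\hat g_0}(y_1,\sigma(y_2))\, d\mu_{g,\eps}(y_1)d\mu_{g,\eps}(y_2).
\end{equation*}
The problem is thereby reduced to controlling the asymptotics of $I_1$ and $I_2$, each of which is an integral against the closed Green's function on $\hat\Sigma$ and thus accessible via Lemma~\ref{lemma:regularization_X}.

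For $x\in\Sigma$ in the interior, for $\eps$ small the (semi-)circle $C_g(x,\eps)$ is a full geodesic circle lying entirely in $\Sigma$, hence also in $\hat\Sigma$. Applying Lemma~\ref{lemma:regularization_X} to the closed GFF $\X^{\hat g_0}$ on $\hat\Sigma$ (with conformal factor $\hat\varphi$ extending $\varphi$, and writing $W_{g_0}(x):=W_{\hat g_0}(x)$) directly yields $I_1 = -\log\eps+W_{g_0}(x)+\frac12\varphi(x)+o(1)$. Since $\sigma(x)\neq x$, the integrand in $I_2$ stays at positive distance from the diagonal as $\eps\to 0$, so continuity of $G_{\hat g_0}$ off the diagonal gives $I_2\to G_{\hat g_0}(x,\sigma(x))$, settling the first case.

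For $x\in\partial\Sigma$ the key geometric input is that, since $g_0$ is of type 1, the boundary $\partial\Sigma$ is a geodesic of $\hat g_0$ and the canonical involution $\sigma$ is a local isometry near $\partial\Sigma$. Consequently $C_g(x,\eps)\cup\sigma(C_g(x,\eps))$ coincides with the full geodesic circle $C_{\hat g}(x,\eps)$ in $\hat\Sigma$. Setting $\tilde\mu_\eps := \frac12(\mu_{g,\eps}+\sigma_*\mu_{g,\eps})$ for the uniform probability on this full circle and exploiting the symmetry $G_{\hat g_0}(\sigma\cdot,\sigma\cdot)=G_{\hat g_0}(\cdot,\cdot)$, a short expansion yields $I_1+I_2 = 2\iint G_{\hat g_0}\, d\tilde\mu_\eps d\tilde\mu_\eps$, and Lemma~\ref{lemma:regularization_X} applied to the closed GFF on $\hat\Sigma$ (where $x$ is an interior point) then gives $I_1+I_2 = 2(-\log\eps+W_{g_0}(x)+\frac12\varphi(x))+o(1)$. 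Rewriting the variance as $\|p_Nu\|^2(I_1+I_2)+\|p_Du\|^2(I_1-I_2)$ identifies the leading logarithmic divergence with the announced coefficient $2\|p_Nu\|^2$. The main technical obstacle is to control the remainder $\|p_Du\|^2(I_1-I_2)=\|p_Du\|^2\iint G^D_{g_0}\,d\mu_{g,\eps}d\mu_{g,\eps}$, which requires a careful local analysis in Fermi coordinates near $\partial\Sigma$ exploiting the fact that the Dirichlet Green's function $G^D_{g_0}(\cdot,\cdot)=G_{\hat g_0}(\cdot,\cdot)-G_{\hat g_0}(\cdot,\sigma(\cdot))$ vanishes on the boundary.
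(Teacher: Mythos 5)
Your overall route is the same as the paper's: the paper's proof is a two-line citation that amounts to rewriting $G^N_{g_0}$ and $G^D_{g_0}$ via Lemma~\ref{lemma:decomposition_green_function} in terms of $G_{\hat g_0}$ on the doubled surface and then invoking the circle-average estimates of \cite{GRV19}. Your first display, your treatment of interior points (where $\sigma(x)\neq x$ keeps $I_2$ away from the diagonal), and your symmetrization identity $I_1+I_2=2\iint G_{\hat g_0}\,\d\tilde\mu_\eps\,\d\tilde\mu_\eps$ at boundary points are all correct and usefully more explicit than the paper.

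However, the closing step for $x\in\partial\Sigma$ has a genuine gap: the remainder $I_1-I_2=\iint G^D_{g_0}\,\d\mu_{g,\eps}\,\d\mu_{g,\eps}$ is \emph{not} $o(1)$, and the vanishing of $G^D_{g_0}$ on $\partial\Sigma$ cannot make it so, because the semicircle $C_g(x,\eps)$ lies at distance of order $\eps$ from the boundary, which is precisely the scale at which $G^D_{g_0}$ is of order one. Concretely, in the flat half-plane model one has $G^D_{\H}(y_1,y_2)=\log\norm{(y_1-\bar y_2)/(y_1-y_2)}$, which is scale invariant; hence $\iint G^D_{\H}\,\d\mu_\eps\,\d\mu_\eps$ is independent of $\eps$ and equals the variance of the unit-semicircle average of the Dirichlet GFF, a strictly positive universal constant (an explicit computation gives $\tfrac{1}{\pi^2}\int_0^\pi\int_0^\pi\log\norm{\sin\tfrac{\theta_1+\theta_2}{2}/\sin\tfrac{\theta_1-\theta_2}{2}}\,\d\theta_1\d\theta_2=7\zeta(3)/\pi^2$). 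So whenever $p_Du\neq 0$ the term $\norm{p_Du}^2(I_1-I_2)$ converges to a strictly positive constant, and a ``careful local analysis in Fermi coordinates'' will produce this constant rather than kill it. As a consequence your argument, carried out correctly, yields the second line of \eqref{eq:variance_Zeps} only up to an additional term $\norm{p_Du}^2c_D(x)$ with $c_D(x)=\lim_{\eps\to0}\iint G^D_{g_0}\,\d\mu_{g_0,\eps}\,\d\mu_{g_0,\eps}>0$; this cannot be absorbed into $W_{g_0}$ since that function carries the coefficient $2\norm{p_Nu}^2$. The discrepancy is harmless for the sequel (only the coefficient of $\log\eps$ matters for the normalization in \eqref{eq:GMC_bdry_Z}, and the constant only modifies the deterministic density of the boundary GMC), but your proof must track this constant explicitly rather than claim it vanishes.
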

	\begin{proof}
		This again follows from the computations conducted in~\cite[Lemma 3.2 and Equation (3.10)]{GRV19} by writing $G_{g_0}^{N/D}$ in terms of $G_{\hat g_0}$ as in Lemma~\ref{lemma:decomposition_green_function}. See also the proof of Lemma~\ref{lemma:regularization_X}.
	\end{proof}
	\begin{defdef}
		The GMC measures associated to $\X^{g,C}$ are then defined by the limits:
		\begin{align} \label{eq:GMC_bulk_Z}
			e^{\ps{u,\X^{g,C}(x)}} \d v_g(x) &:= \lim\limits_{\eps\to 0} \eps^{\frac{|u|^2}{2}} e^{\ps{u,\X^{g,C}_\eps(x)}} \d v_g(x)
		\end{align}
		for $x$ in the bulk, while for $x\in\partial\Sigma$ they are given by
		\begin{align} \label{eq:GMC_bdry_Z}
			e^{\ps{u,\X^{g,C}(x)}} \d \lambda_g(x) &:= \lim\limits_{\eps\to 0} \eps^{\norm{p_Nu}^2} e^{\ps{u,\X^{g,C}_\eps(x)}} \d \lambda_g(x).
		\end{align}
	\end{defdef}
	Like before all the convergences stand in probability in the space of Radon measures equipped with the weak topology, provided $\norm{u}^2<4$ for the bulk measure and $\norm{p_Nu}^2<1$ for the boundary one. We stress that in the case where $\tau$ is the identity, the above statement amounts to defining the vectorial Neumann GMC on $\Sigma$.
	
	Finally we provide the analogue of Lemma~\ref{lemma:conformal change GMC} for the behavior of the GMC measures under a conformal change of metric. Recall that the $f_j$ are a basis of $\a_N$ of the form $f_j=p_N(e_i)$.
	\begin{lemma} \label{lemma:conformal change GMC cardy} 
		Let $g = e^\varphi g_0$ be a metric in the conformal class of the uniform type 1 metric $g_0$, and $u$ in $\a$. Then the following equalities hold in distribution:
		\begin{align*}
			&e^{\ps{u,\X^{g,C}}} \d v_{g} \eqlaw e^{\left(1+\frac{|u|^2}{4}\right)\varphi} e^{\ps{u,\X^{g_0,C}-m_g(\X^{g_0,C})}} \d v_{g_0}\\
			\qt{and}  	&e^{\ps{u,\X^{g,C}}} \d \lambda_{g} \eqlaw e^{\left(1+\norm{p_Nu}^2\right)\frac\varphi2} e^{\ps{u,\X^{g_0,C}-m_g(\X^{g_0,C})}} \d \lambda_{g_0}.
		\end{align*}
	\end{lemma}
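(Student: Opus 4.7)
The plan is to mirror the proof of Lemma~\ref{lemma:conformal change GMC}, replacing the closed-case inputs by their Cardy analogues from Lemma~\ref{lemma:regularization GFF boundary case cardy}. The first step is the identity in law $\X^{g,C} \eqlaw \X^{g_0,C} - m_g(\X^{g_0,C})$, which is proved by a direct covariance computation: using the change of metric formula~\eqref{eq:conf change green functions}, valid for both the Neumann and Dirichlet Green's functions as noted after~\eqref{eq:dirich_pb_green}, and applying it component-wise in~\eqref{eq:GFF_Cardy}, the covariance of the right-hand side equals $\ps{p_Nu,p_Nv}G^N_g(x,y) + \ps{p_Du,p_Dv}G^D_g(x,y)$, which is exactly the covariance of $\X^{g,C}$.

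For the bulk identity I would then work at the level of regularizations. Using the above equality in law together with the linearity of the circle average and $\d v_g = e^\varphi\d v_{g_0}$,
\begin{equation*}
\eps^{\frac{|u|^2}{2}}e^{\ps{u,\X^{g,C}_\eps(x)}}\d v_g(x) \eqlaw e^{-\ps{u,m_g(\X^{g_0,C})}}\,\eps^{\frac{|u|^2}{2}}e^{\ps{u,\mu_{g,\eps}[\X^{g_0,C}](x)}}\,e^{\varphi(x)}\d v_{g_0}(x).
\end{equation*}
Rewriting the right-hand side in Wick-normalized form and expanding the variance via~\eqref{eq:variance_Zeps}, the divergent factor $\eps^{|u|^2/2}$ is compensated by the leading $-|u|^2\log\eps$ contribution, leaving a prefactor of the form $e^{|u|^2 W_{g_0}(x)/2 + |u|^2\varphi(x)/4 + \frac{1}{2}\ps{u,p_Nu-p_Du}G_{g_0}(x,\sigma(x))}$. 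Combined with $e^{\varphi}$ from the volume form this produces the announced factor $e^{(1+|u|^2/4)\varphi}$ times a $\varphi$-independent prefactor. Invoking that GMC measures are independent of the regularization scheme (applied separately to the independent Neumann and Dirichlet components of $\X^{g_0,C}$), I can replace $\mu_{g,\eps}[\X^{g_0,C}]$ by $\mu_{g_0,\eps}[\X^{g_0,C}]$, and running the same expansion in the metric $g_0$ the $\varphi$-independent prefactor is absorbed into the intrinsic definition of $e^{\ps{u,\X^{g_0,C}}}\d v_{g_0}$, yielding the claim.

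The boundary identity follows from the same strategy with three adjustments: the renormalization $\eps^{|u|^2/2}$ is replaced by $\eps^{\norm{p_Nu}^2}$, the line element is $\d\lambda_g = e^{\varphi/2}\d\lambda_{g_0}$, and the bulk variance estimate is replaced by the second branch of~\eqref{eq:variance_Zeps}. The arithmetic then produces a prefactor $e^{\norm{p_Nu}^2(W_{g_0} + \varphi/2)}$ which, combined with $e^{\varphi/2}$, yields the announced factor $e^{(1+\norm{p_Nu}^2)\varphi/2}$.

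The main conceptual subtlety, not present in the closed case of Lemma~\ref{lemma:conformal change GMC}, is the $x$-dependent cross term $\ps{u,p_Nu-p_Du}G_{g_0}(x,\sigma(x))$ appearing in~\eqref{eq:variance_Zeps}, which reflects the coupling between $x$ and its mirror point $\sigma(x)$ in the doubled surface. The key observation is that this term depends only on the reference metric $g_0$, so that it appears identically on both sides of the identity and cancels upon identifying the Cardy GMC in metric $g_0$ via its defining limit.
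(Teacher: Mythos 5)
Your proof is correct and follows essentially the same route as the paper's: regularize by circle averages, invoke the variance expansion of Lemma~\ref{lemma:regularization GFF boundary case cardy} to Wick-normalize and extract the conformal factor $e^{(1+|u|^2/4)\varphi}$ (resp. $e^{(1+\norm{p_Nu}^2)\varphi/2}$ on the boundary), observe that the remaining prefactor --- including the cross term $\ps{u,p_Nu-p_Du}G_{g_0}(x,\sigma(x))$ --- depends only on $g_0$, and conclude via the independence of the limiting GMC measure from the regularization scheme. The only difference is cosmetic: you make the identity in law $\X^{g,C}\eqlaw\X^{g_0,C}-m_g(\X^{g_0,C})$ explicit as a separate covariance computation, a step the paper leaves implicit in its first display.
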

	\begin{proof}
		It is exactly the same proof as in the closed case, using Lemma~\ref{lemma:regularization GFF boundary case cardy}. Namely for $x$ in the bulk
		\begin{align*}
			&e^{\ps{u,\X^{g,C}(x)}} \d v_g(x)\eqlaw \lim\limits_{\eps\to 0} \eps^{\frac{|u|^2}{2}} e^{\ps{u,\X^{g_0,C}_\eps(x)-m_g(\X^{g_0,C})}} e^{\varphi(x)} \d v_{g_0}(x) \\
			& = e^{\left(1+\frac{|u|^2}{4}\right)\varphi(x)} e^{-\ps{u,m_g(\X^{g_0,C})}} F_{\hat g_0}(x)\lim\limits_{\eps\to 0} e^{\ps{u,\mu_{g,\eps}(\X^{g_0,C})(x)} - \frac12 \E[\ps{u,\mu_{g,\eps}(\X^{g_0,C})(x)}^2]} \d v_{g_0}(x) 
		\end{align*}
		where $F_{\hat g_0}(x)=e^{\frac{1}{2}\left(|u|^2W_{\hat{g_0}}(x) + \ps{u,p_Nu-p_Du} G_{\hat{g_0}}(x,\sigma(x))\right)}$ is independent of $\varphi$. For $x\in\partial\Sigma$
		\begin{align*}
			&e^{\ps{u,\X^{g,C}(x)}} \d \lambda_g(x) \eqlaw \lim\limits_{\eps\to 0} \eps^{\norm{p_Nu}^2}e^{\ps{u,\mu_{g,\eps}[\X^{g_0,C}](x)-m_g(\X^{g_0,C})}} e^{\frac{\varphi(x)}{2}} \d \lambda_{g_0}(x) \\
			& = e^{\left(1+\norm{p_Nu}^2\right)\frac{\varphi(x)}2}e^{-\ps{u,m_g(\X^{g_0,C})}} e^{\norm{p_Nu}^2W_{\hat{g_0}}(x)}\times\lim\limits_{\eps\to 0} e^{\ps{u,\mu_{g,\eps}(\X^{g_0,C})(x)} - \frac12 \E[\ps{u,\mu_{g,\eps}(\X^{g_0,C})(x)}^2]} \d \lambda_{g_0}(x). 
		\end{align*}
		We end the proof as in Lemma~\ref{lemma:conformal change GMC}, using the fact that the limiting GMC measures are independent of the regularization scheme. 
	\end{proof}
	
	\subsection{Different boundary models}
	
	\subsubsection{The Toda action and invariant subspaces}
	
	Thanks to Equation~\eqref{eq:decomposition Cardy GFF}, for $\tau\in\text{Out}(\mc R)$ we can decompose $\X^{g,C}$ as a sum of a Neumann GFF $\X^{g,N}$ and a Dirichlet GFF $\X^{g,D}$. Like in the closed case the Neumann GFF is defined up to an additive constant (fixed by the requirement that $m_g(\X^{g,N})=0$), but this is not the case for the Dirichlet GFF. As a consequence the space where one should take the constant mode $\bm c$ is not $\a$ but rather $\a_N$.
	Likewise functionals of the field should take into account its specific form. In particular for exponentials of the Toda field $e^{\ps{\beta,\bphi(s)}}$, when  $s\in\partial\Sigma$ we can assume that $\beta\in\a_N$ since $\bphi(s)\in\a_N$ which is orthogonal to $\a_D$. 
	
	\begin{defdef}\label{def:part_fct_bord}
		Given cosmological constants $\mu_{B,i}>0$ and $\mu_j\ge 0$ for $1\leq i\leq r$ and $1\leq j\leq d_N$, and $g$ a smooth Riemannian metric on $\Sigma$, the path integral is defined by setting, for $F$ a bounded continuous functional on $H^{-1}(\Sigma\to\a,g)$, 
		\begin{equation} \label{eq:path integral Y}
			\ps{F}_{g,\tau} \coloneqq \lim\limits_{\eps\to 0} \ps{F}_{g,\tau,\eps},\quad  \ps{F}_{g,\tau,\eps} \coloneqq \mathcal{Z}_C(\Sigma,g) \int_{\a_N} \E \left[F(\X^{g,C}_\eps + \c) e^{-V_g(\X^{g,C}_\eps + \c)}\right] \d\c.
		\end{equation}
		Here the regularized potential is given by
		\begin{equation} \label{eq:reg_action_Y}
			\begin{split}
				V_g(\X^{g,C}_\eps + \c) = \frac{1}{4\pi} \int_{\Sigma} R_g \ps{Q,\X^{g,C}_\eps + \c} \d v_g + \frac{1}{2\pi} \int_{\partial\Sigma} k_g \ps{Q,\X^{g,C}_\eps + \c} \d\lambda_g \\ + \sum_{i=1}^{r} \mu_{B,i} e^{\ps{\gamma e_i,\c}} \int_\Sigma \eps^{\frac{\gamma_i^2}{2}} e^{\ps{\gamma e_i,\X^{g,C}_\eps}} \d v_g + \sum_{j=1}^{d_N} e^{\ps{\frac{\gamma}{2}f_j,\c}} \int_{\partial\Sigma} \eps^{\frac{\gamma^2\norm{ f_j}^2}{4}} e^{\ps{\frac{\gamma}{2} f_j,\X^{g,C}_\eps}} \mu_j (\d \lambda_g) .
			\end{split}
		\end{equation}
	\end{defdef}
	where $\mu_j(\d \lambda_j) \coloneqq \mu_j\d\lambda_j$ (the reason for this notation will become clear later). We then have the following analogue of Proposition~\ref{prop:convergence partition fct X}, proved in Appendix~\ref{sec:appendix_proof}.
	\begin{prop} \label{prop:part_fct_bord}
		In the setting of Defintion~\ref{def:part_fct_bord}, assume that $\chi(\Sigma)<0$. Then the limit~\eqref{eq:path integral Y} exists and belongs to $(0,\infty)$.
	\end{prop}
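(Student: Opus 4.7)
The plan is to follow the structure of the closed case (Proposition~\ref{prop:convergence partition fct X}), whose proof is deferred to Appendix~\ref{sec:appendix_proof}, and adapt it to accommodate (i) the zero-mode now living in $\a_N$ rather than $\a$, (ii) the additional boundary GMC terms involving the folded roots $f_j$, and (iii) the curvature contribution now comprising a bulk and a boundary piece.

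First I would use Cameron-Martin/Girsanov to absorb the linear term
$\frac{1}{4\pi}\int_\Sigma R_g \ps{Q,\X^{g,C}_\eps}\d v_g + \frac{1}{2\pi}\int_{\partial\Sigma}k_g\ps{Q,\X^{g,C}_\eps}\d\lambda_g$.
Because $Q\in\a_N$ (see Subsection~\ref{subsec:fold}), the required deterministic shift is valued in $\a_N$ and can be expressed in terms of the Neumann Green's function $G^N_g$ paired against $R_g$ and $k_g$; this shift is smooth thanks to Lemma~\ref{lemma:estimates_green_function} and produces a finite Girsanov prefactor together with a deterministic modification of the GMC integrands of the form $e^{\ps{Q,h_g}}$ with $h_g$ bounded on $\Sigma$ and $\partial\Sigma$. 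After this shift, the $\c$-dependent part of the exponent reduces by the Gauss-Bonnet identity~\eqref{eq:gauss-bonnet} to $-\chi(\Sigma)\ps{Q,\c}$, exactly as in the closed case.

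Next I would handle the $\eps\to 0$ limit of the GMC content. Using Lemma~\ref{lemma:regularization GFF boundary case cardy} and the standard GMC convergence theory (as invoked in Subsection~\ref{subsec:GMC_closed} and extended to the Cardy setting in Definitions~\eqref{eq:GMC_bulk_Z}--\eqref{eq:GMC_bdry_Z}), each regularized bulk measure $\eps^{\gamma_i^2/2}e^{\ps{\gamma e_i,\X^{g,C}_\eps}}\d v_g$ converges in probability in the weak topology of Radon measures to a positive, finite, non-trivial limit, and similarly for the boundary measures $\eps^{\gamma^2|f_j|^2/4}e^{\ps{\frac{\gamma}{2}f_j,\X^{g,C}_\eps}}\d\lambda_g$, which are well-defined since $\gamma\in(0,\sqrt 2)$ and $|f_j|\le\sqrt 2$ guarantee $\gamma^2|f_j|^2/4<1$. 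Combined with the continuity of $F$ on $H^{-1}(\Sigma\to\a,g)$ and the almost-sure convergence $\X^{g,C}_\eps\to\X^{g,C}$ in $H^{-1}$, the inner expectation converges pointwise in $\c$.

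The main obstacle is controlling the zero-mode integral uniformly in $\eps$ to justify dominated convergence. Decomposing $\c=\sum_{k=1}^{d_N}c_k\tilde\omega_k$ in a basis of $\a_N$ dual to the folded simple roots $(f_k)$, the $\c$-integrand becomes $\exp\bigl(-\chi(\Sigma)\ps{Q,\c}-\sum_i\mu_{B,i}e^{\gamma\ps{p_Ne_i,\c}}\hat M^\eps_{B,i}-\sum_j\mu_je^{\frac{\gamma}{2}\ps{f_j,\c}}\hat M^\eps_j\bigr)$ times the Girsanov-shifted $\E[F(\cdot)]$, where $\hat M^\eps$ stands for the shifted regularized GMC masses. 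For $c_k\to+\infty$ along any direction where some bulk or boundary cosmological constant survives, the double-exponential decay from the corresponding GMC term dominates; for $c_k\to-\infty$, using $\chi(\Sigma)<0$ together with $\ps{Q,\tilde\omega_k}>0$ (which follows from $Q=\gamma\rho_\tau+\tfrac{2}{\gamma}\rho$ having positive pairing with every element of the positive cone of the folded root system, see Equation~\eqref{eq:Q tau gamma tau}) yields exponential decay from the curvature term. A quantitative version of this bound, uniform in $\eps$ via the standard positive-moments estimates for GMC, yields an $L^1(\a_N)$ majorant and permits dominated convergence.

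Finally, strict positivity of the limit (when $F\equiv 1$) follows because on the event that all bulk and boundary GMC total masses lie in a bounded set, the integrand stays bounded below by a positive constant on a non-negligible region of $\c\in\a_N$, which has positive Lebesgue measure; finiteness follows from the integrability just established. The Weyl anomaly and diffeomorphism invariance (used later for Theorem~\ref{thm:axioms}) follow from Proposition~\ref{prop:conformal anomaly boundary} and Lemma~\ref{lemma:conformal change GMC cardy} by the same algebra as in the closed case.
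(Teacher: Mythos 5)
Your proposal is correct and follows essentially the same route as the paper, which proves this proposition as the no-insertion case ($N=M=0$) of Theorem~\ref{thm:existence} in Appendix~\ref{sec:appendix_proof}: reduce to the uniform type~1 metric (where, since $Q\in\a_N$ and $m_{g_0}(\X^{g_0,N})=0$, the curvature term is the deterministic quantity $\chi(\Sigma)\ps{Q,\c}$ by Gauss--Bonnet, so your Girsanov step for general $g$ is just the Weyl-anomaly computation done first), then pass to the limit in the GMC masses and control the zero-mode integral by double-exponential decay in the positive directions and by $\ps{Q,\tilde\omega_k}>0$ combined with $\chi(\Sigma)<0$ in the negative ones. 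Two small repairs are needed. First, the identity you invoke for $Q$ is actually the definition of $Q_\tau$ in~\eqref{eq:Q tau gamma tau}; the positivity $\ps{Q,\tilde\omega_k}>0$ follows instead from $\tilde\omega_k=\sum_{i\in\mc O_k}\omega_i^\vee$ (the dual basis in $\a_N$ of the $f_j$) together with $\ps{Q,\omega_i^\vee}=\gamma\ps{\rho,\omega_i^\vee}+\tfrac2\gamma\ps{\rho^\vee,\omega_i^\vee}>0$, which is exactly the first Seiberg bound with no insertions. Second, the uniform-in-$\eps$ decay of $\E[\exp(-\lambda Z_{j,\eps})]$ as $\lambda\to\infty$, which is what produces the $L^1(\a_N)$ majorant in the regions where some $\ps{f_j,\c}\to+\infty$, rests on \emph{negative} moments (equivalently, small-deviation estimates) of the GMC masses, uniformly in $\eps$; positive-moment bounds give no lower-tail control and that step would fail as literally stated. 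With these two corrections the argument matches the paper's.
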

	
	\subsubsection{Around the Weyl anomaly}\label{subsec:different_models}
	Having defined the path integral, we now would like to deduce from Lemma~\ref{lemma:conformal change GMC cardy} the analogue of the change of metric formula~\eqref{eq:change of metric path integral closed} for the path integral in the boundary case, which in turn allows to prove a Weyl anomaly for the correlation functions. However, there is a major obstruction that comes from the fact that, when $\tau\neq\Id$, the folded root system $(f_j)_{j=1,...,d_N}$ is \emph{not} simply-laced, so that the conformal anomaly of the boundary GMC measures associated with short roots is expressed in terms of a different background charge $Q_\tau$ \eqref{eq:Q tau gamma tau}, thus breaking the conformal symmetry of the model. We thus need to distinguish between the cases where $\tau=\Id$ or not. 
	
	\textbf{Neumann boundary conditions.} When the outer automorphism under consideration is the identity the construction leads to Neumann (or free) boundary conditions. In this case the model is conformally covariant (Theorem~\ref{thm:axioms}), and its algebra of symmetry is given by the $W$-algebra constructed from $\g$, both in the bulk and on the boundary, as shown in~\cite{CH_sym1}.
	
	\textbf{Cardy boundary conditions.} In view of Lemma~\ref{lemma:conformal change GMC cardy} we know that the conformal anomaly for the boundary GMC measure is given by $\exp\left(\frac{\ps{Q_\tau,\gamma f_j}}{4}\varphi\right)$ where $Q_\tau$ is defined in Equation~\eqref{eq:Q tau gamma tau}. It suggests that one should consider a model for which the background charge on the boundary is set to be $Q_\tau$ instead of $Q$. However, this spoils the construction in the bulk by breaking conformal covariance there! A way to overcome this issue is to assume that $\mu_{j}=0$ for all $j$ such that $\ps{Q, f_j}\neq \ps{Q_\tau, f_j}$, which is the case when $j\in\{1,\cdots,d_N\}\setminus I_\tau$.
	
	The  model then enjoys conformal Weyl anomaly and diffeomorphism invariance:
	\begin{prop} \label{prop:convergence partition function bdry}
		In the setting of Proposition~\ref{prop:convergence partition function bdry}, assume that $\mu_j\neq0$ only if $j\in I_\tau$. Then the model satisfies the Weyl anomaly formula~\eqref{eq:weyl} and diffeomorphism invariance~\eqref{eq:diffeo invariance}. 
	\end{prop}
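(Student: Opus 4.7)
The argument will be a close boundary analog of Theorem~\ref{thm:seiberg}, whose full proof is deferred to Appendix~\ref{sec:appendix_proof}; we only indicate the new boundary ingredients. The plan is to work at the regularized level given by~\eqref{eq:path integral Y}, including the vertex operators from~\eqref{eq:def_correl_intro}, and to compare the quantity computed in $g' = e^\varphi g$ with the one in $g$ before passing to the limit $\eps \to 0$. It suffices to reduce to the case where $g$ is a uniform metric of type~1 thanks to the composition property of the Weyl anomaly.

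The key step is to apply Lemma~\ref{lemma:conformal change GMC cardy} to each GMC appearing in~\eqref{eq:reg_action_Y}. The bulk term $e^{\ps{\gamma e_i, \X^{g',C}}} \d v_{g'}$ produces a factor $e^{(1+\gamma^2\norm{e_i}^2/4)\varphi}$; since $\tau \neq \Id$ forces $\g$ to be simply-laced, $\norm{e_i}^2 = 2$, so this equals $e^{\ps{Q,\gamma e_i}\varphi/2}$ and is consistent with the background charge $Q$. The boundary term $e^{\ps{\gamma f_j/2, \X^{g',C}}} \d\lambda_{g'}$ produces $e^{(1+\gamma^2\norm{f_j}^2/4)\varphi/2}$, which equals $e^{\ps{Q, \gamma f_j}\varphi/4}$ only when $\norm{f_j}^2 = 2$, \textit{i.e.}\ when $j \in I_\tau$ (the check uses that $\ps{\rho, f_j} = \ps{\rho^\vee, f_j} = 1$ for all $j$ as soon as $\g$ is simply-laced); for $j \notin I_\tau$ the matching would instead require the modified background charge $Q_\tau$ of~\eqref{eq:Q tau gamma tau}, inconsistently with the bulk. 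The hypothesis $\mu_j = 0$ for $j \notin I_\tau$ is precisely what rules out this obstruction. The fixed vertex operators $V_{\alpha_k,\eps}(x_k)$ and $V_{\beta_l,\eps}(s_l)$ then contribute, via a Cameron-Martin shift exploiting the logarithmic divergence from Lemmas~\ref{lemma:estimates_green_function} and~\ref{lemma:regularization GFF boundary case cardy}, the expected factors $e^{-\Delta_{\alpha_k}\varphi(x_k)}$ and $e^{-\Delta_{\beta_l}\varphi(s_l)/2}$.

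Next one would treat the curvature terms $\frac{1}{4\pi}\int_\Sigma R_{g'}\ps{Q,\X^{g',C}_\eps+\c}\d v_{g'} + \frac{1}{2\pi}\int_{\partial\Sigma}k_{g'}\ps{Q,\X^{g',C}_\eps+\c}\d\lambda_{g'}$ via~\eqref{eq:conf change curvature} together with the Gauss-Green identities~\eqref{eq:gauss-green formula closed} and~\eqref{eq:gauss-green formula neumann} applied componentwise to $\X^{g,C}_\eps + \c$; the extra $\Delta_g\varphi$ and $\neu\varphi$ pieces are absorbed by a zero-mode shift $\c \mapsto \c - m_{g'}(p_N\X^{g,C})$, combined with Gauss-Bonnet~\eqref{eq:gauss-bonnet}. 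Adding the partition-function anomaly from Proposition~\ref{prop:conformal anomaly boundary} (providing the $r/96\pi$ term) to the Gaussian second-moment contribution produced by the $Q$-linear curvature terms (providing the $|Q|^2/16\pi$ term) assembles into the central charge coefficient $(r+6|Q|^2)/96\pi$ of~\eqref{eq:weyl}.

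Diffeomorphism invariance is considerably simpler: if $\psi$ is an orientation-preserving diffeomorphism preserving $\partial\Sigma$, uniqueness of the solutions of~\eqref{eq:Neumann_pb_green} and~\eqref{eq:dirich_pb_green} yields $G^{N/D}_{\psi^*g}(x,y) = G^{N/D}_g(\psi(x),\psi(y))$, whence $\X^{\psi^*g,C} \eqlaw \X^{g,C}\circ\psi$; the regularization circles $C_{\psi^*g}(x,\eps)$ are sent by $\psi$ to $C_g(\psi(x),\eps)$, the volume and line elements together with the curvatures pull back, and the Laplacian spectra are invariant, so that~\eqref{eq:diffeo invariance} holds at the regularized level and passes to the limit. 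The main obstacle throughout will be the careful bookkeeping of boundary contributions, and in particular verifying that the boundary GMC conformal factor matches the bulk background charge $Q$; this is exactly what the restriction of $(\mu_j)$ to indices in $I_\tau$ ensures.
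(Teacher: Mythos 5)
Your proposal is correct and follows essentially the same route as the paper's proof in Appendix~\ref{sec:appendix_proof}: reduction to the uniform type 1 metric via the composition property of conformal factors, the conformal change lemmas for the Cardy GFF and GMC, a Girsanov/Cameron--Martin shift for the curvature-linear term (whose second moment produces the $|Q|^2/16\pi$ piece) and for the vertex operators, zero-mode shifts in $\a_N$, Gauss--Bonnet, and Proposition~\ref{prop:conformal anomaly boundary} for the $r/96\pi$ piece; you also correctly identify that the hypothesis $\mu_j=0$ for $j\notin I_\tau$ is exactly what makes the boundary GMC anomaly match the Girsanov shift governed by $Q$ rather than $Q_\tau$. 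The only cosmetic remark is that the bulk GMC factor matches $Q$ for any root length by the identity $\ps{Q,e_i}=\gamma\norm{e_i}^2/2+2/\gamma$, so invoking simply-lacedness there is unnecessary (though harmless, since $\tau\neq\Id$ does force $\g$ simply-laced).
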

	
	\subsubsection{Correlation functions}
	
	On the surface with boundary $\Sigma$ we consider not only $N$ insertion points in the bulk together with their associated weights $(x_k,\alpha_k)_{k=1,...,N}$ but also $M \in \N$ insertions on the boundary $(s_l,\beta_l)_{l=1,...,M}$ with $s_l \in \partial\Sigma$ and $\beta_l\in\a_N$. 
	
	In the boundary case one can slightly generalize the model by introducing different cosmological constants between the boundary insertion points. Namely, suppose that the boundary of $\Sigma$ is composed of $\k$ connected components $\mc C_n$, $n=1,...,\k$, and that $M_n$ insertions lie on the part $\mc C_n$ of $\partial\Sigma$ (thus $M_1+...+M_\k=M$). Let us introduce a family of non-negative\footnote{In fact, one can even choose the boundary cosmological constants in $\C$ with $\Re(\mu_{j,l_n}^{(n)}) \ge 0$.} cosmological constants $(\mu_{j,l_n}^{(n)})$ with $j=1,\cdots,d_N$, $n=1,...,\k$ and $l_n=1,...,M_n$. Then in the action \eqref{eq:reg_action_Y} we set $\mu_j(\d \lambda_g) = \sum_{n=1}^\k \sum_{l_n=1}^{M_n} \mu_{j,l_n}^{(n)} \ind_{(s_{l_n}^{(n)},s_{l_n+1}^{(n)})} \d \lambda_g$, $(s_{l_n}^{(n)},s_{l_n+1}^{(n)})$ being the part of $\partial\Sigma$ between the two insertion points $s_{l_n}^{(n)}$ and $s_{l_n+1}^{(n)}$ and with the convention  that $s_{M_n+1}^{(n)} = s_1^{(n)}$.
	
	\begin{defdef}
		The bulk Vertex Operators are defined similarly as in Equation~\eqref{eq:reg VO definition bulk X} by
		\begin{equation} \label{eq:reg VO definition bulk boundary case}
			V_{\alpha_k,\eps}(x_k) = \eps^{\frac{|\alpha_k|^2}{2}} e^{\ps{\alpha_k,\X^{g,C}_\eps(x_k) + \c}}\qt{and}V_{\beta_l,\eps}(s_l) = \eps^{\frac{|\beta_l|^2}{4}} e^{\ps{\frac{\beta_l}{2},\X^{g,C}_\eps(s_l) + \c}}
		\end{equation}
		on the boundary. We then define the correlation function in the boundary case by the limit
		\begin{equation} \label{eq:correl boundary}
			\begin{split}
				\ps{\prod_{k=1}^NV_{\alpha_k}(x_k)\prod_{l=1}^MV_{\beta_l}(s_l)}_{g,\tau} &\coloneqq \lim\limits_{\eps\to 0}\text{ } \ps{\prod_{k=1}^NV_{\alpha_k,\eps}(x_k)\prod_{l=1}^MV_{\beta_l,\eps}(s_l)}_{g,\tau,\eps}\\
				\coloneqq\lim\limits_{\eps\to 0}\text{ } &\mathcal{Z}_C(\Sigma,g) \int_{\a_N} \E \left[\prod_{k=1}^NV_{\alpha_k,\eps}(x_k)\prod_{l=1}^MV_{\beta_l,\eps}(s_l)e^{-V_g(\X^{g,C}_\eps + \c)}\right] \d\c.
			\end{split}
		\end{equation}
	\end{defdef}
	Theorems~\ref{thm:existence} and \ref{thm:axioms} (proved in Appendix~\ref{sec:appendix_proof}) state that the limit is well-defined and non-trivial if and only if the Seiberg bounds hold true, and that the limiting correlation functions do enjoy diffeomorphism invariance, and satisfy the Weyl anomaly formula, under the assumptions of Proposition~\ref{prop:convergence partition function bdry}.
	
	\subsection{Algebra of symmetry}\label{subsec:W_twist}
	We briefly discuss here what is the algebra of symmetry of the models constructed, especially when $\tau$ is non-trivial. To this end let us assume that $\Sigma$ is the upper-half plane $\H$ equipped with a metric conformally equivalent to $g=\frac{\norm{dz}^2}{(1+\norm{z}^2)^2}$. This discussion strongly relies on the framework developed in~\cite{Cer_VOA}, to which we refer for more details, and definitions of the objects involved and of the notations used.
	
	\subsubsection{From the Heisenberg vertex algebras to $W$-algebras}
	Following~\cite[Section 3]{Cer_VOA}, inside $\H$ the symmetry algebra associated with the free-field $\X^g$ is, independently of the boundary conditions chosen, described by the Heisenberg vertex algebra of rank $r$. It is constructed out of a Fock space $\V_{+,\bm c}$ of rank $r$, generated by acting on a \textit{vacuum state} $\vac$ . Compared to the free-field theory, the addition of the Toda potential $e^{\ps{\gamma e_i,\X}}$ to the action breaks this symmetry but the $W$-symmetry, encoded by $W$-algebras, is preserved~\cite[Theorem 5.5]{Cer_VOA}. This is due to the definition of the $W$-algebra associated to $\g$ via the intersection of kernels of screening operators $\left(Q_i^+\right)_{1\leq i\leq r}$,  formally defined by $Q_i^+\coloneqq \oint\mc V_{\gamma e_i}^+(z)\d z$ where $\mc V_{\gamma e_i}$ is a bosonic Vertex Operator, see~\cite[Subsection 4.2]{Cer_VOA}. Namely, the restriction of the vertex algebra structure from $\V_{+,\bm c}$ to $\MW{\g}\coloneqq\bigcap_{1\leq i\leq r}\text{Ker}_{\V_{+,\bm c}}\left(Q_{i}^+\right)$ defines the $W$-algebra associated with $\g$. For generic values of $\gamma$, this $W$-algebra is generated using the modes of currents $\left(\Wb^{(s_i)}[\Psi](z)\right)_{1\leq i\leq r}$ of spins $s_i$, where $s_i-1$ ranges over the exponents of $\g$. These currents may be thought of as polynomials in the holomorphic derivatives of the Toda field $\Phi$, or rather its algebraic counterpart $\Psi$ (denoted $\Phi$ in~\cite[Equation (3.14)]{Cer_VOA}) which is such that $\partial\Psi(z)=\sum_{n\in\Z}\A_nz^{-n-1}\in\text{End}(\V_{+,\bm c})[[z,z^{-1}]]$.
	In agreement with~\cite[Theorem 4.6.9]{FF_QG} and~\cite[Theorem 4.3]{Cer_VOA}, these currents can be chosen in such a way that their modes $(\Wb^{(s_i)}_{-n_i})_{\substack{1\leq i\leq r\\ n_i\geq s_i}}$ freely generate $\MW{\g}$, with in addition the commutation relations $[\Wb^{(2)}_{n},\Wb^{(s_i)}_{m}]=\left(n(s_i-1)-m\right)\Wb^{(s_i)}_{m}$ for all $1\leq i\leq r$ and $n,m\in\mathbb Z$. Here $\Wb^{(2)}$ is the stress-energy tensor $\Wb^{(2)}[\Psi]=\ps{Q,\partial^2\Psi}-\ps{\partial\Psi,\partial\Psi}$. 
	
	\subsubsection{Covariance of the $W$-algebra currents under $\tau\in\text{Out}(\mc R)$}
	Compared to the bulk case, in the presence of a boundary the choice of an element $\tau$ in $\text{Out}(\mc R)$ has concrete implications on the properties of the symmetry algebra. Namely, when $\tau$ is of order two, in view of Equation~\eqref{eq:2D Cardy GFF}, one can still make sense of $\X^{g,C}(\bar z)$ for $z\in\H$, which is nothing but $\tau(\X^{g,C})=(\X^{g,C})^*$. As such, with a slight abuse of notation, we have $\Wb^{(s_i)}[\Psi](\bar z)=\Wb^{(s_i)}[\Psi^*](z)$. Now, and up to redefining the currents $\Wb^{(s_i)}$, we can show that the effect of changing $\Psi$ to $\Psi^*$ has an explicit effect on these currents as follows:
	\begin{prop}\label{prop:W-alg}
		Assume that $\g\neq \mathfrak{so}_{4n}$ for some $n\geq2$. There exist currents $\Wb^{(s_i)}[\Psi]$ of spins $s_i$, with $s_i-1$ ranging over the exponents of $\g$, such that for generic values of $\gamma$:
		\begin{itemize}
			\item $\MW{\g}$ is freely generated by acting on $\vac$ with the modes $(\Wb^{(s_i)}_{-n_i})_{\substack{1\leq i\leq r\\ n_i\geq s_i}}$;
			\item $[\Wb^{(2)}_{n},\Wb^{(s_i)}_{m}]=\left(n(s_i-1)-m\right)\Wb^{(s_i)}_{m}$ for all $1\leq i\leq r$ and $n,m\in\mathbb Z$;
			\item for $\tau\in\text{Out}(\mc R)$ of order $2$ we have $\Wb^{(s_i)}[\tau\Psi]=\pm\Wb^{(s_i)}[\Psi]$.
		\end{itemize}
        In the case of $\g=\sl_n$ we have $\Wb^{(s_i)}[\tau\Psi]=\left(-1\right)^{s_i}\Wb^{(s_i)}[\Psi]$ for $\tau\in\text{Out}(\mc R)
        $ non-trivial.
	\end{prop}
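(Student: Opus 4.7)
The plan is to exploit the natural action of the isometry $\tau$ on the free-field vertex algebra, show it descends to $\MW{\g}$, and then use the distinct-spin hypothesis $\g\neq \mathfrak{so}_{4n}$ to construct generators that are eigenvectors for this action. First, since $\tau$ is an isometry of $\a$, it lifts canonically to an automorphism of the Heisenberg vertex algebra of rank $r$ acting on modes via $\tau\cdot \A_n = (\tau\A)_n$, so that $\partial(\tau\Psi)(z) = \tau\,\partial\Psi(z)$. As $\tau$ permutes the simple roots $e_i$, it permutes the bosonic Vertex Operators $\mc V_{\gamma e_i}^+(z)$ and hence the screening operators $Q_i^+$. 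Therefore $\tau$ preserves $\bigcap_{i=1}^r \mathrm{Ker}(Q_i^+)=\MW{\g}$. The stress-energy tensor moreover is $\tau$-invariant since $Q\in\a_N$ and $\tau$ is an isometry, giving immediately $\Wb^{(2)}[\tau\Psi]=\Wb^{(2)}[\Psi]$.

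Second, for $\tau$ of order two, one obtains an involution of $\MW{\g}$ respecting the conformal grading, so $\MW{\g}=\MW{\g}^+\oplus\MW{\g}^-$. Here the assumption $\g\neq\mathfrak{so}_{4n}$ enters crucially: it ensures the exponents of $\g$, and hence the spins $s_i$, are all pairwise distinct. This lets us construct generators inductively in increasing spin. Having fixed $\Wb^{(s_j)}\in\MW{\g}^{\varepsilon_j}$ for $j<i$, the quotient of $\MW{\g}$ at conformal weight $s_i$ by the subalgebra polynomially generated by the previously-chosen currents is one-dimensional for generic $\gamma$ (by \cite[Theorem 4.6.9]{FF_QG} and \cite[Theorem 4.3]{Cer_VOA}); this quotient is $\tau$-stable since $\tau$ acts diagonally on products/derivatives of lower-spin generators. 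Hence any lift of a generator projects onto a definite $\pm 1$ eigenvector, producing $\Wb^{(s_i)}\in\MW{\g}^{\varepsilon_i}$. The commutation relations with $\Wb^{(2)}$ encode primarity and are preserved by this choice because $\Wb^{(2)}$ is itself $\tau$-invariant.

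For the $\sl_n$ case I would pin down the sign as $\varepsilon_i=(-1)^{s_i}$ via the Miura transformation: the generators of $\mc W_n=\MW{\sl_n}$ arise as coefficients of the differential operator
\begin{equation*}
L[\Psi]=\prod_{k=1}^{n}\bigl(\alpha_+\partial+\ps{h_k,\partial\Psi}\bigr),
\end{equation*}
where $h_1,\ldots,h_n$ are the weights of the fundamental representation ordered so that $h_{k}-h_{k+1}=e_k$. The non-trivial outer automorphism acts by $h_k\mapsto -h_{n+1-k}$, so after substitution each linear factor becomes $\alpha_+\partial-\ps{h_{n+1-k},\partial\Psi}$; reversing the order of the product and using the formal adjoint $(\alpha_+\partial)^*=-\alpha_+\partial$ then yields the functional identity $L[\tau\Psi]=(-1)^n L^*[\Psi]$. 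Matching coefficients of $(\alpha_+\partial)^{n-s_i}$ on both sides gives $\Wb^{(s_i)}[\tau\Psi]=(-1)^{s_i}\Wb^{(s_i)}[\Psi]$ after the standard normalization, which is precisely the claim for $\sl_n$ and determines that $\varepsilon_i=(-1)^{s_i}$.

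The main obstacle is the inductive construction: one must argue that at each spin $s_i$ the space of genuinely new generators modulo the subalgebra already built is one-dimensional (this is where distinctness of the exponents, failing exactly for $D_{2n}$ with $n\geq 2$, plays an essential role), and one must verify the primarity-type relations with $\Wb^{(2)}$ are compatible with projecting onto a definite eigenspace. The other delicate step is accounting for all combinatorial signs when identifying $L[\tau\Psi]$ with $(-1)^n L^*[\Psi]$; a careful computation of the formal adjoint of a product of first-order differential operators with noncommuting coefficients is required, but the resulting signs match those predicted by the abstract $\pm$-decomposition in step two.
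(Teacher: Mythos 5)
Your proposal is correct and its skeleton coincides with the paper's: lift $\tau$ to the Heisenberg vertex algebra, observe that it permutes the screening operators and hence preserves $\MW{\g}$ and fixes $\Wb^{(2)}$, then build the generators inductively spin by spin, using the hypothesis $\g\neq\mathfrak{so}_{4n}$ exactly as you do (distinctness of the exponents makes the space of new generators at level $s_i$ one-dimensional), and finally symmetrize to get a $\pm1$-eigenvector; your quotient formulation is a slightly cleaner packaging of the paper's explicit decomposition $\bm w^{(s_i)}[\tau\Psi]=\lambda_1\bm w^{(s_i)}[\Psi]+\Db^{<s_i}$ followed by a case analysis on $\lambda_1$. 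The one place where you genuinely diverge is the determination of the sign for $\sl_n$. The paper isolates the top multilinear component $P_0(\Psi)=\sum_{|\mc P|=s_i}\prod_{j\in\mc P}\ps{h_j,\partial\Psi}$ of the Miura current, uses $\tau h_k=-h_{n+1-k}$ to get $P_0(\tau\Psi)=(-1)^{s_i}P_0(\Psi)$, and rules out the wrong sign by a linear-independence argument against the top components of the lower-spin composites. You instead invoke the duality $L[\tau\Psi]=(-1)^nL^{*}[\Psi]$ of the full Miura operator and match coefficients. Both work and both ultimately rest on the same input; your adjoint identity is more conceptual, but note that commuting the derivatives past the coefficients when forming $L^{*}$ produces correction terms, so coefficient matching only yields $\Wb^{(s_i)}[\tau\Psi]=(-1)^{s_i}\Wb^{(s_i)}[\Psi]$ modulo differential polynomials in the lower-spin currents. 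This is not a gap -- it is precisely the statement needed, since the sign you are after is the eigenvalue of $\tau$ on the one-dimensional quotient by the lower-spin subalgebra -- but you should say explicitly that the residual terms land in that subalgebra (they do, being normally ordered products and $L_{-1}$-derivatives of the $\Wb^{(s_j)}$ with $s_j<s_i$), which is the point the paper's linear-independence step is making in its own way.
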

	\begin{proof}
		Let $\tau$ have order $2$. Since $\tau$ gives rise to a permutation of the simple roots $(e_i)_{1\leq i\leq r}$, the intersection of the kernels of the screening operators remains unchanged. Moreover, the stress-energy tensor $\Wb^{(2)}[\Psi]=\ps{Q,\partial^2\Psi}-\ps{\partial\Psi,\partial\Psi}$ is also invariant under $\Psi\to\Psi^*$. As a consequence, we see that the commutation relations $[\Wb^{(2)}_{n},\Wb^{(s_i)}_{m}]=\left(n(s_i-1)-m\right)\Wb^{(s_i)}_{m}$ are also preserved. Thus changing the currents from $\Wb^{(s_i)}[\Psi]$ to $\Wb^{(s_i)}[\Psi^*]$ preserves the $W$-algebra and the property of being a Virasoro primary field, whatever basis of currents is chosen. We denote such a basis by $\left(\bm w^{(s_i)}\right)_{1\leq i\leq r}$.

        Let us now construct the currents by showing inductively that for all $1\leq i\leq r$, there exist currents $\Wb^{(s_j)}$, $1\leq j\leq i$, such that $\MW{\g}^{(\leq s_i)}\coloneqq \bigoplus_{n\leq s_i}\MW{\g}^{(n)}$ is freely generated by acting on $\vac$ with the modes of these currents, and with $\Wb^{(s_j)}[\Psi^*]=\pm \Wb^{(s_j)}[\Psi]$. Here the grading is the one from the proof of~\cite[Theorem 4.3]{Cer_VOA}. The case of $i=1$ corresponds to the stress-energy tensor (of spin $s=2$), so we can focus on the inductive step. Let us assume that we have been able to define currents of spins $s_j$, $1\leq j\leq i-1$, satisfying these assumptions. Then, following the proof of~\cite[Theorem 4.6.9]{FF_QG},  the subspace $\MW{\g}^{(s_i)}$ of $\MW{\g}$ at level $s_i$ is given, since we have assumed that $\g\neq \mathfrak{so}_{4n}$, by
        \[
            \MW{\g}^{(s_i)}=\C \bm w^{(s_i)}_{-s_i}\vac\bigoplus \text{span}\left\{\Wb^{(s_1)}_{-\lambda_1}\cdots\Wb^{(s_{i-1})}_{-\lambda_{i-1}}\vac\right\}.
        \]
        Since $\tau$ preserves the space $\MW{\g}$, we can thus expand $\bm w_{-s_i}^{(s_i)}[\Psi^*]\vac = \lambda_1 \bm w_{-s_i}^{(s_i)}[\Psi]\vac + \Db^{<s_i}$ where $\Db^{<s_i}$ is generated using the modes of the currents $\Wb^{(s_j)}[\Psi]$ for $j<s_i$. By induction, we can further decompose $\Db^{<s_i}=\Db^{<s_i}_N+\Db^{<s_i}_D$ where $\tau \Db^{<s_i}_N=\Db^{<s_i}_N$ while $\tau \Db^{<s_i}_D=-\Db^{<s_i}_D$. 
        Now since $\tau$ is an involution, applying $\tau$ once again we obtain 
        $$
            \left(1+\lambda_1\right)\left(\bm w_{-s}^{(s)}[\Psi^*]-\bm w_{-s}^{(s)}[\Psi]\right)\vac = -2\Db^{<s_i}_D.
        $$
        Here we need to distinguish according to the value of $\lambda_1$. Let us first assume that $\lambda_1\neq-1$. We can then define $\Wb^{(s)}$ to be the vertex operator (see~\cite[Equation (3.19)]{Cer_VOA}) associated to $\bm w_{-s}^{(s)}[\Psi]\vac+\frac1{1+\lambda_1}\Db^{<s_i}_D$. Then by construction we have $\Wb^{(s_i)}[\Psi^*]=\Wb^{(s_i)}[\Psi]$. Moreover since $\bm w_{-s}^{(s)}[\Psi]\vac$ and $\frac1{1+\lambda_1}\Db^{<s_i}_N$ are linearly independent, the induction hypothesis is satisfied.  
        If we rather assume that $\lambda_1=-1$, then $\left(\bm w_{-s}^{(s)}[\Psi^*]+\bm w_{-s}^{(s)}[\Psi]\right)\vac = \Db^{<s_i}_D$, so that we rather define $\Wb^{(s)}$ to be the vertex operator associated to $\bm w_{-s}^{(s)}[\Psi]\vac-\frac12\Db^{<s_i}_D$. We then obtain $\Wb^{(s)}[\Psi^*]=-\Wb^{(s)}[\Psi]$ and the induction hypothesis is satisfied:
        we can always find currents that verify the two first points of the proposition, and such that $\Wb^{(s_i)}[\Psi^*]=\pm\Wb^{(s_i)}[\Psi]$. 
        
        It remains to see that this sign, in the $\sl_n$ case, is determined by the spin $s_i$. To this end, we first choose the basis of currents defined using a Miura transform~\cite{FaLu}, that we denote $\left(\bm w^{(s_i)}\right)_{1\leq i\leq r}$: we refer to~\cite[Subsection B.2]{Cer_VOA} for more details, and from which we borrow the notations. These currents admit an expansion of the form $\bm w^{(s_i)}=\sum_{j=0}^{s_i-1}q^j P_j(\Psi)$, where $q=\gamma+\frac2\gamma$ and $P_j(\Psi)$ is a $(s_i-j)$-multilinear form in the derivatives of $\Psi$. For instance
        \begin{equation}
                 P_0(\Psi)=\sum_{\substack{\mc P\subseteq\{1,\cdots,n\} \\ \norm{\mc P}=s_i}}\prod_{j\in\mc P}\ps{h_j,\partial\Psi}.
        \end{equation}
        For $\tau\neq\Id$, we have $\tau e_k=e_{n-k}$ so $\tau h_i=-h_{n+1-i}$ where $h_i=\omega_1-\sum_{k=1}^{i-1}e_k$ (with the notations from~\cite[Subsection B.2]{Cer_VOA}). This implies that $P_0(\Psi^*)=(-1)^{s_i}P_0(\Psi)$. 
        Now by contradiction assume that $s_i$ is odd and that $\Wb^{(s_i)}[\Psi^*]=+\Wb^{(s_i)}[\Psi]$. Then the equalities $P_0[\Psi^*]=-P_0[\Psi]$ and $\tau\Db^{<s_i}_D=-\Db^{<s_i}_D$ imply $P_0[\Psi]+\Db^{<s_i}_{D,0}[\Psi]=0$, where $\left(\Db^{<s_i}_D[\Psi]\right)_{-s_i}\vac=\Db^{<s_i}_D$ and $\Db^{<s_i}_{D,0}[\Psi]$ is the $s_i$-multilinear form in the expansion of $\Db^{<s_i}_D[\Psi]$. But by construction, in the $\sl_n$ case, $\Db^{<s_i}_{D,0}[\Psi]$ is of the form
        \[
            \Db^{<s_i}_{D,0}[\Psi]=\sum_{\bm p}c_{\bm p}\prod_{j=1}^{i-1}\left(\sum_{\substack{\mc P\subseteq\{1,\cdots,n\} \\ \norm{\mc P}=s_j}}\prod_{l\in\mc P}\ps{h_l,\partial\Psi}\right)^{p_j}
        \]
        where the sum ranges over $\bm p=(p_1,\cdots,p_{j-1})\in\N^r$ such that $\sum_{j=1}^{j-1}p_js_j=s_i$, and $c_{\bm p}$ are constants. The latter being linearly independent (for instance by viewing them as polynomials in the variables $X_i=\ps{h_i,\partial\Psi}$) from $P_0$, we cannot have $P_0[\Psi]+\Db^{<s_i}_{D,0}[\Psi]=0$. Hence we arrive to a contradiction: odd spin implies $\lambda=-1$. The same argument shows that if the spin is even we must have $\lambda=1$, and this concludes the proof. 
        We believe that a similar argument holds in the $\mathfrak{so}_{2n}$ case using the Miura transform~\cite{FaLuDn}.
	\end{proof}
	
	\subsubsection{Ward identities}
	An explicit manifestation of this symmetry is the existence of \textit{Ward identities}, that should arise when the $W$-currents are inserted within correlation functions. When $\Sigma=\S^2$, such a statement has been proved in~\cite{Toda_OPEWV} for $\g=\sl_3$ and for general $\g$ in~\cite{Cer_VOA} when a so-called \textit{neutrality condition} is assumed to hold. In the boundary case, with Neumann boundary conditions and with $\g=\sl_3$, such a statement was proved in~\cite{CH_sym1, CH_sym2}, revealing features unknown in the physics literature. We hope to explore in more depth general cases where the Cardy case is considered instead of the Neumann one, and when the underlying Lie algebra is not necessarily $\sl_3$. 
	\appendix
\section{Proofs of the main statements} \label{sec:appendix_proof} 
We gather here the proofs of the statements defining the probabilistic path integral and the correlation functions. The case of a closed Riemann surface being treated in the exact same fashion as the open one we will focus on the latter. Likewise the partition function being a special case of the correlation functions our goal is to prove Theorems~\ref{thm:existence} and~\ref{thm:axioms}. Hereafter we denote $\bm x=(x_1,\cdots,x_N,s_1,\cdots,s_M)$ and $\bm \alpha=(\alpha_1,\cdots,\alpha_N,\frac12\beta_1,\cdots,\frac12\beta_M)$.

A key ingredient is the following formulation of the Girsanov (or Cameron-Martin) theorem:
    \begin{thm} \label{thm:girsanov}
    	Let $(\Sigma,g)$ be a Riemannian surface, $(X(x))_{x\in \Sigma} = (X_1(x),...,X_n(x))_{x\in \Sigma}$ be a family of smooth centered Gaussian fields on $\Sigma$ and $Y$ be a random variable in the $L^2$-closure of ${\rm Span}(X(x))_{x\in \Sigma}$. For any $F$ bounded over the space of continuous functions, 
    	$$
    	\E\left[e^{Y-\frac12 \E[Y^2]}F(X(x))_{x\in \Sigma}\right] = \E\left[F(X(x) + \E[YX(x)])_{x\in \Sigma}\right].
    	$$
    \end{thm}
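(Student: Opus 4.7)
The plan is to reduce to the classical finite-dimensional Cameron--Martin formula through a two-step approximation: first in the variable $Y$, then in the functional $F$. As a first step I would treat the case where $Y=\sum_{j=1}^k c_j X_{i_j}(x_j)$ is a finite linear combination of components of $X$ at finitely many points and where $F$ is cylindrical, i.e.\ $F(X)=\tilde F(X(y_1),\ldots,X(y_m))$ for some bounded Borel $\tilde F$ on $\R^{mn}$. Then the vector $(Y,X(y_1),\ldots,X(y_m))$ is a centered Gaussian, and the identity reduces to a direct computation: multiplying the joint density by $e^{y-\frac12 \E[Y^2]}$ and completing the square in $y$ produces the joint density of $(Y,X(y_1)+\E[Y X(y_1)],\ldots,X(y_m)+\E[YX(y_m)])$ on $\R^{1+mn}$. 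Integrating out the $y$-marginal yields the right-hand side evaluated on the shifted field.

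Next I would extend to a general $Y$ in the $L^2$-closure of $\mathrm{Span}(X(x))_{x\in\Sigma}$ by picking a sequence $Y_k$ of finite linear combinations with $Y_k\to Y$ in $L^2$. Cauchy--Schwarz gives
$$
\sup_{x\in\Sigma}\bigl|\E[(Y-Y_k)X(x)]\bigr|\leq \sqrt{\E[(Y-Y_k)^2]\,\sup_{x\in\Sigma}\E[|X(x)|^2]},
$$
so the shift functions $x\mapsto \E[Y_k X(x)]$ converge uniformly on $\Sigma$ to $x\mapsto\E[Y X(x)]$, which is itself continuous by smoothness of the covariance and compactness of $\Sigma$. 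Since $F$ is bounded and continuous on the space of continuous functions on $\Sigma$, dominated convergence lets one pass to the limit on the right-hand side. On the left-hand side, $L^2$ convergence in a Gaussian space gives $L^1$ convergence of $e^{Y_k-\frac12\E[Y_k^2]}$ to $e^{Y-\frac12\E[Y^2]}$, and boundedness of $F$ concludes.

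Finally, to extend from cylindrical $F$ to an arbitrary bounded continuous functional on $C(\Sigma,\R^n)$, I would invoke a monotone class argument: the family of bounded Borel functionals for which the identity holds is closed under bounded pointwise limits, and bounded continuous cylindrical functionals generate the Borel $\sigma$-algebra of $C(\Sigma,\R^n)$. The main obstacle is the second step: one must verify that the shift $x\mapsto\E[Y X(x)]$ is a \emph{continuous} function on $\Sigma$, so that the translated field $X(\cdot)+\E[Y X(\cdot)]$ remains in the domain of $F$, and that $\E[Y_k X(\cdot)]\to \E[Y X(\cdot)]$ in the uniform topology rather than only pointwise. Both points rest on the regularity of the joint covariance kernel $(x,y)\mapsto \E[X_i(x)X_j(y)]$, guaranteed by the smoothness assumption on $X$ together with compactness of $\Sigma$.
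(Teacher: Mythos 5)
The paper offers no proof of this statement: Theorem~\ref{thm:girsanov} is recorded in the appendix as a classical formulation of the Cameron--Martin (Girsanov) theorem and used as a black box, so there is nothing in the paper to compare your argument against. Your proposal is the standard proof and is correct in substance: a finite-dimensional Gaussian computation for finite-rank $Y$ and cylindrical $F$, then $L^2$-approximation in $Y$, then a monotone class extension in $F$. Three small points deserve tightening. (i) The joint covariance of $(Y,X(y_1),\dots,X(y_m))$ may be degenerate, so ``multiplying the joint density'' is not always licit; it is safer to verify the identity on $f(z)=e^{i\langle t,z\rangle}$ using $\E[e^{W}]=e^{\frac12\E[W^2]}$ for (complex) linear combinations of jointly Gaussian variables, and then invoke uniqueness of characteristic functions for the two finite measures, both of which have total mass $1$. (ii) The order of the two extensions matters: passing to the limit $Y_k\to Y$ on the right-hand side by dominated convergence uses continuity of the functional, so this step should be carried out for cylindrical $F$ with \emph{continuous} $\tilde F$; this already identifies the finite-dimensional marginals of the two Borel measures on $C(\Sigma,\R^n)$, and only afterwards does the $\pi$-system/monotone class argument yield all bounded Borel $F$. (iii) Your claim that $e^{Y_k-\frac12\E[Y_k^2]}\to e^{Y-\frac12\E[Y^2]}$ in $L^1$ is correct but deserves a word of justification: it follows from convergence in probability together with the fact that all of these nonnegative variables have mean $1$ (Scheff\'e's lemma). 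None of these is a genuine gap.
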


\subsection{Existence of correlation functions (Theorem~\ref{thm:existence})}We start by establishing the Seiberg bounds~\eqref{eq:Seiberg}. In view of the Weyl anomaly formula that we will prove next we can always assume that $g=g_0$ is a uniform metric of type 1. Without loss of generality and since, for $z\in\C$, $\norm{e^z}=e^{\Re(z)}$ we also assume the cosmological constants to be real-valued.

Recall the decomposition~\eqref{eq:decomposition Cardy GFF} of the field $\X^{g,C}$ in terms of $\X^{N,g}$ and $\X^{D,g}$ two independent vectorial Neumann (resp. Dirichlet) GFFs. Then by Girsanov's theorem~\ref{thm:girsanov} for any $F$ bounded continuous over $H^{-1}(\Sigma\to\a,g)$ we have 
\begin{align*}
    &\expect{\prod_{k=1}^NV_{\alpha_k,\eps}(x_k)\prod_{l=1}^MV_{\beta_l,\eps}(s_l)F\left(\X^{g,C}_\eps+\c\right)}=C_{\eps,\bm{x},\bm{\alpha}}\expect{F\left(\X^{g,C}_\eps+\c+H_{\eps,\bm x,\bm\alpha}\right)}\qt{with}\\
    &C_{\eps,\bm x,\bm\alpha}=\prod_{k=1}^{N}e^{\frac{\norm{\alpha_k}^2}{2}W_{g,\eps}(x_k)}\prod_{l=1}^{M}e^{\frac{\norm{\beta_l}^2}{4}W_{g,\eps}(x_k)}\prod_{1\leq k<l\leq N+M}e^{\ps{\alpha_k,\alpha_l}G_{g,\eps}(x_k,x_l)}\qt{and}\\
    &H_{\eps,\bm x,\bm\alpha}=\sum_{k=1}^{N+M}\alpha_k G_{g,\eps,\eps}(\cdot,x_k).
\end{align*}
Here $G_{g,\eps}(x,y)=\mu_{g,\eps}[G_g(\cdot,y)](x)$, $G_{g,\eps,\eps}(x,y)=\mu_{g,\eps}[z\mapsto \mu_{g,\eps}[G_g(\cdot,z)](x)](y)$, while $W_{g,\eps}(x_k)=W_{g_0}(x_k)+o(1)$ as $\eps\to0$ (see Equation~\eqref{eq:log divergence diagonal}). Hence $C_{\eps,\bm x,\bm\alpha}=C_{0,\bm x,\bm \alpha}(1+o(1))$. As a consequence the regularized correlation functions are
	\begin{align*}
		&A_\eps = C_{0,\bm{x},\boldsymbol{\alpha}} \int_{\a_N} e^{\ps{\sum \alpha_k + \frac12 \sum \beta_l - Q\chi(\Sigma),\c}} \E\left[e^{-\sum_{i=1}^r \mu_{B,i} e^{\ps{\gamma e_i,\c}} Z_{i,\eps} -  \sum_{j=1}^{d_N}e^{\ps{\frac{\gamma}{2}f_j,\c}} Z_{j,\eps}^{\partial}}\right]\d\c (1+o(1)),\\
		&Z_{i,\eps} = \int_{\Sigma} \eps^{\frac{\gamma_i^2}{4}} e^{\ps{\gamma e_i, \X^{C,g}_\eps(x)+H_{\eps,\bm x,\bm\alpha}(x)}} \d v_g(x),\quad Z_{j,\eps}^{\partial} = \int_{\partial\Sigma}  \eps^{\frac{\norm{\gamma f_j}^2}{2}} e^{\ps{\frac{\gamma}{2}f_j, X_\eps^{N,g}(s)+H_{\eps,\bm x,\bm\alpha}(s)}} \mu_j(\d\lambda_g)(s).
    \end{align*}
    To see that the Seiberg bounds are necessary conditions, we bound for any positive $M>0$ 
    \begin{align*}
		A_\eps \ge C_{\eps,\bm{x},\boldsymbol{\alpha}} \int_{\a_N} e^{\ps{\sum \alpha_k + \frac12 \sum \beta_l - Q\chi(\Sigma),\c}}& e^{-\sum_{i=1}^r \left(\mu_{B,i} e^{\ps{\gamma e_i,\c}} + e^{\ps{\frac{\gamma}{2}f_j,\c}} \right)M}\\
        &\times \P(\forall i,j \ Z_{i,\eps}\leq M\text{ and } Z_{j,\eps}^{\partial} \leq M) \d\c (1+o(1)).
	\end{align*}
    The latter probability being non-zero we see that $A_\eps$ is infinite as soon as there is a $i\in\{1,\cdots,r\}$ such that $\ps{\sum \alpha_k + \frac12 \sum \beta_l - Q\chi(\Sigma),\omega_i^\vee} \le 0$: the first condition of the Seiberg bounds must be satisfied. Besides, if f $\ps{\alpha_k-Q,e_i}\geq 0$ for some $1\leq k\leq N$ and $1\leq i\leq r$ we have that for $\eps$ small enough and for any $s<0$, $\int_{B(z_k,\eps)}\norm{x-z_k}^{-\ps{\alpha_k,\gamma e_i}}e^{\ps{\gamma e_i,\X^{g,C}}}\d v_g=+\infty$ almost surely (and likewise for boundary insertions) ; see for instance the paragraph between Equations (3.9) and (3.10) in~\cite{HRV18}. This entails that the second condition in the Seiberg bounds has to be verified too.

    We now assume the conditions~\eqref{eq:Seiberg} to hold and show that they are sufficient for the regularized partition function to converge and to be non-trivial. We actually prove a stronger statement that may be used to extend the Seiberg bounds. Let us introduce the shortcuts $\mu_{i,\eps}^N(x)$ and $\mu_{i,\eps}^D(x)$ for the regularized Neumann and Dirichlet GMC densities, respectively. 

    \begin{lemma}
        Assume that $\ps{\alpha-Q,e_i}<0$. Then for $x\in\Sigma$ and $\rho>0$ such that $B_g(x,2\rho)\cap\partial\Sigma=\emptyset$,
        $$
        \sup\limits_{\eps>0}\E \left[ \left(\int_{B_g(x,\rho)}  e^{\ps{\gamma e_i,\alpha} G_{g,\eps}(x,y)} \mu_{i,\eps}^N(y) \mu_{i,\eps}^D(y) \d v_g(y)\right)^p\right] < +\infty
        $$
        if and only if $p\in(-\infty,\frac{4}{\gamma_i^2}\wedge\frac1\gamma \ps{Q-\alpha,e_i^\vee})$. 
    \end{lemma}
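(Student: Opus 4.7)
The plan is to reduce the statement to a well-known moment problem for a GMC with a power-law singularity, and then to invoke the standard multi-scale / Kahane convexity arguments.

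\textbf{Step 1 (Localization and decoupling of the two GFFs).} Since $B_g(x,2\rho)\cap\partial\Sigma=\emptyset$, Lemma~\ref{lemma:estimates_green_function} combined with Lemma~\ref{lemma:decomposition_green_function} shows that on $B_g(x,\rho)$ both Green's functions $G_g^N$ and $G_g^D$ split as $G_g^{N/D}(y,z)=-\log d_g(y,z)+h^{N/D}(y,z)$ with $h^{N/D}$ continuous. Thus the $e_i$-directional Gaussian field $\ps{e_i,\X^{g,C}}$ restricted to $B_g(x,\rho)$ is an honest log-correlated Gaussian field of variance $\|e_i\|^2\bigl(-\log\eps + O(1)\bigr)$, since $\|p_Ne_i\|^2+\|p_De_i\|^2=\|e_i\|^2$. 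In particular $\mu^N_{i,\eps}(y)\mu^D_{i,\eps}(y)\d v_g(y)$ converges on $B_g(x,\rho)$, as $\eps\to0$, to a sub-critical GMC measure $M_i(\d y)$ of parameter $\gamma_i$ (which is indeed sub-critical because $\gamma_i^2=\gamma^2\|e_i\|^2\leq 2\gamma^2<4$).

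\textbf{Step 2 (Absorbing the singular weight).} By Lemma~\ref{lemma:estimates_green_function} we have $G_{g,\eps}(x,y)=-\log\bigl(d_g(x,y)\vee\eps\bigr)+O(1)$ uniformly on $B_g(x,\rho)$, so
\begin{equation*}
    e^{\ps{\gamma e_i,\alpha}G_{g,\eps}(x,y)}\asymp \bigl(d_g(x,y)\vee\eps\bigr)^{-\gamma\ps{e_i,\alpha}}.
\end{equation*}
Hence, up to universal constants, the integrand is dominated above and below (in $L^p$, $p>0$) by $\bigl(d_g(x,y)\vee\eps\bigr)^{-\gamma\ps{e_i,\alpha}}\mu^N_{i,\eps}(y)\mu^D_{i,\eps}(y)$. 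Kahane's convexity inequality then reduces the statement to the finiteness of $\E\bigl[(\int_{B_g(x,\rho)} d_g(x,y)^{-\gamma\ps{e_i,\alpha}}M_i(\d y))^p\bigr]$, which is the classical moment problem for a sub-critical GMC with one vertex insertion of weight $\gamma\ps{e_i,\alpha}$ (a.k.a. a Seiberg-type bound in Liouville theory).

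\textbf{Step 3 (Negative moments).} For $p<0$, the quantity is finite for all admissible $\alpha$: classical negative-moment bounds for sub-critical GMC measures (see e.g.\ \cite{RV10}) yield $\E[M_i(B_g(x,\rho))^q]<\infty$ for every $q<0$, and the singular weight $d_g(x,y)^{-\gamma\ps{e_i,\alpha}}\geq 1$ near $y=x$ only improves the lower bound, hence makes negative moments smaller.

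\textbf{Step 4 (Positive moments, main obstacle).} This is the crux of the argument. Decompose the ball $B_g(x,\rho)$ into dyadic annuli $A_k=\{y:2^{-k-1}\rho\leq d_g(x,y)\leq 2^{-k}\rho\}$, $k\geq 0$. On $A_k$, the weight is of order $(2^{-k}\rho)^{-\gamma\ps{e_i,\alpha}}$, and by the exact-scaling heuristic for GMC (which one makes rigorous by comparison with Kahane $\star$-scale invariant measures, using once again Kahane's convexity inequality) one has
\begin{equation*}
    M_i(A_k)\overset{(\mathrm{law})}{\approx}(2^{-k})^{2+\gamma_i^2/2}\,e^{\gamma_i\Omega_k}\,\widetilde M_i(A_0),
\end{equation*}
where $\Omega_k$ is a centered Gaussian of variance $k\log 2$ independent of $\widetilde M_i(A_0)\stackrel{d}{=}M_i(A_0)$. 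Using Minkowski for $p\geq 1$ (or subadditivity for $p\leq 1$) together with the Gaussian moment generating function gives
\begin{equation*}
    \E\Bigl[\Bigl(\int_{A_k} d_g(x,y)^{-\gamma\ps{e_i,\alpha}} M_i(\d y)\Bigr)^p\Bigr]^{1/p}\lesssim (2^{-k})^{2-\gamma\ps{e_i,\alpha}+\gamma_i^2/2-p\gamma_i^2/2}\,\E[M_i(A_0)^p]^{1/p},
\end{equation*}
so summing over $k$ converges if and only if the exponent is positive, i.e.\ $(p-1)\gamma_i^2/2<2-\gamma\ps{e_i,\alpha}$, which is exactly $p<\frac1\gamma\ps{Q-\alpha,e_i^\vee}$ after using $\ps{Q,e_i}=\gamma\|e_i\|^2/2+2/\gamma$. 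One still needs $\E[M_i(A_0)^p]<\infty$, for which the sharp threshold is the classical one $p<4/\gamma_i^2$. Combining both conditions yields the announced range.

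\textbf{Step 5 (Sharpness and uniformity in $\eps$).} Finiteness at the threshold or above is ruled out by comparing $M_i$ from below with an exact $\star$-scale invariant measure via Kahane's inequality and repeating the dyadic computation in reverse; uniformity of the bound in $\eps$ is automatic because the regularized densities $\mu_{i,\eps}^{N/D}$ converge to their limits increasingly in $\eps$ for the appropriate ordering and are dominated in $L^p$ by the limit through Kahane's convexity inequality applied with the fields $\X^{g,C}_\eps$ and $\X^{g,C}$ (which have comparable covariances up to $O(1)$ corrections by Step 1). The main subtlety is to handle the interaction between the diagonal $\eps$-regularization of the vertex weight $G_{g,\eps}(x,y)$ and the regularization of the field: both saturate at the same scale $\eps$, which is exactly what makes the multi-scale comparison independent of $\eps$.
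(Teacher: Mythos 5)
Your argument is correct in outline, but it is considerably longer than the paper's, which disposes of this lemma in one line: after a Kahane comparison it simply cites \cite{DKRV16}[Lemma A.1]. The shared core is your Steps 1--2: since $B_g(x,2\rho)$ avoids $\partial\Sigma$, the image-charge term $G_{\hat g}(\cdot,\sigma(\cdot))$ is bounded there, so the $e_i$-component of the Cardy field is an honest log-correlated field of parameter $\gamma_i$ (using $\norm{p_Ne_i}^2+\norm{p_De_i}^2=\norm{e_i}^2$ and independence of the Neumann and Dirichlet parts), and Kahane's convexity inequality transfers the moment problem to a standard sub-critical GMC against the insertion weight $d_g(x,y)^{-\gamma\ps{e_i,\alpha}}$. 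At that point the paper stops and invokes the known sharp moment bound for GMC with one vertex insertion; your Steps 3--5 re-derive that cited result from scratch via dyadic annuli, exact scaling and the Gaussian moment generating function, and your bookkeeping does recover the threshold $p<\frac1\gamma\ps{Q-\alpha,e_i^\vee}$ upon substituting $\ps{Q,e_i}=\gamma\norm{e_i}^2/2+2/\gamma$, together with the classical cap $p<4/\gamma_i^2$. What your route buys is self-containedness; what it costs is that several standard but nontrivial points, which the citation absorbs, must be spelled out to make the sketch airtight: (i) for $p\in(0,1)$ the map $t\mapsto t^p$ is concave, so the Kahane comparison reverses direction and the annulus estimates must go through subadditivity rather than Minkowski; (ii) the exact-scaling relation is only approximate on a curved surface and has to be made rigorous by a second comparison with $\star$-scale invariant kernels, as you note; (iii) your Step 3 claim that the weight is bounded below by $1$ near $y=x$ fails when $\ps{e_i,\alpha}<0$ (the weight then vanishes at $x$), although negative moments remain finite by restricting the integral to an outer annulus where the weight is bounded away from $0$. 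None of these is a genuine gap, but if the proof is meant to stand alone rather than reduce to the known lemma, they need to be addressed explicitly.
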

    \begin{proof}
        By Kahane's convexity inequality this result follows from~\cite{DKRV16}[Lemma A.1]; see the proof of the next lemma.
    \end{proof}

    \begin{lemma}
        Suppose that $\ps{\beta-Q,e_i}<0$ and $s\in\partial\Sigma$. Then for all $1>\delta>0$,
        $$
        \sup\limits_{\eps>0}\E \left[ \left(\int_{B_g(s,1-\delta)\cap\Sigma}  e^{\frac12 \ps{\gamma e_i,\beta} G_{g,\eps}(s,x)} \mu_{i,\eps}^N(x) \mu_{i,\eps}^D(x) \d v_g(x)\right)^p\right] < +\infty
        $$
        if $p\in(-\infty,\frac{2}{\gamma_i^2}\wedge\frac1\gamma \ps{Q-\beta,e_i^\vee})$.
    \end{lemma}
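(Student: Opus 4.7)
The proof parallels that of the previous lemma (which reduces to~\cite[Lemma A.1]{DKRV16}), but the insertion point $s$ now lies on $\partial\Sigma$, which modifies the effective singular behavior of $G_{g,\eps}$ at $s$ and, consequently, the admissible range of $p$. I would first dispose of negative moments $p<0$: the statement follows from the standard fact that a GMC measure of a region of positive Lebesgue measure admits uniformly bounded negative moments in~$\eps$; see~\cite{RV14}. Hence the substance of the lemma lies in the range $0<p<\frac{2}{\gamma_i^2}\wedge \frac{1}{\gamma}\ps{Q-\beta,e_i^\vee}$.

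A preliminary application of Kahane's convexity inequality allows one to replace the kernel $G_{g,\eps}$ by an explicit log-correlated kernel on a half-disc in $\H$ whose essential features replicate those of the true kernel near $s$. Two features must be preserved: (i) the boundary-doubling of the Neumann Green's function at $s$ --- since $\sigma(s)=s$, Lemma~\ref{lemma:decomposition_green_function} gives $G_g^N(s,\cdot)=2G_{\hat g}(s,\cdot)$ while $G_g^D(s,\cdot)=0$, so that the weight $e^{\frac12\ps{\gamma e_i,\beta}G_{g,\eps}(s,x)}$ contributes an effective power $|x-s|^{-\ps{\gamma e_i,\beta}}$ as $x\to s$; and (ii) the enhanced variance of the Neumann GFF near $\partial\Sigma$ (Lemma~\ref{lemma:regularization GFF boundary case cardy}), which governs the boundary-adjacent scaling dimension of the Neumann GMC.

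Next I would decompose the region dyadically: $B_g(s,1-\delta)\cap\Sigma=\bigsqcup_{n\geq n_0}A_n$ with $A_n=\{x\in\Sigma:2^{-n-1}\leq d_g(s,x)<2^{-n}\}$. Using sub-additivity $(\sum a_n)^p\leq \sum a_n^p$ when $p\leq 1$, or Minkowski when $1<p<\frac{2}{\gamma_i^2}$, the $p$-th moment of the full integral is controlled by $\sum_{n\geq n_0}\E[I_{n,\eps}^p]^{1\wedge 1/p}$ where $I_{n,\eps}$ denotes the integral restricted to $A_n$. Rescaling $A_n$ by $2^n$ and invoking the log-scale invariance of the covariance show that each term is comparable to $2^{-np(\ps{\gamma e_i,Q}-\ps{\gamma e_i,\beta})}\E[M^p]^{1\wedge 1/p}$, where $M$ is the mass of a GMC on a fixed half-annulus touching the boundary. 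The standard moment bound for boundary-adjacent GMC gives $\E[M^p]<\infty$ iff $p<\frac{2}{\gamma_i^2}$, while geometric summability in $n$ requires $\ps{\gamma e_i,Q-\beta}p>0$, that is $p<\frac{1}{\gamma}\ps{Q-\beta,e_i^\vee}$. Intersecting the two constraints yields the lemma.

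The main obstacle is the precise scaling analysis near the boundary point $s$: one must simultaneously track the boundary-doubling of $G^N_g$ at $s$ (which governs the dyadic exponent $\ps{\gamma e_i,Q-\beta}$) and the enhanced near-boundary variance of the Neumann GFF (which produces the threshold $\frac{2}{\gamma_i^2}$ rather than the bulk threshold $\frac{4}{\gamma_i^2}$ of the previous lemma). Exploiting the independence of $\X^{g,N}$ and $\X^{g,D}$ and conditioning on $\X^{g,D}$ first reduces the core analysis to the Neumann field alone, where these boundary effects are most cleanly visible; the Dirichlet contribution can then be reintroduced at the end as a harmless multiplicative factor with finite moments of every order on any compact subset of~$\Sigma$.
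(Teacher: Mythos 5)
Your overall architecture (treat $p<0$ separately, use Kahane's inequality to pass to an explicit half-plane kernel, identify the boundary doubling of $G_g^N$ at $s$ and the resulting threshold $\tfrac{2}{\gamma_i^2}$) is sound, and the paper in fact takes a shortcut here: after a Kahane comparison it simply quotes Lemmas 6.3 and 6.9 of~\cite{HRV18}, which contain precisely the dyadic analysis you attempt to redo. However, your version of that analysis has a genuine error. The dyadic exponent you write, $2^{-np(\ps{\gamma e_i,Q}-\ps{\gamma e_i,\beta})}$, is linear in $p$; it omits the term quadratic in $p$ coming from the moment generating function of the lognormal rescaling factor $e^{\ps{\gamma e_i,\Omega_{2^{-n}}}}$, whose variance is of order $2\gamma_i^2\, n\log 2$ at a boundary point precisely because of the doubling. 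With a linear exponent, geometric summability reduces to $p\,\ps{\gamma e_i,Q-\beta}>0$, which under the standing hypothesis $\ps{\beta-Q,e_i}<0$ holds for \emph{every} $p>0$; your assertion that this condition \lq\lq is $p<\frac{1}{\gamma}\ps{Q-\beta,e_i^\vee}$'' is a non sequitur, and as written your argument would yield finiteness for all $p\in(0,\tfrac{2}{\gamma_i^2})$ independently of $\beta$, which is false. The bound $\frac1\gamma\ps{Q-\beta,e_i^\vee}$ is the positive root of the \emph{quadratic} multifractal exponent, so the $p^2$ term is exactly where the constraint comes from and cannot be dropped.

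Second, your disposal of the Dirichlet factor does not work. The quantity $\mu_{i,\eps}^N(x)\mu_{i,\eps}^D(x)$ is a pointwise product of densities, i.e.\ the GMC density of the sum field $\ps{\gamma e_i,\X^{g,C}}$; conditioning on $\X^{g,D}$ does not turn it into \lq\lq a harmless multiplicative factor with finite moments of every order'' (the regularized density has no uniformly bounded moments of order $>1$ pointwise as $\eps\to0$, and the limiting Dirichlet GMC measure itself has only finitely many positive moments). The correct, and simpler, step --- the one the paper uses --- is the covariance comparison
\[
\E[\ps{e_i,\X^{g,C}_\eps(x)}\ps{e_i,\X^{g,C}_\eps(y)}]=|e_i|^2G^N_{g,\eps,\eps}(x,y)-2\ps{e_i,p_De_i}G_{\hat g,\eps,\eps}(x,\sigma(y))\le \E[\ps{e_i,\X^{g,N}_\eps(x)}\ps{e_i,\X^{g,N}_\eps(y)}]+C,
\]
valid since $\ps{e_i,p_De_i}\ge0$ and $G_{\hat g}(x,\sigma(y))$ is bounded below on the relevant region, followed by Kahane's convexity inequality with the convex map $t\mapsto t^p$, $p>0$, which dominates the whole expectation by the pure Neumann one. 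Your treatment of $p<0$ is fine in spirit and matches the paper's: lower-bound the integral by its restriction to a positive-measure region away from $s$ and from $\partial\Sigma$, where the weight is bounded below, and use finiteness of negative GMC moments there.
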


    \begin{proof}
        Suppose first that $p>0$. We can get rid of the Dirichlet part of the Cardy GMC measure by using Kahane's convexity inequality (see \cite[Corollary 25]{Huang_moments}). Indeed, a simple computation shows that
        $$
        \E[\ps{e_i,X^{g,C}_{\eps}(x)}\ps{e_i,X^{g,C}_{\eps}(y)}] = |e_i|^2G_{g,\eps,\eps}^N(x,y)-2\ps{e_i,p_D e_i}G_{g,\eps,\eps}(x,\sigma(y))
        $$
        with $\ps{e_i,p_De_i}\ge0$, so that we can compare
        $$
        \E[\ps{e_i,X^{g,C}_{\eps}(x)}\ps{e_i,X^{g,C}_{\eps}(y)}] \le \E[\ps{e_i,X^{g,N}_{\eps}(x)}\ps{e_i,X^{g,N}_{\eps}(y)}] + C
        $$
        for $x,y \in B_g(s,1-\delta)\cup\Sigma$. Since the function $t\mapsto t^p$ is convex for $t\ge0$, Kahane's inequality implies that the expectation in the statement of the lemma can be upper-bounded by (a constant times) the same integral with only Neumann type density. Moreover, thanks to Lemma~\ref{lemma:estimates_green_function}, we can work on the Poincaré disk $(B(0,1-\delta),g_P)$. Thus, the result follows from~\cite[Lemmas 6.3 and 6.9]{HRV18}. \\
        When $p<0$ the situation is simpler since the integral in the statement of the lemma can be lower-bounded by the same integral on $B_g(s,1-\delta)\cap \{x\in\Sigma : d_g(x,\partial\Sigma)>\eta\}$ with $\eta>0$ small, so the situation is the same as in the bulk.
    \end{proof}

\subsection{Weyl anomaly and diffeomorphism invariance (Theorem~\ref{thm:axioms})} 
        First, notice that it is enough to show the result for $g=g_0$ the uniform type 1 metric. Indeed, suppose that $g'=e^\varphi g$ are two metrics as in Theorem~\ref{thm:axioms}, there exist two smooth functions $\psi$ and $\xi$ such that $g'=e^{\psi}g_0$ and $g_0=e^\xi g$, with $\varphi=\psi+\xi$. Then we have using \eqref{eq:conf change curvature}:
        \begin{align*}
            &\int_{\Sigma} (|d_{g_0}\psi|_{g_0}^2 + 2R_{g_0}\psi) \d v_{g_0} + 4\int_{\partial\Sigma} k_{g_0}\psi \d\lambda_{g_0} + \int_{\Sigma} (|d_{g}\xi|_{g}^2 + 2R_{g}\xi) \d v_{g} + 4\int_{\partial\Sigma} k_{g}\xi \d\lambda_{g} \\
            &= \int_{\Sigma} (|d_{g}\varphi|_{g}^2 + 2R_{g}\varphi - 2\ps{d_g\psi,d_g\xi}_g - 2\psi\Delta_g\xi) \d v_{g} + \int_{\partial\Sigma} (4k_g\varphi + 2\psi \partial_{\vec{n}_g} \xi )\d\lambda_g\\
            &= \int_{\Sigma} (|d_{g}\varphi|_{g}^2 + 2R_{g}\varphi) \d v_{g}+ \int_{\partial\Sigma} 4k_g\varphi\d\lambda_g
        \end{align*}
        by integration by parts. This shows that we can restrict ourselves to the case of $g=e^\varphi g_0$ with $g_0$ the uniform type 1 metric. We choose to present the proof in the most general case where the field under consideration is the Cardy GFF $\X^{g,C}$. Remember that in this case the $\mu_j$'s associated with the short roots are set to $0$. One naturally recovers the Neumann case by setting $d_N=r$. We omit the $\eps$-regularization in the following since the convergence of the correlation functions for a general smooth metric $g$ is justified by the computations to come. We first use Lemmas~\ref{lemma:change of metric GFF} and \ref{lemma:conformal change GMC cardy} and make a change of variable $\c \to \c + m_g(\X^{g_0,C})$ (which is possible since the Dirichlet part of $m_g(\X^{g_0,C})$ is $0$, so that $m_g(\X^{g_0,C})$ lives in $\a_N$) and apply the Gauss-Bonnet theorem to restrict ourselves to
    	\begin{align} \label{eq: proof_weyl part funct}
            &\int_{\a_N} e^{-\ps{Q,\c}\chi(\Sigma)} \E\left[ F(\X^{g_0,C} + \c) \exp\left( -\frac{1}{4\pi} \int_{\Sigma} R_{g}\ps{Q,\X^{g_0,C}} \d v_{g} -\frac{1}{2\pi} \int_{\partial\Sigma} k_{g}\ps{Q,\X^{g_0,C}} \d \lambda_{g} \right.\right.\nonumber\\
            &\left.\left.- \sum_{i=1}^r \mu_{B,i} e^{\ps{\gamma e_i,\c}} \int_{\Sigma} e^{(1+\frac{\gamma_i^2}{4})\varphi} e^{\ps{\gamma e_i,\X^{g_0,C}}} \d v_{g_0} - \sum_{j=1}^{d_N} \mu_{j} e^{\ps{\frac{\gamma f_j}{2},\c}} \int_{\partial\Sigma} e^{(1+\frac{\gamma^2|f_j|^2}{4})\frac{\varphi}{2}}e^{\ps{\frac{\gamma f_j}{2},\X^{g_0,C}}} \d \lambda_{g_0}\right)\right] \d\c. 
    	\end{align}
    	Now we would like to apply the Girsanov transform to the term $e^Y$, where (using~\eqref{eq:conf change curvature})
        \begin{align*}
            Y&\coloneqq -\frac{1}{4\pi} \int_{\Sigma} R_{g}\ps{Q,\X^{g_0}} \d v_{g}-\frac{1}{2\pi} \int_{\partial\Sigma} k_{g}\ps{Q,\X^{g_0,C}} \d \lambda_{g}\\
            &=\frac1{4\pi} \left( \int_{\Sigma} \Delta_{g_0}\varphi\ps{Q,\X^{g_0,C}} \d v_{g_0}- \int_{\partial\Sigma} \partial_{\vec{n}_{g_0}}\varphi \ps{Q,\X^{g_0,C}} \d \lambda_{g_0}\right).
        \end{align*} 
        Hence we shall compute the variance of $Y$. For this we use the fact that $Q\in\a_N$, together with \eqref{eq:gauss-green formula neumann} and integration by parts to infer that
    	\begin{align*}
    		\E[Y^2] &= \frac{|Q|^2}{16\pi^2} \left( \int_\Sigma \Delta_{g_0} \varphi(x)\left( \int_{\Sigma}\Delta_{g_0} \varphi(y) G^N_{g_0}(x,y) \d v_{g_0}(y) - \int_{\partial\Sigma} \partial_{\vec{n}_{g_0}}\varphi(y)G^N_{g_0}(x,y)\d\lambda_{g_0}(y)\right)\d v_{g_0}(x) \right.\\
            &\left. +\int_{\partial\Sigma} \partial_{\vec{n}_{g_0}}\varphi(y)\left( \int_{\partial\Sigma}\partial_{\vec{n}_{g_0}}\varphi(x)G^N_{g_0}(x,y) \d \lambda_{g_0}(x) - \int_{\Sigma} \Delta_{g_0} \varphi(x) G^N_{g_0}(x,y) \d v_{g_0}(x)\right)\d \lambda_{g_0}(y)\right)\\
            &= \frac{|Q|^2}{8\pi} \left(m_{g_0}\varphi \left(\int_\Sigma \Delta_{g_0}\varphi \d v_{g_0}-\int_{\partial\Sigma} \partial_{\vec{n}_{g_0}}\varphi \d\lambda_{g_0}\right) + \int_\Sigma \varphi\Delta_{g_0}\varphi \d v_{g_0}-\int_{\partial\Sigma} \varphi\partial_{\vec{n}_{g_0}}\varphi \d\lambda_{g_0} \right)\\
            &= \frac{|Q|^2}{8\pi} \int_{\Sigma} |d_{g_0}\varphi|_{g_0}^2 \d v_{g_0}.
    	\end{align*}
    	In the same way, we obtain 
        \begin{align*}
            \E[Y\ps{u,\X^{g_0,C}}] &= \frac{\ps{Q,p_Nu}}{4\pi} \left(\int_\Sigma \Delta_{g_0}\varphi(x) G_{g_0}^N(x,\cdot)\d v_{g_0}(x) - \int_{\partial\Sigma} \partial_{\vec{n}_{g_0}}\varphi(x) G_{g_0}^N(x,\cdot)\d\lambda_{g_0}(x) \right)\\
            &= -\frac{\ps{Q,p_N u}}{4\pi}2\pi(\varphi-m_{g_0}(\varphi))= -\frac{\ps{Q,u}}{2}(\varphi-m_{g_0}(\varphi))
        \end{align*}
    	for any $u \in \a$. Notice that the above Girsanov shift coincides with the conformal anomaly of the GMC measures in \eqref{eq: proof_weyl part funct}, since the short roots are not taken into account in the boundary potential. Hence applying Girsanov theorem allows us to rewrite \eqref{eq: proof_weyl part funct} as
    	\begin{equation*}
    		\begin{split}
    			&\int_{\a_N} e^{\frac{|Q|^2}{16\pi}\int_{\Sigma} |d_{g_0}\varphi|_{g_0}^2 \d v_{g_0} - \ps{Q,\c}\chi(\Sigma)} \E\left[F(\c+\X^{g_0,C} - \frac{Q}{2} (\varphi - m_{g_0}(\varphi))) \right. 
    			\\&\left.\times\exp\left(-\sum_{i=1}^r \mu_{B,i} e^{\ps{\gamma e_i,\c+\frac{Q}{2}m_{g_0}(\varphi)}} \int_{\Sigma} e^{\ps{\gamma e_i,\X^{g_0,C}}} \d v_{g_0}-\sum_{i=1}^{d_N} \mu_{i} e^{\ps{\frac{\gamma f_j}2,\c+\frac{Q}{2}m_{g_0}(\varphi)}} \int_{\partial\Sigma} e^{\ps{\frac{\gamma f_j}2,\X^{g_0,C}}} \d \lambda_{g_0}\right)\right] \d\c.
    		\end{split}
    	\end{equation*}
    	Now we make the change of variables in the zero-mode $\c \to \c -\frac Q2 m_{g_0}(\varphi)$ (which is possible since $Q\in\a_N$) to obtain that $\eqref{eq: proof_weyl part funct}$ equals
    	\begin{equation*}
    		e^{\frac{|Q|^2}{16\pi}\int_{\Sigma} |d_{g_0}\varphi|_{g_0}^2\d v_{g_0} + \frac{|Q|^2}{2}\chi(\Sigma)m_{g_0}(\varphi)} \mc{Z}_C(\Sigma,g_0)^{-1} \ps{F(\cdot-\frac{Q}2\varphi)}_{g_0}. 
    	\end{equation*}
    	We conclude with the Gauss-Bonnet theorem (Equation~\eqref{eq:gauss-bonnet}) and Proposition~\ref{prop:conformal anomaly boundary}. Finally, the fact that the partition function is diffeomorphism invariant directly follows from the fact that all the objects involved are so. Indeed, if $\psi : \Sigma \to \Sigma$ is an orientation-preserving diffeomorphism preserving $\partial\Sigma$, then for any smooth Riemannian metric $g$ we have $R_{\psi^*g} = R_g\circ\psi$ and $k_{\psi^*g} = k_g\circ\psi$, while $\X^{\psi^*g,C} \eqlaw \X^{g,C}\circ\psi$ since we have the following diffeomorphism invariance satisfied by the Green's functions:
        $$
        G_{\psi^*g}^N(x,y) = G_g^N(\psi(x),\psi(y)) \quad G_{\psi^*g}^D(x,y) = G_g^D(\psi(x),\psi(y)).
        $$
	
	\bibliography{main.bib}
	\bibliographystyle{plain}
	
\end{document}